
\documentclass[11pt]{amsart}

\usepackage{graphicx,epsfig}
\usepackage[dvipsnames]{xcolor}
\usepackage{amsfonts,amsmath,amssymb,amsthm}
\usepackage[normalem]{ulem}
\usepackage{comment,constants}
\usepackage{hyperref}
\usepackage{mathtools}
\usepackage{multirow}

\pdfminorversion=5

\setlength{\textwidth}{6.8in}
\setlength{\evensidemargin}{0in}
\setlength{\oddsidemargin}{-0.1in}




\newtheorem{thm}{Theorem}[section]
\newtheorem{prop}[thm]{Proposition}
\newtheorem{lem}[thm]{Lemma}
\newtheorem{cor}[thm]{Corollary}
\newtheorem{conj}[thm]{Conjecture}
\newtheorem*{asm*}{Assumptions}
\newtheorem{asm}{Assumption}

\theoremstyle{remark}
\newtheorem{rem}[thm]{Remark}
\newtheorem*{rem*}{Remark}

\theoremstyle{definition}

\newcommand{\ra}{\rightarrow}

\newcommand{\R}{\mathbb R}     
\newcommand{\Z}{\mathbb Z}     

\renewcommand{\a}{\alpha}

\renewcommand{\d}{\delta}

\newcommand{\e}{\varepsilon}

\newcommand{\s}{\sigma}

\renewcommand{\epsilon}{\varepsilon}
\renewcommand{\k}{\kappa}
\renewcommand{\t}{\tau}
\renewcommand{\r}{\rho}





\newcommand{\bigo}{\mathcal{O}}

\newcommand{\fl}[1]{\lfloor #1 \rfloor}  
\newcommand{\ind}[1]{ \mathbf{1}_{ \{ #1 \} } } 

\newconstantfamily{c}{symbol=c}
\newconstantfamily{CU}{symbol=C_U}


\newcommand{\w}{\omega}              
\renewcommand{\P}{\mathbb{P}}        
\newcommand{\E}{\mathbb{E}}          
\newcommand{\vp}{\mathrm{v}_0}       
 
\DeclareMathOperator{\Var}{Var}
\DeclareMathOperator{\Cov}{Cov}


\title[Optimal quenched CLT rates of convergence]
{Optimal rates of convergence in quenched central limit theorem for hitting times of random walks in random environments}

\author{Sung Won Ahn}
\address{Sung Won Ahn\\Roosevelt University\\Department of Mathematics and Actuarial Science\\430 S. Michigan Ave.\\Chicago, IL 60605\\USA} 
\email{sahn02@roosevelt.edu}

\author{Jonathon Peterson}
\address{Jonathon Peterson\\Purdue University\\Department of Mathematics\\150 N University Street\\West Lafayette, IN  47907\\USA}
\email{peterson@purdue.edu}
\urladdr{http://www.math.purdue.edu/~peterson}
\thanks{J. Peterson was partially supported by the Simons Foundation (Award \#635064).}


\subjclass[2010]{Primary 60K37; Secondary 60F05}
\keywords{quenched central limit theorem, rates of convergence}

\begin{document}

\begin{abstract}
We consider the rates of convergence of the quenched central limit theorem for hitting times of one-dimensional random walks in a random environment. 
Previous results had identified polynomial upper bounds for the rates of decay which are sometimes slower than $n^{-1/2}$ (the optimal rate in the classical Berry-Esseen estimates). Here we prove that the previous upper bounds are in fact the best possible polynomial rates for the quenched CLT. 
\end{abstract}

\maketitle

\section{Introduction}

Random walks in random environments (RWRE) are a simple model for random motion in a non-homogeneous medium. Despite the simplicity of the model, RWRE can exhibit a rich array of limiting behaviors not seen in the classical model of simple random walks. 
In particular, the limiting distributions of RWRE can in some cases be non-Gaussian and/or the scaling can be non-diffusive \cite{kksStable,sRecurrent}. 
However, in certain cases one does have central limit theorem (CLT) like behavior; that is Gaussian limiting distributions with diffusive scaling \cite{kksStable,lBCLT,sSlowdown,bszCPDRW,gQCLT,pThesis,bzQCLT,rsQCLT}. 
Recently there has been interest in studying the rate of convergence of these limit theorems by obtaining quantitative bounds analagous to the Berry-Esseen bounds for the classical CLT \cite{mQCLTRWRC,apQCLTrates,gpBERP,anBEQH}.
These results have all focused on obtaining quantitative \emph{upper bounds} on the error in the CLT. 
When these upper bounds are $\bigo(n^{-1/2})$ (like the Berry-Esseen bounds for the classical CLT) it is easy to see they are asymptotically optimal in the sense that one can also obtain a lower bound of the form $c n^{-1/2}$ for some $c>0$. If the bounds are larger than $n^{-1/2}$, however, then the question naturally arises as to whether or not they are optimal. 

In the present paper we consider the question of optimal rates of convergence for the quenched CLT of hitting times for one-dimensional RWRE. 
In particular, the results in this paper will slightly sharpen the upper bounds of the quantitative quenched CLT obtained in \cite{apQCLTrates} and will also provide lower bounds of the same polynomial order. 

\subsection{One-dimensional RWRE and previous results}

The model of one-dimensional (nearest neighbor) RWRE can be described as follows. 
An \emph{environment} is an element $\w = (\w_x)_{x \in \Z} \in (0,1)^\Z =: \Omega$. 
Given an environment $\w$ one can then define a random walk in the environment $\w$ to be a Markov chain $\{X_n\}_{n\geq 0}$ which when it is at site $x\in \Z$ steps to the right with probability $\w_x$ or to the left with probability $1-\w_x$. More precisely, for an environment $\w$ and a fixed starting point $z \in \Z$, we let $P_\w^z(\cdot)$ be the law of the Markov chain $\{X_n\}_{n\geq 0}$ given by 
\[
 P_\w^z(X_0 = z) = 1
\quad 
\text{and} 
\quad 
 P_\w^z\left(X_{n+1} = y \, | \, X_n = x \right) 
= \begin{cases}
   \w_x & \text{if } y=x+1 \\
   1-\w_x & \text{if } y=x-1 \\
   0 & \text{otherwise.}
  \end{cases}
\]
The distribution $P_\w^z$ is called the \emph{quenched} law of the RWRE and corresponding quenched expectations are denoted $E_\w^z[\cdot]$. 
To give some additional structure to the model, we assume that the environment $\w$ is chosen randomly according to some measure $P$ on the set of environments $\Omega$. 
One can then define the annealed (or averaged) law $\P^z$ of the RWRE by averaging the quenched law over the space of environments, 
\[
 \P^z(\cdot) = E\left[ P_\w^z(\cdot) \right]. 
\]
Annealed expectations are denoted $\E^z[\cdot]$. 
Since we will often be concerned with the random walk started at $z=0$, in this case for convenience of notation we will drop the superscript $z$ from the notation and write $P_\w, E_\w, \P$, and $\E$ instead. 


Generally one may assume that the measure on environments is ergodic, in the present paper we will make the common assumption that the environment is i.i.d.
\begin{asm}\label{asm:iid}
 The distribution $P$ on environments is such that $\w = (\w_x )_{x\in \Z}$ is a sequence of i.i.d.\ random variables.  
\end{asm}
To describe the conditions under which a quenched CLT holds for the RWRE we need some additional notation. 
Let 
\[
 \rho_x = \frac{1-\w_x}{\w_x}, \quad x \in \Z. 
\]
The statistics of the random variable $\rho_0$ can be used to characterize many of the asymptotic behaviors of the RWRE. 
Our main assumption is the following. 
\begin{asm}\label{asm:CLT}
 $E[\log \rho_0] < 0$ and
 $E[\rho_0^2] < 1$. 
\end{asm}
We remark briefly on the importance of these assumptions. The assumption that $E[\log \rho_0] < 0$ implies that the RWRE is transient to the right \cite{sRWRE}, and for transient RWRE the assumption that $E[\rho_0^2]<1$ is enough to imply both quenched and annealed CLTs (though in the present paper we will only be interested in the quenched CLT). 
To state these results we first introduce notation for the hitting times of the random walk 
\[
 T_x = \inf\{k\geq 0:\, X_k = x \}, \qquad x \in \Z. 
\]
\begin{thm}[\cite{aRWRE,gQCLT,pThesis}]\label{th:QCLT}
 Let Assumptions \ref{asm:iid} and \ref{asm:CLT} hold. Then the following quenched CLTs hold for the hitting times and the position of the RWRE.  
\[
 \lim_{n\to\infty} P_\w\left( \frac{T_n - E_\w[T_n]}{\sqrt{\Var_\w(T_n)}} \leq x \right) = \int_{-\infty}^x \frac{1}{\sqrt{2\pi}} e^{-z^2/2} \, dz =: \Phi(x), \quad \forall x \in \R,
\]
and 
\[
 \lim_{n\to\infty} P_\w\left( \frac{X_n - n\vp + Z_n(\w)}{\s \vp^{3/2} \sqrt{n} } \leq x \right) = \Phi(x), \quad \forall x \in \R,
\]
where $\vp = 1/\E[T_1]$, $\s^2 = E[\Var_\w(T_1)]$, and $Z_n(\w) = \vp \left( E_\w[T_{\fl{n\vp}}] - \frac{\fl{n\vp}}{\vp} \right)$. 
\end{thm}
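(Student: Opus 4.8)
The plan is to prove the two quenched CLTs separately: the one for hitting times via the Lindeberg--Feller theorem applied to a sum of independent edge-crossing times, and then the one for the position of the walk by inverting the hitting-time statement.

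\emph{Hitting times.} Write $T_n=\sum_{i=1}^n\tau_i$, where $\tau_i:=T_i-T_{i-1}$ is the time to cross from $i-1$ to $i$, reckoned from the first visit to $i-1$. By the strong Markov property at the times $T_0<T_1<T_2<\cdots$, the increments $\{\tau_i\}_{i\ge1}$ are independent under $P_\w$, and each $\tau_i$ --- together with $E_\w[\tau_i]$ and $\Var_\w(\tau_i)$ --- is a measurable function of $(\w_x)_{x\le i-1}$; since the environment is \iid (Assumption~\ref{asm:iid}), the sequences $(E_\w[\tau_i])_{i\ge1}$ and $(\Var_\w(\tau_i))_{i\ge1}$ are stationary and ergodic. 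The classical series representations for quenched hitting-time moments (Kesten--Kozlov--Spitzer) give $E[E_\w[\tau_1]]=\E[T_1]=1/\vp$ and $E[E_\w[\tau_1^2]]=\E[T_1^2]<\infty$ under Assumption~\ref{asm:CLT}: the first moment is finite because $E[\rho_0]\le E[\rho_0^2]^{1/2}<1$, the second because $E[\rho_0^2]<1$. In particular $\s^2:=E[\Var_\w(\tau_1)]\in(0,\infty)$, so Birkhoff's theorem yields $\Var_\w(T_n)=\sum_{i\le n}\Var_\w(\tau_i)\sim\s^2 n$ for $P$-a.e.\ $\w$, and in particular $\Var_\w(T_n)\to\infty$. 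To check Lindeberg's condition, fix $K>0$ and set $h_{K,i}(\w):=E_\w\big[(\tau_i-E_\w[\tau_i])^2\,\ind{|\tau_i-E_\w[\tau_i]|>K}\big]$, again a stationary ergodic sequence, dominated by $\Var_\w(\tau_i)$; then $\tfrac1n\sum_{i\le n}h_{K,i}(\w)\to E[h_{K,1}]$ and $E[h_{K,1}]\downarrow0$ as $K\to\infty$ by dominated convergence. Since $\Var_\w(T_n)\to\infty$, for fixed $\e$ and $K$ we have $\e\sqrt{\Var_\w(T_n)}>K$ eventually, so the Lindeberg sum is eventually at most $\Var_\w(T_n)^{-1}\sum_{i\le n}h_{K,i}(\w)\to\s^{-2}E[h_{K,1}]$; letting $K\to\infty$ makes it vanish, and Lindeberg--Feller gives the first CLT for $P$-a.e.\ $\w$.

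\emph{Position.} Let $M_n:=\max_{0\le k\le n}X_k$, so $\{M_n\ge m\}=\{T_m\le n\}$ for a nearest-neighbor walk. I would first show $M_n-X_n=o(\sqrt n)$ in $P_\w$-probability for $P$-a.e.\ $\w$: since the walk is transient to the right ($E[\log\rho_0]<0$), the probability of ever backtracking $\ell$ steps below a given level decays exponentially in $\ell$, uniformly over the $O(n)$ relevant levels, so a union bound plus Borel--Cantelli gives $M_n-X_n=O(\log n)$ in probability; hence it suffices to prove the stated CLT with $X_n$ replaced by $M_n$. Fix $x\in\R$ and put $m_n:=\lceil n\vp-Z_n(\w)+x\s\vp^{3/2}\sqrt n\,\rceil$; note $m_n\to\infty$ and $m_n\sim n\vp$, since $Z_n(\w)=o(n)$ for $P$-a.e.\ $\w$ (Birkhoff). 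Then, writing $a_n:=E_\w[T_{m_n}]$ and $D_n:=\sqrt{\Var_\w(T_{m_n})}$, we have $P_\w\big((M_n-n\vp+Z_n(\w))/(\s\vp^{3/2}\sqrt n)\le x\big)=P_\w(M_n<m_n)=P_\w(T_{m_n}>n)=1-P_\w\big((T_{m_n}-a_n)/D_n\le(n-a_n)/D_n\big)$. By the hitting-time CLT applied at the levels $m_n$ --- uniformly in the cutoff, since $\Phi$ is continuous --- it remains to check $(n-a_n)/D_n\to-x$ for $P$-a.e.\ $\w$. Now $D_n\sim\s\sqrt{m_n}\sim\s\vp^{1/2}\sqrt n$, and $n-a_n=(n-m_n/\vp)-(a_n-m_n/\vp)$ with $n-m_n/\vp=\vp^{-1}Z_n(\w)-x\s\vp^{1/2}\sqrt n+O(1)$ from the definition of $m_n$, while $a_n-m_n/\vp=\sum_{i\le m_n}(E_\w[\tau_i]-1/\vp)$ differs from $\vp^{-1}Z_n(\w)=\sum_{i\le\fl{n\vp}}(E_\w[\tau_i]-1/\vp)$ only by the increment of this partial-sum process over a window of sublinear length $|m_n-\fl{n\vp}|$, which is $o(\sqrt n)$ by a maximal inequality for the stationary $L^2$ sequence $(E_\w[\tau_i])_i$. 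The two $\vp^{-1}Z_n(\w)$ contributions cancel, so $n-a_n=-x\s\vp^{1/2}\sqrt n+o(\sqrt n)$ and the ratio tends to $-x$, as required. (Alternatively one can follow Goldsheid's route via the environment viewed from the particle.)

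\emph{Main obstacle.} The only inputs that are genuinely specific to the RWRE setting are also where the real work lies: (i) the explicit formulas for $E_\w[T_1]$ and $\Var_\w(T_1)$, and the verification that $E[\rho_0^2]<1$ makes the quenched variance $P$-integrable (and $\s^2>0$); and, more delicately, (ii) the environment-almost-sure control of the several $o(\sqrt n)$ discrepancies above --- the increment of $\sum_{i\le k}(E_\w[\tau_i]-1/\vp)$ over windows of size $\lesssim\sqrt n$ near $\fl{n\vp}$, and the uniform-in-level backtracking estimate --- which both rely on the exponentially fast mixing of the stationary sequences induced by the \iid environment. These fluctuation quantities are precisely the ones whose sharper, quantitative behavior the rest of the paper analyzes.
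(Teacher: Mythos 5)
This theorem is stated as a citation of prior work (\cite{aRWRE,gQCLT,pThesis}) and is not proved in the paper; the paper only sketches the approach (Lindeberg--Feller for $T_n=\sum_i\tau_i$ with $\tau_i$ independent under $P_\w$, then inversion through the running maximum $X_n^*$ via $P_\w(X_n^*<k)=P_\w(T_k>n)$ after showing $X_n^*-X_n$ is negligible). Your proposal follows exactly that route and the core steps check out, so it is essentially the same approach the paper points to; the only place you are slightly cavalier is in the two fluctuation estimates you flag yourself as ``the main obstacle'': the paper quotes the backtracking bound as $(X_n^*-X_n)/(\log n)^2\to 0$ rather than $O(\log n)$ (environment fluctuations in the quenched escape probabilities cost an extra logarithm, though either is amply $o(\sqrt n)$), and the $o(\sqrt n)$ control of $\sum_{i\in(\fl{n\vp},m_n]}(E_\w[\tau_i]-1/\vp)$ over a window of length $\Theta(\sqrt n)$ genuinely needs an increment/LIL-type estimate for the stationary mixing sequence, not just a bare maximal inequality, since the two endpoint partial sums are each only $O(\sqrt{n\log\log n})$ and cancellation is what makes the increment small.
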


\begin{rem}
 A few remarks are in order regarding the centering and scaling terms in Theorem \ref{th:QCLT}. 
\begin{itemize}
 \item The constant $\vp$ is the limiting speed of the random walk; that is $\lim_{n\to\infty} X_n/n = \vp$, $\P$-a.s. The limiting speed is positive and equals $1/\E[T_1]$ in i.i.d.\ environments if and only if $E[\rho_0] < 1$ which in turn follows from the second part of Assumption \ref{asm:CLT}. 
 \item The scaling of the quenched CLT for the hitting times is different in \cite{aRWRE,gQCLT,pThesis} than as stated in Theorem \ref{th:QCLT}. Indeed, since it was shown in those papers that 
\begin{equation}\label{qVarlim}
 \lim_{n\to\infty} \frac{\Var_\w(T_n)}{n} = E[\Var_\w(T_1)] = \s^2, \quad P\text{-a.s.}
\end{equation}
 a quenched CLT for the hitting times also holds with scaling $\s \sqrt{n}$ as well. It was observed in \cite{apQCLTrates}, however, that using the quenched instead of deterministic scaling led to better quantitative error bounds for the quenched CLT. 
 \item While the quenched CLT is true with either a quenched or deterministic scaling, the environment dependent centering terms are necessary for a quenched CLT in i.i.d.\ environments. This can be seen for instance from the fact that $\frac{E_\w[T_n] - n/\vp}{\sqrt{n}}$ converges in distribution to a mean zero Gaussian. 
\end{itemize}
\end{rem}


Quantitative versions of the quenched CLTs in Theorem \ref{th:QCLT} were studied recently in \cite{apQCLTrates}. The upper bounds on the error in the CLT was described in terms of the parameter
\begin{equation}\label{kdef}
 \k = \sup \{ \gamma >0: \, E[\rho_0^\gamma] < 1 \}. 
\end{equation}
Note that the second part of Assumption \ref{asm:CLT} is equivalent to the assumption $\k>2$. 
The parameter $\k>0$ has appeared in a number of previous results for one-dimensional RWRE. For instance, it has been shown to characterize the type of limiting distribution for transient RWRE \cite{kksStable,psWQLTn,psWQLXn,estzWQL,dgWQL} and also determine the subexponential rate of decay of certain large deviation probabilities \cite{dpzTE1D,gzQSETE,fgpMD}.
The following theorem from \cite{apQCLTrates} shows how the parameter $\k$ controls the polynomial rate of convergence of the quenched CLT for hitting times. 
\begin{thm}\label{th:BETn}
Let $F_{n,\w}(x) = P_\w\left(  \frac{T_n - E_\w[T_n]}{\sqrt{\Var_\w(T_n)}} \leq x \right)$ be the normalized quenched distribution of $T_n$. 
 \begin{enumerate}
  \item If $\kappa > 3$, then there exists a constant $C\in(0,\infty)$ such that 
\[
 \limsup_{n\ra\infty}  \sqrt{n} \left\| F_{n,\w} - \Phi \right\|_\infty \leq C, \qquad P\text{-a.s.}
\]
 \item If $\k \in (2,3]$, then for any $\e>0$
\[
  \lim_{n\ra\infty} n^{\frac{3}{2}-\frac{3}{\k}-\e} \left\| F_{n,\w} - \Phi \right\|_\infty  = 0, \qquad P\text{-a.s.}
\]
 \end{enumerate}
\end{thm}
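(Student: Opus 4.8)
\medskip
\noindent\emph{Proof proposal.}
The plan is to deduce both parts from a single application of the classical Berry--Esseen inequality for sums of independent but not necessarily identically distributed random variables. The structural fact that makes this possible is that, \emph{conditionally on the environment}, the hitting-time increments $\tau_k:=T_k-T_{k-1}$, $k\ge1$, are independent under $P_\w$: since $E[\log\rho_0]<0$ the walk is transient to $+\infty$, hence (being nearest neighbour) visits $0,1,2,\dots$ in increasing order, and $\tau_k$ is a functional of the piece of the path between the stopping times $T_{k-1}$ and $T_k$, so independence follows from the strong Markov property applied successively at the $T_{k-1}$. Writing $T_n-E_\w[T_n]=\sum_{k=1}^n(\tau_k-E_\w[\tau_k])$, this is a sum of independent centred summands with total variance $\sum_{k=1}^n\Var_\w(\tau_k)=\Var_\w(T_n)$, and $E_\w[\tau_k^3]<\infty$ for $P$-a.e.\ $\w$ because the quenched moments of an edge-crossing time are convergent series in finite products of the $\rho_x$ over segments to the left of $k$, which decay geometrically $P$-a.s.\ since $E[\log\rho_0]<0$. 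Thus, $P$-a.s.,
\[
 \|F_{n,\w}-\Phi\|_\infty\;\le\;C_0\,\frac{\sum_{k=1}^n W_k}{\big(\Var_\w(T_n)\big)^{3/2}},
 \qquad W_k:=E_\w\!\big[\,|\tau_k-E_\w[\tau_k]|^3\,\big],
\]
with $C_0$ a universal constant. It then remains to bound the denominator from below and the numerator from above.

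For the denominator, \eqref{qVarlim} gives $\Var_\w(T_n)=(\s^2+o(1))\,n$ $P$-a.s., and $\s^2=E[\Var_\w(T_1)]>0$ because $\tau_1$ is non-degenerate under $P_\w$ for every $\w\in(0,1)^\Z$; hence $\big(\Var_\w(T_n)\big)^{3/2}\ge c\,n^{3/2}$ for all large $n$. For the numerator the key input is an integrability bound for $W_0$. Using the standard comparison of quenched moments of edge-crossing times with powers of their quenched means, $E_\w[\tau_0^{\,k}]\le C_k\big(E_\w[\tau_0]\big)^{k}$ --- which reflects that in a trap $\tau_0$ behaves like a geometric variable --- one obtains $W_0\le C\big(E_\w[\tau_0]\big)^{3}$. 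Combining this with the Kesten--Kozlov--Spitzer moment bound $\E[T_1^p]<\infty\iff p<\k$ (which, together with Jensen's inequality, gives $E\big[(E_\w[\tau_0])^{q}\big]<\infty$ for every $q<\k$) yields $E[W_0^{\,s}]<\infty$ for every $s<\k/3$.

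Now the two regimes split. If $\k>3$ then $E[W_0]<\infty$, and since $(W_k)_{k\ge1}$ is a stationary ergodic sequence (a fixed measurable function of the shifted environment, the shift being ergodic as $P$ is a product measure), Birkhoff's theorem gives $\tfrac1n\sum_{k=1}^nW_k\to E[W_0]$ $P$-a.s.; hence the numerator is $\bigo(n)$ and $\limsup_n\sqrt n\,\|F_{n,\w}-\Phi\|_\infty\le C_0\,E[W_0]/\s^{3}$ $P$-a.s., which is part~(i). If $\k\in(2,3]$, fix $\e>0$ and pick $s$ with $\tfrac{\k}{3+\e\k}<s<\tfrac\k3$; then $s\le1$, so $x\mapsto x^{s}$ is subadditive and $E\big[\big(\sum_{k=1}^nW_k\big)^{s}\big]\le n\,E[W_0^{s}]$, whence by Markov's inequality $P\big(\sum_{k=1}^nW_k>n^{3/\k+\e}\big)\le E[W_0^{s}]\,n^{\,1-(3/\k+\e)s}$, an exponent which is strictly negative by the choice of $s$. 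Summing this bound along $n=2^j$, applying Borel--Cantelli, and using the monotonicity of $n\mapsto\sum_{k=1}^nW_k$ to interpolate between consecutive powers of $2$, one gets $\sum_{k=1}^nW_k=\bigo(n^{3/\k+\e})$ $P$-a.s.; therefore $\|F_{n,\w}-\Phi\|_\infty=\bigo(n^{\,3/\k-3/2+\e})$ $P$-a.s., which is part~(ii) after renaming the arbitrarily small $\e$. Note that this last step uses only the annealed moment bound together with subadditivity, so the (infinite-range) dependence among the $W_k$ causes no trouble.

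I expect the heart of the argument to be the numerator estimate --- specifically the integrability $E[W_0^{\,s}]<\infty$ for $s<\k/3$, which is where the exponent $3/\k$ comes from. This rests on two non-trivial ingredients: the quenched moment comparison $E_\w[\tau_0^{\,k}]\lesssim\big(E_\w[\tau_0]\big)^{k}$, and the identification of the tail of $E_\w[\tau_0]$ (through the Kesten--Kozlov--Spitzer moment estimates for $T_1$). Once these are available, everything else --- the quenched independence of the increments, the passage through the Berry--Esseen inequality, and the heavy-tailed law of large numbers for $\sum_k W_k$ in the case $\k\le3$ --- is routine.
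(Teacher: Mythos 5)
Your overall architecture is the right one and is essentially the paper's (and that of \cite{apQCLTrates}, from which Theorem \ref{th:BETn} is quoted): quenched independence of the $\tau_k$, the Berry--Esseen bound of Theorem \ref{BEind} giving exactly the inequality \eqref{bound1}, the linear growth of $\Var_\w(T_n)$ from \eqref{qVarlim}, and then a law-of-large-numbers argument for $\sum_k W_k$ — Birkhoff when $\kappa>3$, and a fractional-moment/subadditivity bound when $\kappa\in(2,3]$. Your Markov--plus--Borel--Cantelli treatment of the case $\kappa\le 3$ is a harmless variant of the paper's Lemma \ref{lem:ergsums}, which gets the same conclusion directly from the ergodic theorem applied to $W_k^{s}$.

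There is, however, one genuine gap, and it sits exactly where you locate ``the heart of the argument.'' The pointwise quenched moment comparison $E_\w[\tau_0^{\,k}]\le C_k\,(E_\w[\tau_0])^{k}$ is \emph{false}. One sees this already at the level of the second moment from \eqref{QE}--\eqref{QV}: $E_\w[\tau_1]=1+2W_0$, while $\Var_\w(\tau_1)$ contains the terms $8\,\Pi_{j+1,1}(W_j+W_j^2)$ for $j<0$. Taking an environment with a long stretch of strong leftward push on $[-2L,-L-1]$ followed by a long stretch of strong rightward drift on $[-L,0]$, one gets $W_0=\bigo(1)$ (so $E_\w[\tau_1]=\bigo(1)$) while $\Pi_{-L,1}W_{-L-1}^2\to\infty$ as $L\to\infty$; probabilistically, the walk almost never backtracks past $-L$, but on the rare event that it does it pays an enormous time, which inflates the higher quenched moments without affecting the quenched mean. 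Such environments are atypical, which is why the \emph{distributional} statement you need is nevertheless true: $E_\w[T_1^m]$ has tail exponent $\kappa/m$ under $P$, so that $E\big[E_\w[T_1^m]^{p}\big]<\infty$ for all $p<\kappa/m$ (Lemma \ref{lem:qtmom}). That lemma is the correct substitute for your pointwise bound, and its proof (via the recursive formulas for quenched moments, cf.\ Lemma \ref{lem:thirdmoment} and \eqref{recursive}) is the real work hidden in your step ``$E[W_0^{\,s}]<\infty$ for $s<\kappa/3$.'' Note also that the gap matters only for part (ii): for part (i) you only need $E[W_1]\le \E[T_1^3]<\infty$, which follows from the annealed moment bound with $p=3<\kappa$; but for $\kappa\le 3$ the annealed third moment is infinite and Jensen goes the wrong way, so the fractional moment $E\big[E_\w[\tau_1^3]^{s}\big]<\infty$ genuinely requires the quenched tail estimate of Lemma \ref{lem:qtmom} rather than any pointwise comparison with $(E_\w[\tau_1])^3$.
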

It is not hard to see that the polynomial rate of convergence in Theorem \ref{th:BETn} is optimal in the case $\k>3$. 
Indeed, since the distribution function $F_{n,\w}$ is constant on intervals of length $1/\sqrt{\Var_\w(T_n)}$, by considering the interval containing the origin for which $F_{n,\w}$ is constant we can see that 
\[
 \left\|F_{n,\w} - \Phi \right\|_\infty 
\geq \frac{1}{2} \left( \Phi\left( \frac{1}{\sqrt{\Var_\w(T_n)}} \right) - \frac{1}{2} \right). 
\]
Using \eqref{qVarlim} it follows that $\liminf_{n\to\infty} \sqrt{n} \left\| F_{n,\w} - \Phi \right\|_\infty \geq \frac{1}{2\sqrt{2\pi}\s}$, $P$-a.s.
Note that this lower bound on the rate of convergence holds for any $\k>2$, but only in the case $\k>3$ does the polynomial rate match the one in the upper bound in Theorem \ref{th:BETn}. 
Our focus on the present paper, therefore, is in identifying the optimal polynomial rate of convergence when $\k \in (2,3]$.

\subsection{Main results}

To obtain the precise polynomial rate of decay for the error in the quenched CLT when $\k \in (2,3]$ we will need the following additional technical assumptions on the distribution $P$ on environments. 
\begin{asm}\label{asm:tech}
 The distribution of $\log \rho_0$ is non-lattice (that is, $\P( \log \rho_0 \in a \Z) < 1$ for all $a>0$) and $E[\rho_0^\k \log \rho_0] < \infty$, where $\k$ is defined as in \eqref{kdef}. 
\end{asm}
\begin{rem}
 We note that assumptions are quite weak and have been assumed in a number of previous papers such as \cite{kksStable,dgWQL,estzWQL,fgpMD,gsMVSS,psWQLTn,pzSL1}. 
 While these assumptions are certainly necessary in some of these previous papers, we suspect that our main result is in fact true without this additional assumption, but our proof is simplified by making this additional assumption so that we can freely use some precise tail asymptotics which were obtained previously using this assumption. 
\end{rem}

Our first main result shows that if $\k \in (2,3)$ then the error in the quenched CLT is of the order $n^{-\frac{3}{2} + \frac{3}{\k}}$ but with an environment-dependent multiplicative term that oscillates between 0 and $\infty$. 
\begin{thm}\label{th:ExactRates}
 Let Assumptions \ref{asm:iid}, \ref{asm:CLT} and \ref{asm:tech} hold, and let $F_{n,\w}$ be as in Theorem \ref{th:BETn}
and let $\k \in (2,3)$. Then,
\begin{equation}\label{LI}
 \liminf_{n\to\infty} n^{\frac{3}{2}-\frac{3}{\k}} \left\| F_{n,\w} - \Phi \right\|_\infty  = 0, \qquad P\text{-a.s.}, 
\end{equation}
and 
\begin{equation}\label{LS}
 \limsup_{n\to\infty} n^{\frac{3}{2}-\frac{3}{\k}} \left\| F_{n,\w} - \Phi \right\|_\infty  = \infty, \qquad P\text{-a.s.}.
\end{equation}
\end{thm}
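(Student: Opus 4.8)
The plan is to trace the fluctuations in $F_{n,\w} - \Phi$ back to the fluctuations of the quenched mean $E_\w[T_n]$, since these are the dominant source of non-classical behavior when $\k \in (2,3)$. The key structural fact is that $T_n = \sum_{x=0}^{n-1} T_x^{x+1}$ can be decomposed into a sum of independent (under $P_\w$) pieces, and a Berry-Esseen-type analysis shows that $F_{n,\w}(x) - \Phi(x)$ is, to leading order, governed by an Edgeworth-type correction term involving the quenched third cumulant $\sum_x E_\w[(T_x^{x+1} - E_\w[T_x^{x+1}])^3]$ normalized by $\Var_\w(T_n)^{3/2}$, together with the discreteness correction already discussed in the excerpt. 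The heavy tails of $\rho_0$ (with index $\k \in (2,3)$) mean that the second moments $\Var_\w(T_x^{x+1})$ have finite $P$-mean but infinite variance, while the third moments $E_\w[(T_x^{x+1})^3]$ have infinite $P$-mean; consequently the partial sums $\sum_{x=0}^{n-1} E_\w[(T_x^{x+1}-E_\w[T_x^{x+1}])^3]$, after the $n^{3/\k}$ scaling, converge in distribution to a stable random variable of index $\k/3 \in (2/3, 1)$ (a one-sided stable law, since the third central moments are positive), but do \emph{not} converge $P$-a.s.

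The proof of \eqref{LS} would then go as follows. First, I would establish a lower bound of the form
\[
 \left\| F_{n,\w} - \Phi \right\|_\infty \geq c\, n^{-3/\k}\, M_n(\w) - (\text{smaller order terms}),
\]
where $M_n(\w) = n^{-3/\k} \sum_{x=0}^{n-1} E_\w[(T_x^{x+1}-E_\w[T_x^{x+1}])^3]$ is the rescaled third-cumulant sum, using the Edgeworth expansion to identify the point $x$ at which the correction term is maximized. The heavy-tailed behavior of the relevant moments of the hitting-time increments — precise tail asymptotics of the type $P(\Var_\w(T_1) > t) \sim C t^{-\k/2}$ and $P(E_\w[(T_1 - E_\w[T_1])^3] > t) \sim C' t^{-\k/3}$ — is exactly what Assumption \ref{asm:tech} is there to supply (these follow from Kesten-type renewal arguments in the cited literature). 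Given these tail asymptotics, a Borel-Cantelli argument along a geometric subsequence $n_k = \lfloor \theta^k \rfloor$ shows that $\limsup_k M_{n_k}(\w) = \infty$ $P$-a.s.: the probability that a single increment $E_\w[(T_x^{x+1}-E_\w[T_x^{x+1}])^3]$ exceeds $A n_k^{3/\k}$ for some $x < n_k$ is bounded below by a constant multiple of $n_k \cdot (A n_k^{3/\k})^{-\k/3} = A^{-\k/3}$ which does not vanish, and independence across well-separated blocks of environments gives enough independence to push the limsup to infinity. Combined with the lower bound, this yields \eqref{LS}.

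For \eqref{LI}, the idea is complementary: I would show that there is a subsequence along which \emph{all} the bad contributions are small simultaneously — both the third-cumulant sum $M_n(\w)$ and the discreteness term. The discreteness term $\frac{1}{2}(\Phi(1/\sqrt{\Var_\w(T_n)}) - \tfrac12)$ is of order $n^{-1/2}$, which is $o(n^{-3/2+3/\k})$ precisely because $\k < 3$; so along a subsequence where $M_{n_k}(\w) \to 0$ we would get $n_k^{3/2 - 3/\k}\|F_{n_k,\w} - \Phi\|_\infty \to 0$. To produce such a subsequence one shows $\liminf_n M_n(\w) = 0$ $P$-a.s. This is more delicate than the limsup statement because the summands are nonnegative, so $M_n(\w)$ cannot be small just by cancellation — instead one needs that along a sparse random subsequence the accumulated sum $\sum_{x=0}^{n-1} E_\w[(T_x^{x+1}-E_\w[T_x^{x+1}])^3]$ grows strictly slower than $n^{3/\k}$. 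Because $\k/3 < 1$, the sum is dominated by its largest few terms, and the largest term among $n$ i.i.d.\ heavy-tailed variables of index $\k/3$ is of order $n^{3/\k}$; so one needs that infinitely often the environment near the origin out to distance $n$ has \emph{no} unusually large trap, i.e.\ $\max_{x<n} E_\w[(T_x^{x+1}-E_\w[T_x^{x+1}])^3] = o(n^{3/\k})$ along a subsequence, which again follows from a Borel-Cantelli / independence argument since $P(\max_{x<n} (\cdots) \leq \delta n^{3/\k})$ is bounded away from $0$.

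The main obstacle, and the part requiring the most care, is controlling the error terms in the Edgeworth-type expansion of $F_{n,\w}$ uniformly well enough that the third-cumulant term genuinely dominates. One must show that the remainder in the quenched local-CLT / Edgeworth expansion — after accounting for the Gaussian leading term, the $n^{-1/2}$ Berry-Esseen/discreteness correction, and the third-cumulant correction of order $n^{-3/\k}$ — is $o(n^{-3/\k})$ (or at least of strictly smaller polynomial order) $P$-a.s. This is subtle because it requires quenched control of higher moments of the hitting-time increments (which have even heavier tails), and one must verify these error terms remain negligible along the random subsequences constructed above, not merely $P$-a.s.\ for fixed $n$. I expect this to be handled by combining the quantitative bounds already developed in \cite{apQCLTrates} with truncation arguments separating the ``typical'' increments from the rare large traps, bounding the contribution of the latter via the tail asymptotics from Assumption \ref{asm:tech}.
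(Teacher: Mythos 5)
Your overall strategy --- reducing both limits to the a.s.\ fluctuations of the sum of quenched third moments of the crossing times, which after $n^{3/\k}$ normalization has $(\k/3)$-stable behaviour, and then using sparse subsequences plus Borel--Cantelli --- is exactly the mechanism the paper exploits. However, there are two genuine gaps.

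First, the entire lower bound \eqref{LS} hinges on an inequality of the form $\|F_{n,\w}-\Phi\|_\infty \geq c\, n^{-3/2}\sum_i E_\w[(\t_i - E_\w[\t_i])^3] - (\text{errors})$, which you propose to extract from an Edgeworth expansion with remainder $o(n^{3/\k-3/2})$; you yourself flag this as ``the main obstacle'' and do not supply it. This is not a routine step: rigorous Edgeworth expansions for sums of independent, non-identically distributed, integer-valued (hence lattice) random variables with only a few finite quenched moments are delicate, and locating the point where the correction is maximized is both hard and unnecessary. The paper avoids the Edgeworth route entirely by proving a Stein's-method lower bound (Theorem \ref{th:NCLTlb}, adapted from \cite{hbReversing}), whose error terms involve only squared variances and fifth absolute moments and are shown to be $o(n^{3/\k-3/2})$ a.s.\ by an elementary ergodic-sums argument (Lemma \ref{lem:45small}). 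Without this, or a fully justified substitute, your argument for \eqref{LS} does not close. Once such a bound is in hand, the paper gets $\limsup_n n^{-3/\k}\sum_i E_\w[(\t_i-E_\w[\t_i])^3]=\infty$ from the pointwise bound $E_\w[(\t_i-E_\w[\t_i])^3]\geq 16W_{i-1}^3$, which yields at least $16M_j^3$ per ladder block, and the $M_j^3$ are i.i.d.\ with tail index $\k/3<1$ --- essentially the heavy-tail argument you sketch. Note also that the liminf \eqref{LI} needs no Edgeworth input at all: the classical Berry--Esseen bound $\|F_{n,\w}-\Phi\|_\infty\leq C\Var_\w(T_n)^{-3/2}\sum_k E_\w[\t_k^3]$ already suffices once one proves $\liminf_n n^{-3/\k}\sum_k E_\w[\t_k^3]=0$ a.s.

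Second, your Borel--Cantelli step for \eqref{LI} is applied to events that are not independent: the events $\{\sum_{x<n}(\cdots)\leq\d n^{3/\k}\}$ for different $n$ all involve the environment near the origin, so ``probability bounded away from zero'' does not by itself give infinitely many occurrences a.s. The paper manufactures the needed independence by passing to the measure $Q$ and ladder blocks, taking the doubly exponential subsequence $n_k=2^{2^k}$, inserting reflection points so that the modified block means $\tilde\mu_{i,\w}^{(n_k)}$ for $i\in(2n_{k-1},n_k]$ depend only on disjoint stretches of environment (making the events $A^1_{\e,k}$ genuinely independent), and controlling the overlapping initial segment $i\leq 2n_{k-1}=2\sqrt{n_k}$ by a separate union bound. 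Moreover it is the full sum, not just the maximum, that must be $\leq\e n^{3/\k}$; the paper obtains $\liminf_n Q\bigl(\sum_i\mu_{i,\w}^3<\e n^{3/\k}\bigr)>0$ from a genuine stable limit theorem (Lemma \ref{limit:stable}) rather than from a bound on the largest trap alone. These repairs are standard for experts in this area but they are exactly where the technical content of the proof lies, so as written the proposal is incomplete.
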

Our second main result shows that the asymptotics are a bit more delicate when $\k=3$ since the optimal rate of convergence involves a logarithmic term as well. 
\begin{thm}\label{th:k3exact}
 Let Assumptions \ref{asm:iid}, \ref{asm:CLT} and \ref{asm:tech} hold with $\k=3$, and let $F_{n,\w}$ be as in Theorem \ref{th:BETn}. 
Then, there exist constants $0<\ell<u<\infty$ such that 
\[
 \lim_{n\to\infty} P\left( \ell \leq \frac{\sqrt{n}}{\log n} \| F_{n,\w} - \Phi \|_\infty \leq u \right) = 1. 
\]
\end{thm}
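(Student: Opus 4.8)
\medskip\noindent\textbf{Proof strategy.}
The plan is to show that, outside an event of probability tending to $0$, the Kolmogorov distance $\|F_{n,\w}-\Phi\|_\infty$ is comparable, up to universal constants, to an ``effective skewness'' $\Gamma_n(\w)$, and then to prove that $\Gamma_n(\w)$ is of order $\log n/\sqrt n$ in probability (the genuine a.s.\ fluctuations of the heavy-tailed sums below are exactly why the statement is only in probability). Write $T_n=\sum_{k=1}^n\tau_k$ with $\tau_k=T_k-T_{k-1}$; by the strong Markov property the $\tau_k$ are independent under $P_\w$ (though not identically distributed, as $\tau_k$ depends on $(\rho_i)_{i\le k-1}$). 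Put $\mu_k=E_\w[\tau_k]$, $v_k=\Var_\w(\tau_k)$, $Y_k=\tau_k-\mu_k$, $\gamma_k=E_\w[Y_k^3]$, $\beta_k=E_\w[|Y_k|^3]$, $B_n^2=\Var_\w(T_n)=\sum_{k\le n}v_k$, and
\[
 \Gamma_n(\w)=\frac{1}{B_n^3}\sum_{k=1}^n\gamma_k .
\]
By \eqref{qVarlim}, $B_n^2/n\to\s^2$ $P$-a.s., so $B_n^3\sim\s^3 n^{3/2}$, and the theorem reduces to: (a) with probability tending to $1$, $c_1\Gamma_n(\w)\le\|F_{n,\w}-\Phi\|_\infty\le C_2\Gamma_n(\w)$ for universal $0<c_1<C_2<\infty$; and (b) with probability tending to $1$, $\ell'\le\frac{\sqrt n}{\log n}\Gamma_n(\w)\le u'$ for some $0<\ell'<u'<\infty$.

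\medskip\noindent\textbf{The probabilistic estimate (b), and the upper half of (a).}
Under Assumptions \ref{asm:iid}--\ref{asm:tech}, the Kesten-type tail asymptotics $P(W_0>t)\sim c_W\,t^{-\k}$ hold, where $W_k=\sum_{j\le k}\prod_{i=j}^k\rho_i$ is the ``trap strength'' at $k$ --- this is where Assumption \ref{asm:tech} is used --- and $(W_k)$ is a geometrically mixing Markov chain via $W_k=\rho_k(1+W_{k-1})$. From the classical formulas expressing $\mu_k$, $v_k$ and the third moment of $\tau_k$ as sums of products $\prod_{i=j}^{k-1}\rho_i$, one checks that $P(\beta_0>t)\le C\,t^{-\k/3}$ for large $t$, that $\gamma_k\ge0$ for every $k$ (the quenched law of $\tau_k$ is a geometric compound sum, whose third central moment is nonnegative), and that $\gamma_k\ge c_0\,W_{k-1}^3\,\indd{W_{k-1}>M}$ for a suitable $M$. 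Since $\k=3$ places these cubes exactly at tail index $1$, truncation at level $n\log n$ produces truncated means of order $\log n$, and a weak law of large numbers for heavy-tailed additive functionals of the mixing chain $(W_k)$ shows that $\frac{1}{n\log n}\sum_{k\le n}\beta_k$ is bounded above and $\frac{1}{n\log n}\sum_{k\le n}\gamma_k$ is bounded away from $0$, both in probability; with $B_n^3\sim\s^3 n^{3/2}$ this gives (b). The upper half of (a) is then immediate: the Berry--Esseen theorem for sums of independent, non-identically distributed variables gives $\|F_{n,\w}-\Phi\|_\infty\le C_{\mathrm{BE}}\,B_n^{-3}\sum_{k\le n}\beta_k$ for every $\w$, which by (b) is at most $u\,\log n/\sqrt n$ with probability tending to $1$.

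\medskip\noindent\textbf{The lower half of (a).}
This is the heart of the argument, via an Edgeworth-type expansion and a reverse smoothing inequality. Let $\phi_n(\th)=E_\w\big[e^{i\th(T_n-E_\w[T_n])/B_n}\big]=\prod_{k\le n}E_\w\big[e^{i\th Y_k/B_n}\big]$. Expanding each factor (using $\max_{k\le n}v_k/B_n^2\to0$ in probability, which holds since $v_k$ has upper tail index $\k/2=3/2>1$), one obtains, for $|\th|\le\d$,
\[
 \phi_n(\th)=e^{-\th^2/2}\Big(1+\tfrac{(i\th)^3}{6}\Gamma_n(\w)+r_n(\th,\w)\Big),\qquad
 |r_n(\th,\w)|\le C\big(\th^4 L_{4,n}(\w)+\th^6 L_{3,n}(\w)^2\big)+e^{-cn},
\]
where $L_{m,n}(\w)=B_n^{-m}\sum_{k\le n}E_\w[|Y_k|^m]$. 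For $m\ge4$, $E_\w[|Y_k|^m]$ has upper tail index $\k/m=3/m<1$, so $L_{m,n}(\w)=\bigo(n^{m/\k}/B_n^m)=\bigo(n^{-m/6})$ in probability; in particular $L_{4,n}=\bigo(n^{-2/3})$, while $L_{3,n}=\bigo(\log n/\sqrt n)$ by (b). Hence $\th^4 L_{4,n}+\th^6 L_{3,n}^2=o(\log n/\sqrt n)=o(\Gamma_n(\w))$ for $|\th|\le\d$. For $\d\le|\th|$ up to a full period of the lattice on which $T_n$ lives, $|\phi_n(\th)|$ is controlled by quenched characteristic-function estimates of the kind used in the proof of Theorem \ref{th:BETn}, valid with probability tending to $1$. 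Inserting all of this into Esseen's smoothing inequality in its lower-bound form yields, with probability tending to $1$,
\[
 \|F_{n,\w}-\Phi\|_\infty\ \ge\ c_1\Gamma_n(\w)-o\big(\Gamma_n(\w)\big)\ \ge\ \ell\,\frac{\log n}{\sqrt n};
\]
the lattice ``sawtooth'' term, of order $1/B_n=\bigo(n^{-1/2})$, is also $o(\Gamma_n(\w))$ and does not interfere.

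\medskip\noindent\textbf{Main obstacle.}
The difficulty is concentrated in the lower bound: making the Edgeworth-type estimate rigorous in the quenched, non-i.i.d., and (for $T_n$) lattice setting --- controlling $\phi_n(\th)$ over the whole relevant range of $\th$ uniformly in $\w$ off a small exceptional event, handling the lattice corrections, and checking that the third-moment term dominates every remainder --- together with the probabilistic bounds on the sums $\sum_{k\le n}E_\w[|Y_k|^m]$, which rest on the Kesten tail asymptotics (hence on Assumption \ref{asm:tech}) and on a weak law of large numbers for heavy-tailed functionals of the mixing chain $(W_k)$. Once (b) is established, the upper bound is routine.
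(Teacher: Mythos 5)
Your upper bound follows essentially the paper's route: Berry--Esseen for independent non-identically distributed summands reduces everything to showing $\sum_{k\le n}E_\w[\tau_k^3]=\bigo(n\log n)$ in probability, which at tail index exactly $1$ (since $\k=3$) comes from a truncated weak law of large numbers. Be aware, though, that the ``weak law for heavy-tailed additive functionals of the mixing chain $(W_k)$'' you invoke is where the real work lies: the paper has to pass to the ladder-location blocks under $Q$, isolate the main term $\sum_i\mu_{i,\w}^3$ (Lemmas \ref{lem:Et3sum} and \ref{lem:nonmain}), and then prove a quantitative covariance decay for the truncated cubes via a reflection/coupling argument (Lemma \ref{lem:covbound}) before Chebyshev can be applied. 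Citing such a WLLN as standard leaves the dependence structure unaddressed.

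The genuine gap is in your lower bound. You propose an Edgeworth expansion of the quenched characteristic function $\phi_n(\th)$ together with a ``reverse smoothing inequality,'' and you assert that the range $\d\le|\th|\le \pi B_n$ is ``controlled by quenched characteristic-function estimates of the kind used in the proof of Theorem \ref{th:BETn}.'' No such estimates exist: Theorem \ref{th:BETn} is proved by applying Petrov's Berry--Esseen theorem directly to the independent summands, not by Fourier analysis, so the uniform control of $|\phi_n(\th)|$ away from the origin --- for a quenched, non-i.i.d.\ lattice sum in which a few summands $\tau_k$ carry variance comparable to $B_n^2$ (the trap sites that produce the $\log n$ in the first place) --- would have to be built from scratch, and it is precisely at those exceptional sites that the standard factor-by-factor decay bounds degenerate. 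Moreover, smoothing inequalities naturally give upper bounds on $\|F_{n,\w}-\Phi\|_\infty$; extracting a matching \emph{lower} bound of the form $c_1\Gamma_n(\w)-o(\Gamma_n(\w))$ is the hard direction and is asserted rather than derived. The paper sidesteps all of this with a Stein's-method inequality (Theorem \ref{th:NCLTlb}, adapted from \cite{hbReversing}): it bounds $\|F_{n,\w}-\Phi\|_\infty$ from below by $\frac14\bigl|\sum_i E_\w[\xi_i^3]\bigr|$ minus error terms involving only $\sum_i(\Var_\w(\tau_i))^2$ and $\sum_i E_\w[|\tau_i-E_\w[\tau_i]|^5]$, which are killed by Lemma \ref{lem:qtmom} and an elementary ergodic bound (Lemma \ref{lem:45small}); the main term is then handled by the explicit pointwise bound $E_\w[(\tau_i-E_\w[\tau_i])^3]\ge 16\,W_{i-1}^3$ and the i.i.d.\ weak law $\frac{1}{n\log n}\sum_{j\le n}M_j^3\to C_0$ in probability. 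If you want to complete your argument, either supply the missing characteristic-function control and the reverse smoothing lemma in full, or replace that entire step by a moment-based lower bound of the Stein type.
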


The main idea behind proof of Theorem \ref{th:ExactRates} is the same as what was used in the original proof of the quenched CLT in \cite{aRWRE}: under the quenched measure the hitting time $T_n$ is the sum of independent but not identically distributed random variables. That is, $T_n = \sum_{i=1}^n \tau_i$ where $\tau_i = T_i-T_{i-1}$ and the random variables $\tau_i$ are independent under the quenched measure. 
The quenched CLT for $T_n$ can then be obtained by checking the conditions of the Lindberg-Feller CLT. 
Similarly, to obtain quantitative bounds on the quenched CLT we can apply previous results for quantitative CLT bounds for sums of independent random variables. The resulting bounds are expressed in terms of sums of quenched moments of hitting times and the main work in proving Theorem \ref{th:ExactRates} is in providing good control on these sums of quenched moments. 

We note that the control of the upper bound in \eqref{LI} is obtained 
by using the Berry-Esseen upper bounds for sums of independent random variables. 
This gives an upper bound in terms of sums of quenched third moments of hitting times. This is the same approach that was used in \cite{apQCLTrates} but we improve on the upper bound $\k \in (2,3]$ in Theorem \ref{th:QCLT} by more carefully handling the sums of the quenched third moments. 

While it is relatively easy to find results which give some quantitative upper bounds on approximation of sums of independent random variables by a Normal distribution, it seems that results giving lower bounds on the normal approximation are less well known. 
To obtain lower bounds for the quenched CLT when $\k \in (2,3]$ we follow an approach from \cite{hbReversing} which uses Stein's method. The result in \cite{hbReversing}, while optimal in some respects does not lead to the right lower bounds in our application. However, we are able to make a simple modification of the proof in that paper to obtain a new lower bound which leads to the optimal lower bound in \eqref{LS}. 

\subsection{Discussion and further questions}

As noted above, Theorem \ref{th:ExactRates} implies that when $\k \in (2,3)$ the maximal error in the quenched CLT is of the order $n^{-\frac{3}{2}+\frac{3}{\k}}$ but with a multiplicative constant depending on $\w$ and $n$ which oscillates between 0 and $\infty$. 
In fact, we conjecture that the following holds. 
\begin{conj}\label{con:weaklimit}
 If $\k \in (2,3)$, then $n^{\frac{3}{2}-\frac{3}{\k}} \left\| F_{n,\w} - \Phi \right\|_\infty$ converges in distribution to a non-degenerate random variable on $(0,\infty)$. 
\end{conj}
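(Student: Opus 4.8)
Since Conjecture~\ref{con:weaklimit} asks for a distributional limit rather than an almost‑sure one, the plan is to sharpen the Edgeworth‑type analysis behind Theorem~\ref{th:ExactRates} so as to identify the leading term of $\|F_{n,\w}-\Phi\|_\infty$ exactly, not merely its order. Write $\tau_i=T_i-T_{i-1}$, $\mu_i=E_\w[\tau_i]$, $s_n^2=\Var_\w(T_n)=\sum_{i=1}^n\Var_\w(\tau_i)$, $\Gamma_n=\sum_{i=1}^n E_\w[(\tau_i-\mu_i)^3]$, and $A_n(\w)=\Gamma_n/s_n^3$. Under $P_\w$ the increments $\tau_i$ are independent, so the first‑order Edgeworth expansion for sums of independent random variables should give
\begin{equation}\label{eq:EdgeConj}
 F_{n,\w}(x)=\Phi(x)+\frac{A_n(\w)}{6}\,(1-x^2)\,\phi(x)+R_{n,\w}(x),\qquad \phi:=\Phi',
\end{equation}
with a remainder satisfying $\|R_{n,\w}\|_\infty=o_P\!\big(n^{3/\k-3/2}\big)$ (in $P$‑probability). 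Granting \eqref{eq:EdgeConj}, the rest is quick: the function $(1-x^2)\phi(x)$ attains its maximum absolute value at $x=0$ (its only critical points are $0$ and $\pm\sqrt3$, and $\phi(0)>2\phi(\sqrt3)$), equal to $\phi(0)=1/\sqrt{2\pi}$, so $\big\|\tfrac{A_n(\w)}{6}(1-x^2)\phi(x)\big\|_\infty=\tfrac{|A_n(\w)|}{6\sqrt{2\pi}}$, and the reverse triangle inequality for $\|\cdot\|_\infty$ gives
\[
 \Big|\;\|F_{n,\w}-\Phi\|_\infty\;-\;\tfrac{|A_n(\w)|}{6\sqrt{2\pi}}\;\Big|\;\le\;\|R_{n,\w}\|_\infty\;=\;o_P\!\big(n^{3/\k-3/2}\big).
\]
Thus it suffices to show that $n^{3/2-3/\k}\,|A_n(\w)|$ converges in distribution to a random variable supported on $(0,\infty)$ with no atom at $0$; by Slutsky, $n^{3/2-3/\k}\|F_{n,\w}-\Phi\|_\infty$ then converges to $\tfrac1{6\sqrt{2\pi}}$ times that variable.

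For the denominator of $A_n$, the ergodic theorem (equivalently \eqref{qVarlim}) gives $s_n^2=\s^2 n\,(1+o(1))$, $P$‑a.s., where $\s^2=E[\Var_\w(T_1)]\in(0,\infty)$; finiteness uses $\k>2$, which forces the tail exponent of $\Var_\w(T_1)$ to exceed $1$. For the numerator the key input is that the quenched third central moment has a \emph{positive} regularly varying tail of index $\k/3$, namely $P\big(E_\w[(\tau_1-\mu_1)^3]>t\big)\sim b_0\,t^{-\k/3}$ for some $b_0\in(0,\infty)$; this is the third‑moment analogue of the Kesten‑type estimate $P(E_\w[\tau_1]>t)\sim C\,t^{-\k}$ valid under Assumption~\ref{asm:tech}, and reflects the fact that a trap forcing $E_\w[\tau_1]\approx M$ forces $E_\w[(\tau_1-\mu_1)^3]\approx c\,M^3$ with $c>0$, since the rescaled quenched hitting‑time law in a deep trap is right‑skewed. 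As $\k/3\in(2/3,1)$, such summands lie in the domain of attraction of a totally asymmetric nonnegative $(\k/3)$‑stable law; expressing $\Gamma_n$, via the regeneration structure of the RWRE, as a sum of i.i.d.\ block contributions (up to negligible boundary terms), each with tail index $\k/3$, the classical stable limit theorem gives $n^{-3/\k}\Gamma_n\limd W$ with $W>0$ a $(\k/3)$‑stable random variable. Combining with $s_n^2\sim\s^2 n$ and Slutsky yields $n^{3/2-3/\k}\,|A_n(\w)|\limd W/\s^3$, hence $n^{3/2-3/\k}\|F_{n,\w}-\Phi\|_\infty\limd \dfrac{W}{6\sqrt{2\pi}\,\s^3}$, a non‑degenerate random variable on $(0,\infty)$. (This also clarifies Theorem~\ref{th:ExactRates}: along a typical $n$ the left side is of order one, but a fresh deep trap entering, or failing to enter, $[0,n]$ makes the sequence itself oscillate.)

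The main obstacle is establishing \eqref{eq:EdgeConj} with the remainder genuinely $o_P(n^{3/\k-3/2})$ \emph{uniformly in $x$}; this is precisely what separates the conjecture from Theorem~\ref{th:ExactRates}, where only the order is needed. Two complications arise. First, the $\tau_i$ are lattice‑valued (supported on a fixed‑parity arithmetic progression), so a pointwise Edgeworth expansion cannot hold uniformly; the usual ``sawtooth'' correction has sup‑norm $O(s_n^{-1})=O(n^{-1/2})$, which is absorbed into $R_{n,\w}$ exactly because $\k<3$ gives $3/\k-3/2>-1/2$. Second, the quenched moments $E_\w[\tau_i^q]$ are extremely heterogeneous in $\w$: at the few sites carrying atypically deep traps they are polynomially large in $n$, so a Lyapunov‑ratio bound cannot be applied off the shelf. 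The natural remedy is a truncation splitting the sites into a ``bulk'' part and an $o(n)$‑sized ``trap'' part according to local trap depth, applying a quantitative Esseen smoothing/Edgeworth estimate to the bulk part — whose error is governed by the higher‑order ratios $s_n^{-4}\sum_i E_\w[(\tau_i-\mu_i)^4]\asymp n^{4/\k-2}$ and $A_n^2\asymp n^{6/\k-3}$, both $o(n^{3/\k-3/2})$ precisely because $\k>2$ — and estimating the contribution of the rare large independent summands to the smoothed characteristic function directly. Carrying out this Fourier‑analytic bookkeeping, in particular the large‑frequency regime where the lattice structure appears, while keeping every error strictly below order $n^{3/\k-3/2}$ uniformly in $x$, is the technical heart of the matter. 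The remaining ingredients — the law of large numbers for $s_n^2$, the positive regularly varying tail of $E_\w[(\tau_1-\mu_1)^3]$, the stable limit for $\Gamma_n$ via regeneration, and the concluding Slutsky step — are standard or follow the lines of results already in the literature cited above.
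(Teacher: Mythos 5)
You should be aware that the statement you were asked to prove is stated in the paper as Conjecture 4.1, i.e.\ as an \emph{open problem}: the authors give no proof, and they explicitly remark that the upper-bound functional $\sum_k E_\w[\t_k^3]/\Var_\w(T_n)^{3/2}$ and the lower-bound functional $\sum_k E_\w[(\t_k-E_\w[\t_k])^3]/\Var_\w(T_n)^{3/2}$ arising in their proof of Theorem \ref{th:ExactRates} are expected to converge to \emph{different} non-degenerate limits, so that ``different techniques would be needed'' to settle the conjecture. Your Edgeworth-expansion strategy is exactly the kind of different technique one would reach for, and your order-of-magnitude bookkeeping is correct: the lattice (sawtooth) correction is $O(n^{-1/2})=o(n^{3/\k-3/2})$ because $\k<3$, and the second-order Edgeworth terms of size $n^{4/\k-2}$ and $n^{6/\k-3}$ are both $o(n^{3/\k-3/2})$ because $\k>2$. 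The identification of the limit as a constant multiple of a totally asymmetric $(\k/3)$-stable variable is also consistent with Lemma \ref{limit:stable} of the paper, where $n^{-3/\k}\sum_i \mu_{i,\w}^3$ is shown to have such a limit under $Q$.

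However, as you yourself concede, the proposal is not a proof: the entire content of the conjecture is concentrated in your unproved expansion \eqref{eq:EdgeConj} with remainder $o_P(n^{3/\k-3/2})$ \emph{uniformly in} $x$. Classical uniform Edgeworth expansions for non-i.i.d.\ sums (e.g.\ Petrov's) require either a Cram\'er-type non-lattice condition or a Lyapunov-ratio hypothesis that fails here: in a typical environment a single block carries $E_\w[(\t_i-E_\w[\t_i])^3]\asymp n^{3/\k}$, i.e.\ the skewness is concentrated on $O(1)$ summands, and it is not known that the effect of such a summand on $F_{n,\w}-\Phi$ is captured by the universal correction $\frac{A_n}{6}(1-x^2)\phi(x)$ rather than by the (non-Gaussian, roughly exponential) shape of that block's crossing-time law convolved with a normal; your truncation-plus-smoothing remedy is a plan, not an argument. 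Two further inputs are asserted without justification: the exact tail asymptotic $P(E_\w[(\t_1-E_\w[\t_1])^3]>t)\sim b_0\,t^{-\k/3}$ with $b_0>0$ (the paper proves only the one-sided bound $E_\w[(\t_i-E_\w[\t_i])^3]\geq 16W_{i-1}^3$ in \eqref{3mlb}, and the matching upper tail needs an argument in the spirit of \eqref{Qbtails}), and the reduction of $\Gamma_n$ to i.i.d.\ block contributions, which in the paper is done via ladder locations under the tilted measure $Q$ and requires the comparison arguments of Lemmas \ref{lem:nonmain} and \ref{liminf:Q} to control boundary and cross terms. Until the uniform quenched Edgeworth expansion is actually established, the statement remains a conjecture.
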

We suspect that using the upper and lower bounds on $n^{\frac{3}{2}-\frac{3}{\k}} \left\| F_{n,\w} - \Phi \right\|_\infty$ given in the proof of Theorem \ref{th:ExactRates} both converge in distribution to non-degenerate random variables on $(0,\infty)$. Proving this would require some technical work, but should be doable. However, we do not think that the limiting distributions of the upper and lower bounds would be the same limit, and so different techniques would be needed to prove Conjecture \ref{con:weaklimit}.

The quenched CLT for the position of the RWRE in Theorem \ref{th:QCLT} can be obtained from quenched CLT for the hitting times. 
This can be done by first showing that $\lim_{n\to\infty} \frac{X_n^* - X_n}{(\log n)^2}= 0$, $\P$-a.s., where $X_n^* = \max_{k\leq n} X_k$ is the running maximum of the walk, and then by noting that
$P_\w( X_n^* < k ) = P_\w(T_k > n)$ to transfer limiting distribution statements for the hitting times to limiting distributions for $X_n^*$ (and thus also for $X_n$). 
This approach was also used in \cite{apQCLTrates} to transfer the results in Theorem \ref{th:QCLT} to quantitative  upper bounds on the quenched CLT for $X_n$. 
In particular, it was shown that if $\k>2$ and  $G_{n,\w}(x) = P_\w\left( \frac{X_n - n\vp + Z_n(\w)}{\s \vp^{3/2} \sqrt{n}} \leq x \right)$ then for any $\e>0$ we have 
\begin{equation}\label{Gnrates}
 \lim_{n\to\infty} n^{\frac{1}{4} - \frac{1}{2\k}-\e} \| G_{n,\w} - \Phi \|_\infty = 0, \quad P\text{-a.s.} 
\end{equation}
It seems, however, not to be possible to use the results of this paper to obtain quantitative lower bounds for the quenched CLT for $X_n$.
This is, in part, because the lower bounds for the error in normal approximation we use do not identify the location where the difference in the distribution functions are maximized. 
Thus, it remains open to check if the rates in \eqref{Gnrates} are (nearly) optimal or if they can be improved by different methods.

\section{Notation and preparatory lemmas}

This section introduces notations and state preliminary formulas and lemmas that will help to prove our main theorem.  Recall that for a given environment $\w = (\w_x)_{x\in\Z}$, we have defined $\rho_x = \frac{1-\w_x}{\w_x}$. Then, for any integers $i\leq j$ we define
	$$\Pi_{i,j}:=\prod_{k=i}^j\rho_k,\quad W_{i,j}:=\sum_{k=i}^j\Pi_{k,j},\quad R_{i,j}:=\sum_{k=i}^j\Pi_{i,k}
	$$
	$$W_j:=\sum_{k\leq j}\Pi_{k,j},\quad R_i:=\sum_{k=i}^\infty\Pi_{i,k}.
	$$
	These notations 
were introduced in \cite{pzSL1}
to simplify certain lengthy exact formulas for quenced probabilities or expectations.
For instance, the quenched expectation and variance of the time to cross from $i-1$ to $i$ are given by  
	\begin{equation}\label{QE}
	E_\w[\t_i]=1+2W_i.
	\end{equation}
and 
	\begin{equation}  \label{QV}
	\Var_\w(\t_i)=4(W_{i-1}+W_{i-1}^2)+8\sum_{j<i-1}\Pi_{j+1,i}(W_j+W_j^2).
	\end{equation}
\begin{rem}
 In \eqref{QE} and throughout the paper we will use $\tau_i$ to denote a random variable with the distribution of the time it takes a walk started at $i-1$ to reach $i$ for the first time. 
When $i\geq 1$ we can (and do) identify $\tau_i$ with $T_i-T_{i-1}$, but of course this does not work for $i\leq 0$. 
\end{rem}

For the proof of our main results we will also need explicit formulas for the quenched third moments of hitting times. For this, we will need the following lemma whose proof is given in Appendix \ref{app:3m}.  
	\begin{lem}\label{lem:thirdmoment}
	For $P$-a.e.\ environment $\w$ and every $i\in \Z$ we have 
	\[E_\w[\t_i^3]=\frac{1}{\w_{i-1}}+6E_\w[\t_i]\Var_\w(\t_i)+\r_{i-1} E_\w[\t_{i-1}^3].\]	
	where $E_\w[\t_i]$ and $\Var_\w(\t_i)$ are explicitly defined in \eqref{QE} and \eqref{QV} respectively. 
	\end{lem}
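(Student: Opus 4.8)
The identity is a standard first-step decomposition for hitting times. I would establish the formula by conditioning on the first step of a walk started at $i-1$. Write $\tau_i$ for the time for the walk started at $i-1$ to reach $i$. With probability $\w_{i-1}$ the walk steps directly to $i$ (contributing $1$ to $\tau_i$), and with probability $1-\w_{i-1}$ it steps to $i-2$, after which it must first return to $i-1$ — a time distributed as an independent copy of $\tau_{i-1}$ — and then start the whole excursion from $i-1$ afresh. Thus one gets the distributional recursion
\[
 \tau_i \overset{d}{=} 1 + \indd{B=0}\bigl( \tau_{i-1}' + \tau_i' \bigr),
\]
where $B$ is Bernoulli($\w_{i-1}$), $\tau_{i-1}'$ is an independent copy of $\tau_{i-1}$, and $\tau_i'$ is an independent copy of $\tau_i$; here $\tau_{i-1}'$, $\tau_i'$, and $B$ are mutually independent. (All of this is under the fixed environment $\w$, so everything is relative to $P_\w$.)

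From this recursion the third-moment identity follows by a direct expansion. Cubing $1 + \indd{B=0}(\tau_{i-1}' + \tau_i')$, using $\indd{B=0}^k = \indd{B=0}$, and taking quenched expectations, I would get
\[
 E_\w[\tau_i^3] = 1 + (1-\w_{i-1})\,E_\w\bigl[(1 + \tau_{i-1}' + \tau_i')^3 - 1\bigr]
 = \w_{i-1} + (1-\w_{i-1})\,E_\w\bigl[(1 + \tau_{i-1}' + \tau_i')^3\bigr].
\]
Expanding $(1+\tau_{i-1}'+\tau_i')^3$ and using independence of $\tau_{i-1}'$ and $\tau_i'$ gives a polynomial in the moments $E_\w[\tau_{i-1}^j]$ and $E_\w[\tau_i^j]$ for $j\le 3$. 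Collecting the terms involving $E_\w[\tau_i^3]$ on the left, the coefficient is $1-(1-\w_{i-1}) = \w_{i-1}$, so after dividing by $\w_{i-1}$ one obtains
\[
 E_\w[\tau_i^3] = \frac{1}{\w_{i-1}} + \frac{1-\w_{i-1}}{\w_{i-1}}\Bigl(\text{polynomial in } E_\w[\tau_{i-1}^j], E_\w[\tau_i^j], j\le 2, \text{ and } E_\w[\tau_{i-1}^3]\Bigr).
\]
Recognizing $\rho_{i-1} = (1-\w_{i-1})/\w_{i-1}$ accounts for the $\rho_{i-1}E_\w[\tau_{i-1}^3]$ term. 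It then remains to check that the remaining lower-order terms collapse to exactly $6\,E_\w[\tau_i]\Var_\w(\tau_i)$; this is the one genuinely computational step, and the main thing to verify is that the algebra is consistent with the known first- and second-moment recursions (the analogues of Lemma's identity for $E_\w[\tau_i]$ and $E_\w[\tau_i^2]$, equivalently \eqref{QE} and \eqref{QV}). Concretely, one uses $E_\w[\tau_i] = 1 + \rho_{i-1}(E_\w[\tau_{i-1}] + E_\w[\tau_i])$ and the corresponding second-moment identity to substitute and cancel; the combination $6E_\w[\tau_i]\Var_\w(\tau_i)$ should emerge after writing $E_\w[\tau_i^2] = \Var_\w(\tau_i) + E_\w[\tau_i]^2$.

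**Main obstacle.** The only delicate point is the bookkeeping in the last step: making sure that the many cross terms from the cubic expansion — after applying the first- and second-moment recursions — reorganize precisely into $6E_\w[\tau_i]\Var_\w(\tau_i)$ rather than into some other combination of $E_\w[\tau_i]$, $E_\w[\tau_i^2]$, and the analogous quantities at $i-1$. I would handle this by treating the first/second moment recursions as substitution rules, expressing everything in terms of $E_\w[\tau_i]$ and $\Var_\w(\tau_i)$ alone, and checking term by term. Since Lemma~\ref{lem:thirdmoment} is deferred to Appendix~\ref{app:3m}, this routine but lengthy verification is exactly what that appendix should contain, and there are no conceptual difficulties beyond careful algebra.
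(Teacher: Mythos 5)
Your proposal is correct and follows essentially the same route as the paper's Appendix~\ref{app:3m}: a first-step decomposition $\t_i = 1 + \ind{X_1 = i-2}(\t_{i-1}'+\t_i')$, cubing, taking quenched expectations using independence, and then substituting the first- and second-moment recursions to collapse the cross terms into $6E_\w[\t_i]\Var_\w(\t_i)$. The only thing you leave unexecuted is the final algebraic verification, which is exactly the computation the appendix carries out (via the identities $\r_{i-1}E_\w[\t_{i-1}] = E_\w[\t_i]-1-\r_{i-1}$ and $\r_{i-1}E_\w[\t_{i-1}^2] = \Var_\w(\t_i)-E_\w[\t_i]^2+1+\r_{i-1}$), so there is no gap in the approach.
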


For higher quenched moments of crossing times we will not need explicit formulas like in \eqref{QE}, \eqref{QV} or Lemma \ref{lem:thirdmoment} (though these can be obtainded), but we will need some control on the distribution of these quenced higher moments. For this we recall the following result from \cite{apQCLTrates} which will be used frequently in the proof of our main results. 
\begin{lem}[\cite{apQCLTrates}, Lemma 2.1]\label{lem:qtmom}
	If $E[\log \rho_0] < 0$ and $\k>0$, then for any integer $m\geq 1$, 
\[
 E\left[E_\w[T_1^m]^p \right] < \infty, \qquad \forall p < \frac{\k}{m}.
\]
\end{lem}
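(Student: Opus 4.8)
\textbf{Proof proposal for Lemma \ref{lem:qtmom}.}
The plan is to reduce the claim to a known tail estimate for the random variable $R_0 = \sum_{k\geq 0} \Pi_{0,k}$ and then use independence in the environment. First I would recall the exact formula $E_\w[T_1] = 1 + 2W_0$ from \eqref{QE}, and more generally express $E_\w[T_1^m]$ in terms of the quantities $W_j$ and the products $\Pi_{i,j}$. In fact it is cleanest to work with the crossing time $\tau_1$ from $0$ to $1$: from the first-step decomposition one has the almost sure bound $\tau_1 \leq C(1 + W_0)^m$-type control on moments, and iterating the recursion in Lemma \ref{lem:thirdmoment} (and its higher-order analogues) shows that $E_\w[\tau_1^m]$ is dominated by a finite sum of products of the form $\prod (W_{j_\ell} + W_{j_\ell}^2)$ times a product $\Pi_{a,0}$, all with $a \leq 0$. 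The upshot I want is a pointwise bound of the shape
\[
 E_\w[T_1^m] \leq C_m \left( 1 + \sum_{k\leq 0} \Pi_{k,0} \right)^m = C_m (1 + R'_0)^m,
\]
where $R'_0 := \sum_{k\leq 0}\Pi_{k,0}$ is the analogue of $R_0$ looking to the left; by stationarity $R'_0$ has the same law as $R_0$.

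Next I would invoke the classical tail asymptotics for $R_0$ (equivalently $W_0$): under $E[\log\rho_0] < 0$ and with $\k$ as in \eqref{kdef}, one has $P(R_0 > t) = t^{-\k} L(t)$ with $L$ slowly varying (this is the Kesten--Goldie renewal/stationary-solution tail estimate; here it holds even without Assumption \ref{asm:tech}, only the sharper constant would require non-latticeness). Consequently $E[R_0^q] < \infty$ for every $q < \k$ and $E[R_0^q] = \infty$ for $q > \k$. Combining this with the pointwise bound above gives
\[
 E\!\left[ E_\w[T_1^m]^p \right] \leq C_m^p \, E\!\left[ (1 + R'_0)^{mp} \right] < \infty
\]
precisely when $mp < \k$, i.e.\ $p < \k/m$, which is the assertion.

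The main obstacle is establishing the pointwise bound $E_\w[T_1^m] \leq C_m (1 + R'_0)^m$ cleanly for general $m$. For $m=1,2,3$ this is immediate from \eqref{QE}, \eqref{QV} and Lemma \ref{lem:thirdmoment} (note $W_j \leq \Pi_{j+1,0} W_0$ for $j < 0$, so each $W_j$-sum telescopes into a constant times $W_0 \leq R'_0$). For general $m$ one needs the higher moment recursion: decomposing on the first step of a walk from $0$, one gets $E_\w[\tau_1^m]$ as a polynomial in lower moments $E_\w[\tau_1^j]$ ($j<m$), $E_\w[\tau_0^j]$ ($j\leq m$), and the quenched moments at site $0$, with the site-$0$ dependence entering only through $\w_0$ (hence bounded) and $\rho_0$. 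Iterating, $E_\w[\tau_1^m]$ is a locally finite sum over $k\leq 0$ of $\Pi_{k+1,0}$ times polynomials in the $W_j$'s for $j$ near $k$; bounding every such $W_j$ by $\Pi_{j+1,0}W_0 \le R'_0\cdot R'_0$ and resumming the geometric-type series in $\Pi_{k+1,0}$ yields the claimed $(1+R'_0)^{m}$ bound, possibly with the exponent replaced by some explicit $N(m)$ — which is harmless since $N(m)p < \k$ can be arranged by shrinking $p$, but in fact a careful bookkeeping gives exactly exponent $m$. Alternatively, and perhaps more simply, one can avoid the recursion entirely: under $P_\w^0$, $T_1$ is stochastically dominated by a geometric-type sum whose mean is $1+2W_0$, and one shows directly that all quenched moments of $T_1$ are bounded by $C_m(1+W_0)^m$ using the explicit generating-function/Green's-function formula for hitting times of birth-death chains. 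Either route reduces everything to the moment bounds for $R_0$, so the only genuinely external input is the Kesten tail $P(R_0>t)\asymp t^{-\k}$.
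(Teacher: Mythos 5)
The load-bearing step of your argument --- the pointwise bound $E_\w[T_1^m]\leq C_m(1+W_0)^m$ --- is false for $m\geq 2$, and it sinks both routes you propose. Already for $m=2$: your inequality ``$W_j\leq \Pi_{j+1,0}W_0$ for $j<0$'' is backwards. What is true is $\Pi_{j+1,0}W_j\leq W_0$, i.e.\ $W_j\leq W_0/\Pi_{j+1,0}$, so in the sum $\sum_{j<i-1}\Pi_{j+1,i}(W_j+W_j^2)$ from \eqref{QV} one factor of $\Pi$ telescopes one factor of $W_j$ into $W_0$, but the remaining factor $W_j$ is \emph{not} controlled by $W_0$. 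Concretely, take $W_{-2}=H$ huge and $\r_{-1}=\r_0=\e$ tiny: then $W_0\approx \e^2H$ while the $j=-2$ term of \eqref{QV} contributes $\gtrsim \e^2H^2$, so $\Var_\w(\t_1)/E_\w[\t_1]^2\gtrsim \e^{-2}$ is unbounded over environments, and no deterministic $C_2$ exists for any $P$ whose support permits such configurations. The same phenomenon defeats the ``geometric-type domination'' alternative. Your fallback --- accept an exponent $N(m)>m$ and ``shrink $p$'' --- does not rescue the lemma either, since the statement demands the \emph{full} range $p<\k/m$, whereas $E[(1+W_0)^{N(m)p}]<\infty$ only covers $p<\k/N(m)$.

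The repair is to not collapse the iterated recursion into a single power of $W_0$. Iterating the $m$-th moment recursion gives $E_\w[\t_1^m]=\sum_{k\leq 1}\Pi_{k,0}A_k$, where $A_k$ is a polynomial in $1/\w_{k-1}$ and the lower quenched moments $E_\w[\t_k^j]$, $j<m$, hence measurable with respect to $\s(\w_x,\,x\leq k-1)$ and therefore \emph{independent} of $\Pi_{k,0}$. Taking $p$-th moments term by term (subadditivity of $x\mapsto x^p$ for $p\leq1$, Minkowski for $p>1$), using $E[(\Pi_{k,0}A_k)^p]=E[\r_0^p]^{|k|+1}E[A_k^p]$, bounding $E[A_k^p]$ by induction on $m$ via H\"older, and summing the geometric series (possible because $E[\r_0^p]<1$ for $p<\k$) yields the claim on the full range $p<\k/m$. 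This independence-plus-geometric-series structure is the essential input; note the present paper does not reprove the lemma but quotes it from \cite{apQCLTrates}, where the argument proceeds along exactly these lines. Your second step is fine but overpowered: Kesten--Goldie tail asymptotics for $R_0$ would require Assumption \ref{asm:tech}, whereas all you need is $E[W_0^q]<\infty$ for $q<\k$, which follows elementarily from Minkowski and $E[\r_0^q]<1$.
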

\begin{rem}\label{rem:annmom}
	Note that Lemma \ref{lem:qtmom} implies that $\E[T_1^p]<\infty$ for all $p < \k$. 
	Indeed, let $p<\k$ and choose any integer $m\geq p$. Since $p/m\leq 1$ it follows from Jensen's inequality that $E_\w[T_1^p] \leq E_\w[T_1^m]^{p/m}$. Thus, Lemma \ref{lem:qtmom} implies that
	$\E[T_1^p] \leq E[E_\w[T_1^m]^{p/m}] < \infty$.
	When $\k \geq 1$ it is easy to show that the reverse is also true; that is, $\E[T_1^p]=\infty$ for all $p \geq \k$. 
	Indeed, if $p\geq 1$ then Jensen's inequality implies that $\E[T_1^p] \geq E[E_\w[T_1]^p]$ and since it follows from \cite{kRDE} that $P(E_\w[T_1] > x) \sim C x^{-\k}$ we have that $E[E_\w[T_1]^p] = \infty$ for all $p\geq \k$. 
\end{rem}
	   
	Another key ingredient to prove our main theorem is a method developed by Sinai of the ``potential'' of an environment in \cite{sRecurrent}.  The method was initially used to study the limiting distributions of recurrent RWRE but has also shown to be useful for transient RWRE \cite{pzSL1},\cite{fgpMD},\cite{psWQLTn},\cite{estzWQL}, \cite{apOscill}. For a fixed environment $\omega$, let the potential $V(x)$ be the function
	
	\[V(x) =
	\begin{cases}\
	\sum_{i=0}^{x-1}\log\rho_i & \quad \text{if $x\geq1$}\\
	0 & \quad \text{if $x=0$}\\
	-\sum_{i=x}^{-1}\log\rho_i & \quad \text{if $x\leq-1$}.
	\end{cases}\]
	
	This method turns a series of transition probabilities into a landscape image, $V(x)$, on $\Z$.     Using potential $V(x)$, we can split the environment into blocks by ``ladder location'', $\{\nu_i, i\in\mathbb{Z}\},$ given by
	\begin{equation}
	\nu_{0}=\sup\{y\leq 0:V(y)<V(k),\forall k<y\},\label{nu0}
	\end{equation}
	and for $i\geq 1$,
	$$\nu_i=\inf\{x>\nu_{i-1}:V(x)< V(\nu_{i-1})\}, \quad\text{and}\quad \nu_{-i}=\sup\{y<\nu_{-i+1}:V(y)<V(k),\forall k<y\}.$$
	Each ladder point is roughly characterized as a bottom of valley with a up-hill on the left in the landscape image, 
and the ladder location $\nu_0$ identifies the first such valley to the left of the origin. 
Note that since the environment $\w$ is i.i.d., the potential $V$ is a sum of i.i.d.\ random variables, and thus the ladder locations can be viewed as points of a stationay renewal process. 
Thus, if we let $\mathfrak{B}_i = \{\omega_x: \, x \in [\nu_i,\nu_{i+1}) \}$ denote the ``blocks'' of the environment between ladder locations then it follows that the blocks $\{\mathfrak{B}_i\}_{i\in \Z}$ are independent but only identically distributed for $i\neq 0$ (this is an instance of the inspection paradox). 
 For this reason, it is sometimes convenient to work with a related measure on environments $Q$ introduced in \cite{pzSL1} and given by
	\[
	Q(\cdot) = P( \cdot | \nu_0 = 0 ).
	\]
	Under measure $Q$, the sequence $\{\omega_x\}_{x\in\mathbb{Z}}$ is no longer i.i.d.\ but this distribution has the convenient property that the environment is stationary under shifts of the ladder points of the environment.  More precisely, if $\theta$ is the natural left-shift operator on environments given by $(\theta \omega)_x=\omega_{x+1}$, then for any $k\in \mathbb{Z}$ the environments $\omega$ and $\theta^{\nu_k}\omega$ have the same distribution under $Q$.  Moreover, under the measure $Q$ the blocks between adjacent ladder points $\mathfrak{B}_i$ are i.i.d.\ for all $i \in \mathbb{Z}$ with each having the same distribution as $\mathfrak{B}_1$ under the original measure $P$ on environments. 

We conclude this section with some tail asymptotics that will be used throughout the paper. We will often be concerned with the time it takes the random walk to cross from one ladder location to the next, and it turns out that a good rough measure of the time it takes for the random walk to do this is given in terms of the height of the potential between these ladder locations. 
For this reason 
let us define the exponential height of the potential between ladder points by
\begin{equation}\label{Midef}
 M_i:=\max\{e^{V(j)-V(\nu_{i})}:\nu_{i-1}<j\leq \nu_i\}=\max\left\{\Pi_{\nu_{i-1},j}:\nu_{i-1}\leq j<\nu_i\right\}, \qquad i \in \mathbb{Z}.
\end{equation}
As the exponential heights depend only on the environment between ladder locations, it follows that the $M_i$ are i.i.d.\ for $i\neq 0$ under the measure $P$ and i.i.d.\ for all $i$ under the measure $Q$. Moreover, the distribution of $M_i$ for $i\neq 0$ is the same under both $P$ and $Q$, and it follows from a result of Iglehart \cite{Igl72} that $M_1$ has heavy tails; that is, 
there exists a constant $C_0>0$ such that
	\begin{equation}
	Q(M_1>x)\sim C_0 x^{-\k},\quad\text{ as }x\to\infty.\label{M}
	\end{equation}
	It turns out that the exponential height $M_i$ has a crucial role to determine the size of quenched expectation and variance of a time to cross from $\nu_{i-1}$ to ${\nu_i}$, denoted by $E_\w^{\nu_{i-1}}[T_{\nu_i}]$ and $\Var_\w(T_{\nu_i}-T_{\nu_{i-1}})$ respectively.   Roughly, the size of $M_i$ is comparable to $E_\w^{\nu_{i-1}}[T_{\nu_i}]$, and $M_i^2$ is comparable to $\Var_\w(T_{\nu_i}-T_{\nu_{i-1}})$. Indeed, it was shown in \cite[Theorem 1.4 and Theorem 5.1]{pzSL1} that
	\begin{equation}\label{Qbtails}
	Q(E_\w[T_{\nu_1}]>x)\sim K_\infty x^{-\k},\quad\text{and}\quad Q(\Var_\w(T_{\nu_1})>x)\sim K_\infty x^{-\k/2},\quad\forall x\geq0,
	\end{equation}
	for some $K_\infty>0$.  

\section{Upper bound on rates of convergence}

As in \cite{apQCLTrates} the starting point for our upper bound are the Berry-Esseen bounds for sums of independent random variables. 
\begin{thm}[Theorem V.3.6 in \cite{petrovRW}]\label{BEind}
 Let $\xi_1,\xi_2,...,\xi_n$ be independent random variables which have zero mean and finite 
 third moments. 
 Let $Z_n=\sum_{i=1}^n\xi_i$.  Then there exists an absolute constant $C_1 \in (0,\infty)$ such that 
\[
 \sup_x \left| P\left( \frac{Z_n}{\Var(Z_n)} \leq x \right) - \Phi(x) \right| \leq \frac{C_1 \sum_{i=1}^n E[|\xi_i|^3]}{\Var(Z_n)^{3/2}}.   
\]
\end{thm}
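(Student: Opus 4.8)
The plan is to prove this classical (non-identically-distributed) Berry--Esseen estimate by the Fourier-analytic method, following Petrov's monograph. By homogeneity we may rescale so that $\Var(Z_n)=1$; write $\sigma_j^2=\Var(\xi_j)$, so that $\sum_{j=1}^n\sigma_j^2=1$, and set $L_n:=\sum_{j=1}^n E[|\xi_j|^3]$, which is the right-hand side after this normalization. Since the supremum on the left is always at most $1$, we may assume $L_n\le\tfrac14$, and it then suffices to show $\sup_x|F_n(x)-\Phi(x)|\le C_1 L_n$ for a universal $C_1$, where $F_n$ is the distribution function of $Z_n=\sum_{j=1}^n\xi_j$.

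First I would invoke Esseen's smoothing inequality: for every $T>0$,
\[
 \sup_x\bigl|F_n(x)-\Phi(x)\bigr|\le\frac{1}{\pi}\int_{-T}^{T}\left|\frac{f_n(t)-e^{-t^2/2}}{t}\right|dt+\frac{24}{\pi T}\sup_x\Phi'(x),
\]
where $f_n(t)=\prod_{j=1}^n f_j(t)$ and $f_j$ is the characteristic function of $\xi_j$. Applying this with $T=\tfrac{1}{4L_n}$ makes the boundary term a universal constant times $L_n$, so everything reduces to bounding the integral by $O(L_n)$.

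The core of the argument is two characteristic-function estimates valid on $|t|\le T=\tfrac{1}{4L_n}$. Taylor expansion gives $f_j(t)=1-\tfrac12\sigma_j^2 t^2+r_j(t)$ with $|r_j(t)|\le\tfrac16|t|^3 E[|\xi_j|^3]$; splitting the indices according to whether $\sigma_j|t|\le1$ and using $\sigma_j^3\le E[|\xi_j|^3]$ together with $|t|\sum_j E[|\xi_j|^3]\le\tfrac14$ to dispose of the large-$\sigma_j$ indices yields the product bound $|f_n(t)|\le e^{-t^2/3}$. Comparing each $f_j$ with $e^{-\sigma_j^2 t^2/2}$ through the elementary inequality $|e^{-x}-(1-x)|\le\tfrac{x^2}{2}$ (for $x\ge0$) and telescoping the product then gives
\[
 \bigl|f_n(t)-e^{-t^2/2}\bigr|\le C\,L_n\,|t|^3\,e^{-t^2/4},\qquad |t|\le\tfrac{1}{4L_n},
\]
for a universal constant $C$. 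Substituting this into the smoothing inequality bounds the integral by $C L_n\int_{-\infty}^{\infty}t^2 e^{-t^2/4}\,dt=C'L_n$, which together with the boundary term completes the argument.

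The hardest step will be the second characteristic-function estimate — the telescoping comparison of $f_n(t)$ with $e^{-t^2/2}$ that retains a Gaussian prefactor $e^{-ct^2}$. The naive bound, in which one replaces all but one factor of the product by $1$, destroys the exponential decay and leaves an integrand that fails to be integrable against $1/|t|$ out to $T\asymp1/L_n$; one must keep the decay, which is exactly where the product bound $|f_n(t)|\le e^{-t^2/3}$ is used and where the quartic remainder terms $\sigma_j^4 t^4$ have to be absorbed, e.g.\ via $\sum_j\sigma_j^4\le\max_j\sigma_j^2\le L_n^{2/3}$. As an alternative one could instead run Stein's method for sums of independent summands: a leave-one-out decomposition together with the standard bounds on the solution of the Stein equation yields $\sup_x|F_n(x)-\Phi(x)|\le C\sum_j E[|\xi_j|^3]$ directly, at the cost of developing the Stein apparatus.
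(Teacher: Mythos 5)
The paper does not supply a proof of this statement: it is quoted verbatim as Theorem V.3.6 of Petrov's monograph \cite{petrovRW}, so there is no internal proof to compare against. Your sketch follows the classical Fourier-analytic route that Petrov himself uses (normalize to unit variance, reduce via Esseen's smoothing inequality with cutoff $T \asymp 1/L_n$, and control $f_n$ through a decay bound $|f_n(t)|\le e^{-t^2/3}$ together with a telescoping comparison of $\prod_j f_j$ with $\prod_j e^{-\sigma_j^2 t^2/2}$), and the overall structure is sound. One incidental remark: the theorem as transcribed in the paper has a typo, writing $Z_n/\Var(Z_n)$ where $Z_n/\sqrt{\Var(Z_n)}$ is meant; your normalization to $\Var(Z_n)=1$ sidesteps this.

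There is, however, a concrete gap in how you propose to absorb the quartic remainders. You invoke $\sum_j\sigma_j^4\le\max_j\sigma_j^2\le L_n^{2/3}$, but this bound is too weak. Tracing it through, the quartic part contributes to the smoothing integral an amount of order $L_n^{2/3}\int_0^\infty t^3 e^{-t^2/4}\,dt\asymp L_n^{2/3}$, which (since $L_n<1$) strictly dominates $L_n$; consequently the claimed intermediate estimate $|f_n(t)-e^{-t^2/2}|\le C L_n|t|^3 e^{-t^2/4}$ cannot hold uniformly up to $T=1/(4L_n)$, and the final error would be $O(L_n^{2/3})$ rather than the desired $O(L_n)$. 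The correct bookkeeping is cheaper than what you propose: with $\sum_j\sigma_j^2=1$ one has $\sigma_j\le1$ for every $j$, and Lyapunov's inequality gives $\sigma_j^3\le E[|\xi_j|^3]$, hence $\sum_j\sigma_j^4\le\sum_j\sigma_j^3\le L_n$. With that, the telescoping estimate becomes $|f_n(t)-e^{-t^2/2}|\le C\,L_n\,(|t|^3+t^4)\,e^{-t^2/4}$ (note the extra $t^4$ term, which you should retain rather than absorb into $|t|^3$), and after dividing by $|t|$ and integrating against the Gaussian weight both pieces contribute $O(L_n)$, closing the argument.
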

Applying Theorem \ref{BEind} to the random variables $\{\tau_i - E_\w[\tau_i]\}_{i\leq n}$ (which are independent under the quenched measure) and then noting that
\begin{align*}
  E_\w[|\tau_k - E_\w[\tau_k]|^3]
&\leq E_\w\left[ (\tau_k + E_\w[\tau_k])(\tau_k - E_\w[\tau_k])^2 \right] \\
&= E_\w\left[ \tau_k^3 - \tau_k^2 E_\w[\tau_k] - \tau_k (E_\w[\tau_k])^2 + (E_\w[\tau_k])^3 \right] \\
&\leq E_\w[\tau_k^3] 
\end{align*}
 we obtain that 
\begin{equation}\label{bound1}
 \sup_x \left| F_{n,\w}(x) - \Phi(x) \right| 
\leq \frac{C_1}{\Var_\w(T_n)^{3/2}} \sum_{k=1}^n E_\w\left[\tau_k^3 \right].
\end{equation}
Since $\Var_\w(T_n)^3 \sim \s^3 n^{3/2}$ by \eqref{qVarlim}, 
the liminf \eqref{LI} in Theorem \ref{th:ExactRates} will then follow immediately from the following proposition. 
\begin{prop}\label{prop:uppermain}
If $\k \in (2,3)$, then, 
\[
 \liminf_{n\to\infty} \frac{1}{n^{3/\k}} \sum_{k=1}^n E_\w[ \tau_k^3 ] = 0, \quad P\text{-a.s.}
\]
\end{prop}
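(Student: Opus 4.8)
The plan is to show that along a suitable random subsequence of times $n$, the sum $\sum_{k=1}^n E_\w[\tau_k^3]$ grows no faster than $n^{3/\k}$ (in fact is $o(n^{3/\k})$). The starting point is the recursion in Lemma~\ref{lem:thirdmoment}, which, when iterated, expresses $E_\w[\tau_i^3]$ as a sum over $j\leq i$ of terms of the form $\Pi_{j,i-1}\left(\frac{1}{\w_{j-1}} + 6 E_\w[\tau_j]\Var_\w(\tau_j)\right)$. After summing over $k\leq n$ and reorganizing, the dominant contribution will come from the $E_\w[\tau_j]\Var_\w(\tau_j)$ pieces, and using \eqref{QE}, \eqref{QV} one sees that $E_\w[\tau_k^3]$ is (up to lower-order pieces) comparable to a quantity controlled by the crossing times between ladder locations. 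The first step, therefore, is to pass from the per-site crossing times $\tau_k$ to the per-block crossing times $T_{\nu_{i}} - T_{\nu_{i-1}}$: using the block decomposition and the tail asymptotics \eqref{M} and \eqref{Qbtails}, one shows that $\sum_{k=1}^n E_\w[\tau_k^3]$ is, up to constants and negligible errors, of the same order as $\sum_{i=1}^{I_n} M_i^3$, where $I_n$ is the number of ladder locations in $[0,n]$ (so $I_n \sim n/\E_Q[\nu_1-\nu_0]$ by the renewal theorem), and the $M_i$ are the exponential heights from \eqref{Midef}.

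\textbf{Reduction to a sum of heavy-tailed i.i.d.\ variables.} Once the problem is reduced to understanding $\liminf_{n\to\infty} n^{-3/\k}\sum_{i=1}^{n} M_i^3$ (reindexing so $I_n \asymp n$), I would invoke the theory of sums of i.i.d.\ nonnegative random variables with regularly varying tails: since $Q(M_1 > x) \sim C_0 x^{-\k}$ with $\k \in (2,3)$, the variable $M_1^3$ has tail $Q(M_1^3 > x) \sim C_0 x^{-\k/3}$ with index $\k/3 \in (2/3,1)$, so $M_1^3$ has infinite mean and the partial sums $S_n := \sum_{i=1}^n M_i^3$ are in the domain of attraction of a stable law of index $\k/3 < 1$, hence $S_n / n^{3/\k}$ converges in distribution to a (nondegenerate, positive) stable random variable. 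The key fact I need is not the distributional limit but a one-sided almost-sure statement: for a sum of i.i.d.\ nonnegative variables in the domain of attraction of a stable law of index $\alpha \in (0,1)$, one has $\liminf_{n\to\infty} S_n / (n^{1/\alpha} g(n)) = 0$ a.s.\ for appropriate slowly varying $g$ — indeed, since the stable limit puts positive mass near $0$ and the events $\{S_n \leq \epsilon n^{1/\alpha}\}$ occur for a positive-density set of $n$ along which one can extract an a.s.\ subsequence, a Borel--Cantelli / blocking argument gives $\liminf S_n/n^{1/\alpha} = 0$ a.s. Applying this with $\alpha = \k/3$ yields $\liminf_{n\to\infty} n^{-3/\k} \sum_{i=1}^n M_i^3 = 0$, $Q$-a.s., and since the statement is about almost-sure behavior this transfers back to the original measure $P$.

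\textbf{Controlling the error terms.} The technical heart of the argument is making precise the reduction in the first paragraph: one must show that the difference between $\sum_{k=1}^n E_\w[\tau_k^3]$ and a constant multiple of $\sum_{i\leq I_n} M_i^3$ is $o(n^{3/\k})$ almost surely. This requires (a) bounding the contribution of the $\frac{1}{\w_{j-1}}$ terms and cross terms in the iterated recursion, which behave like lower moments of crossing times and are $o(n^{3/\k})$ by Lemma~\ref{lem:qtmom} together with the a.s.\ growth-rate lemma (stated in the commented-out block of Section~2 — I would reinstate or re-prove the statement that a stationary ergodic sequence with finite $p$-th moment has partial sums $o(n^{1/p+\epsilon})$); (b) controlling the boundary block containing $0$ and the fact that $n$ need not be a ladder location, contributing only an error that is small relative to $n^{3/\k}$ since a single block's contribution $M_i^3$ is $o(n^{3/\k})$ for $\k<3$; and (c) showing the two-sided comparison $c M_i^3 \leq E_\w^{\nu_{i-1}}[(T_{\nu_i}-T_{\nu_{i-1}})^3] \leq C M_i^3$ on a set of blocks of full density, again using \eqref{Qbtails} and standard estimates for crossing times of a single valley. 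I expect step (c), i.e.\ the uniform-over-blocks comparison of the quenched third moment of a block-crossing time with the cube of the block's exponential height (and ruling out that rare atypical blocks inflate the sum beyond order $n^{3/\k}$), to be the main obstacle, since it is exactly the place where the heavy-tailed nature of the environment must be handled carefully rather than through crude moment bounds.
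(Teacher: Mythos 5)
Your high-level plan (reduce to block quantities governed by the exponential heights $M_i$, then invoke heavy-tailed/stable-law behavior and a blocking/Borel--Cantelli argument to get $\liminf = 0$ a.s.) is the same in spirit as the paper's, but the crucial reduction step is left as an unresolved gap, and as stated it will not go through. You propose to show that $\sum_{k=1}^n E_\w[\tau_k^3]$ is of the same order as $\sum_i M_i^3$ via a two-sided pointwise comparison $c M_i^3 \leq E_\w^{\nu_{i-1}}[(T_{\nu_i}-T_{\nu_{i-1}})^3] \leq C M_i^3$ ``on a set of blocks of full density.'' Two problems: (1) you are comparing the wrong quantities --- the thing you need to bound is $\sum_{k=\nu_{i-1}+1}^{\nu_i} E_\w[\tau_k^3]$, the sum of per-site third moments, not the block-crossing third moment $E_\w^{\nu_{i-1}}[(T_{\nu_i}-T_{\nu_{i-1}})^3]$, and these are genuinely different objects; (2) more seriously, no uniform upper bound of the form $\mu_{i,\w} \leq C M_i$ can hold, even on a full-density set, because $\mu_{i,\w}$ (and a fortiori the per-block sum of third moments) depends on the whole environment to the left of $\nu_{i-1}$ through $W_{\nu_{i-1}}$, not just on block $i$; one only has tail-equivalence of the distributions of $\mu_{i,\w}$ and $M_i$, and the contribution from rare atypical left environments is exactly what you cannot rule out with ``standard estimates for a single valley.''

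The paper sidesteps both issues. It never passes to $M_i^3$; instead it derives a clean algebraic upper bound $\sum_{i=1}^n E_\w[\tau_i^3] \le E_\w[T_n] + 6E_\w[T_n]\Var_\w(T_n) + R_{0,n-1}E_\w[\tau_0^3]$ (Lemma~\ref{lem:Et3sum}, from iterating the explicit third-moment recursion of Lemma~\ref{lem:thirdmoment}), applies it block by block, and isolates the main term $6\sum_i \mu_{i,\w}^3$ with the remaining three sums shown to be $O(n)$ a.s.\ (Lemma~\ref{lem:nonmain}). The price is that the $\mu_{i,\w}$ are \emph{not} i.i.d.\ (unlike your $M_i$), so your ``classical'' $\liminf S_n/n^{1/\alpha} = 0$ result for i.i.d.\ heavy-tailed sums does not apply directly; the paper recovers independence approximately by inserting reflection points $\lfloor\sqrt{n}\rfloor$ ladder blocks to the left, proves the reflected version $\tilde\mu^{(n)}_{i,\w}$ is exponentially close, and then runs a Borel--Cantelli argument along the doubly exponential subsequence $n_k = 2^{2^k}$. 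So your steps (a) and (b) are along the right lines and essentially appear in the paper, but step (c) --- which you correctly flag as the main obstacle --- is a genuine hole, and the correct fix is to keep $\mu_{i,\w}$ rather than pass to $M_i$, coupled with the reflection trick to manufacture independence.
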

\begin{proof}
To prove Proposition \ref{prop:uppermain}, we first make two observations. First, since we are taking a liminf we can take the limit along any subsequence of sites which is convenient and so we will use the sequence of ladder locations $\{\nu_i \}_{i\geq 1}$. Since $\nu_n/n \to E[\nu_1]<\infty$, $P$-a.s.\ as $n\to\infty$, it is therefore enough to show that 
\begin{equation}\label{umP}
 \liminf_{n\to\infty} \frac{1}{n^{3/\k}} \sum_{k=1}^{\nu_n} E_\w[ \tau_k^3 ] = 0, \quad P\text{-a.s.}
\end{equation}
In fact, this statement will follow if we can prove the same limit under the measure $Q$ on environments instead. 
That is, we will show the following. 
\begin{prop}\label{prop:uppermain2}
	If $\k \in (2,3)$,
	\[\liminf_{n\to\infty}\frac{1}{n^{3/\kappa}}\sum_{i=0}^{\nu_n} E_\omega[\t_i^3]=0,\quad Q-a.s.\]
\end{prop}
We will defer the proof of Proposition \ref{prop:uppermain2} for the moment to show how \eqref{umP} follows from this. 
First of all, let $\tilde{P}$ be the measure on environments which is obtained by sampling an environment according to measure $P$ and then shifting the ladder point $\nu_0 \leq 0$ to the origin. That is,  $\tilde{P}(\w\in\cdot)=P(\theta^{\nu_0}\w\in\cdot)$.
Since $\tilde{P}$ is absolutely continuous with respect to $Q$ (see \cite[Lemma 4.2]{apOscill})
it follows from Proposition \ref{prop:uppermain2} that 
\[
 \liminf_{n\to\infty}\frac{1}{n^{3/\kappa}}\sum_{i=1}^{\nu_n} E_\omega[\t_i^3]=0,\quad \tilde{P}-a.s.,
\]
or equivalently 
\[
 \liminf_{n\to\infty}\frac{1}{n^{3/\kappa}}\sum_{i=\nu_0 + 1}^{\nu_n} E_\omega[\t_i^3]=0,\quad P-a.s.
\]
Since $\nu_0 \leq 0$ this implies \eqref{umP}. 
Pending the proof of Proposition \ref{prop:uppermain2} this completes the proof of Proposition \ref{prop:uppermain}. 
\end{proof}

The remainder of this section will be devoted toward proving Proposition \ref{prop:uppermain2}. 
Our first step is the following lemma which gives a convenient upper bound for sums of quenched third moments in terms of lower moments of crossing times of a larger interval. 
\begin{lem}\label{lem:Et3sum}
	\begin{equation}\label{sum:upper}
	\sum_{i=1}^n E_\w[\t_i^3]\leq E_\w[T_n]+6E_\w[T_n]\Var_\w(T_n)+R_{0,n-1}E_\w[\t_0^3]
	\end{equation}
\end{lem}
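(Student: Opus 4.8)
The plan is to sum the recursion from Lemma~\ref{lem:thirdmoment} over $i = 1, \dots, n$ and telescope the terms involving $E_\w[\tau_i^3]$. Recall that Lemma~\ref{lem:thirdmoment} gives, for each $i$,
\[
 E_\w[\tau_i^3] = \frac{1}{\w_{i-1}} + 6 E_\w[\tau_i]\Var_\w(\tau_i) + \rho_{i-1} E_\w[\tau_{i-1}^3].
\]
First I would iterate this relation to express $E_\w[\tau_i^3]$ as $\sum_{k=1}^{i} \Pi_{k,i-1}\left(\frac{1}{\w_{k-1}} + 6 E_\w[\tau_k]\Var_\w(\tau_k)\right) + \Pi_{0,i-1} E_\w[\tau_0^3]$ (using the convention that $\Pi_{i,i-1}=1$), or — what amounts to the same thing — just sum the recursion directly over $i$ and collect like terms. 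Summing over $i = 1, \dots, n$, the left side is $\sum_{i=1}^n E_\w[\tau_i^3]$, the last term on the right contributes $\sum_{i=1}^n \rho_{i-1}E_\w[\tau_{i-1}^3]$, and so moving the overlapping part of this sum to the left gives
\[
 E_\w[\tau_n^3] - \rho_{\,?}\cdots\ \text{(telescoping)}\ \leq \sum_{i=1}^n \frac{1}{\w_{i-1}} + 6\sum_{i=1}^n E_\w[\tau_i]\Var_\w(\tau_i) + (\text{boundary term}).
\]

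Concretely, the cleanest route is the iterated form: substituting repeatedly yields
\[
 \sum_{i=1}^n E_\w[\tau_i^3] = \sum_{i=1}^n \sum_{k=1}^{i} \Pi_{k,i-1}\left(\frac{1}{\w_{k-1}} + 6 E_\w[\tau_k]\Var_\w(\tau_k)\right) + \left(\sum_{i=1}^n \Pi_{0,i-1}\right) E_\w[\tau_0^3].
\]
The coefficient of $E_\w[\tau_0^3]$ is $\sum_{i=1}^n \Pi_{0,i-1} = \sum_{j=0}^{n-1}\Pi_{0,j} = R_{0,n-1}$, which matches the last term in~\eqref{sum:upper}. For the main double sum, I would swap the order of summation and bound $\sum_{i=1}^n \Pi_{k,i-1} = \sum_{j=k-1}^{n-1}\Pi_{k,j}$. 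Here I need two facts: $\sum_{k=1}^n \frac{1}{\w_{k-1}}\left(\sum_{j=k-1}^{n-1}\Pi_{k,j}\right) \leq E_\w[T_n]$ and $\sum_{k=1}^n E_\w[\tau_k]\Var_\w(\tau_k)\left(\sum_{j=k-1}^{n-1}\Pi_{k,j}\right) \leq E_\w[T_n]\Var_\w(T_n)$, which together with the factor $6$ give exactly the middle terms of~\eqref{sum:upper} (the stray $\frac{1}{\w_{k-1}}$-vs-$E_\w[T_n]$ discrepancy is absorbed since $\frac{1}{\w_{k-1}} = 1 + \rho_{k-1} \leq 1 + 2W_k = E_\w[\tau_k]$, so the first bound is really $\sum_k E_\w[\tau_k]\cdot(\text{something}\leq 1)\leq E_\w[T_n]$ — or more simply, $\sum_{i=1}^n\frac1{\w_{i-1}} \le \sum_{i=1}^n E_\w[\tau_i] = E_\w[T_n]$ after noting the inner geometric-type sum is at least $1$, giving the bound directly without swapping).

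I expect the main obstacle to be verifying the two inequalities $\sum_i E_\w[\tau_i]\Var_\w(\tau_i)\,(\cdots) \leq E_\w[T_n]\Var_\w(T_n)$ cleanly. The point is that under the quenched measure the $\tau_i$ are independent, so $\Var_\w(T_n) = \sum_{i=1}^n \Var_\w(\tau_i)$ and $E_\w[T_n] = \sum_{i=1}^n E_\w[\tau_i]$; hence $E_\w[T_n]\Var_\w(T_n) = \sum_{i,j} E_\w[\tau_i]\Var_\w(\tau_j) \geq \sum_i E_\w[\tau_i]\Var_\w(\tau_i)$, and one just needs the weights $\sum_{j=k-1}^{n-1}\Pi_{k,j}$ in the double-sum to be controlled — but in fact these are \emph{not} bounded by $1$, so the honest argument must instead recognize that $\Pi_{k,j}$ times $E_\w[\tau_{j+1}]$-type quantities reassemble into the variance/expectation of $T_n$ via the explicit formulas~\eqref{QE}, \eqref{QV}. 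The safest presentation is therefore to avoid the iterated form and instead induct on $n$: assume~\eqref{sum:upper} for $n-1$, add $E_\w[\tau_n^3] = \frac1{\w_{n-1}} + 6E_\w[\tau_n]\Var_\w(\tau_n) + \rho_{n-1}E_\w[\tau_{n-1}^3]$, and check that $\frac1{\w_{n-1}} \le E_\w[\tau_n]$ closes the $E_\w[T_n]$ term, that $6E_\w[\tau_n]\Var_\w(\tau_n) + 6\rho_{n-1}\,(\text{part of }E_\w[T_{n-1}]\Var_\w(T_{n-1}))$ is dominated by the increment $6(E_\w[T_n]\Var_\w(T_n) - E_\w[T_{n-1}]\Var_\w(T_{n-1}))$ using $E_\w[T_n] = E_\w[T_{n-1}] + E_\w[\tau_n]$ and $\Var_\w(T_n) = \Var_\w(T_{n-1}) + \Var_\w(\tau_n)$, and that $\rho_{n-1}$ times the old $R_{0,n-2}E_\w[\tau_0^3]$ plus $\rho_{n-1}E_\w[\tau_{n-1}^3]$ (the latter already used) fits under $R_{0,n-1}E_\w[\tau_0^3]$ since $R_{0,n-1} = R_{0,n-2} + \Pi_{0,n-1}$ and $\Pi_{0,n-1} = \rho_{n-1}\Pi_{0,n-2}$. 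The base case $n=1$ is immediate from Lemma~\ref{lem:thirdmoment} with $i=1$, since $E_\w[\tau_1^3] = \frac1{\w_0} + 6E_\w[\tau_1]\Var_\w(\tau_1) + \rho_0 E_\w[\tau_0^3]$ and $\frac1{\w_0}\le E_\w[\tau_1] = E_\w[T_1]$, $\Var_\w(\tau_1) = \Var_\w(T_1)$, $\rho_0 = \Pi_{0,0} = R_{0,0}$.
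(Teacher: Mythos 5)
Your first approach (iterate the recursion from Lemma~\ref{lem:thirdmoment} all the way down to $E_\w[\tau_0^3]$, sum over $i$, and swap the order of summation in the double sum) is exactly the paper's approach, and you correctly obtain the coefficient $R_{0,n-1}$ of $E_\w[\tau_0^3]$ and the first-moment term. But you stall precisely at the point that carries the content of the lemma: after swapping, the paper does \emph{not} try to bound the weights $\sum_{j=k-1}^{n-1}\Pi_{k,j}$ by a constant. Instead it uses the one-line inequality
\[
\Pi_{j,i-1}\,E_\w[\tau_j] \;=\; \Pi_{j,i-1}\bigl(1+2W_{j-1}\bigr)\;\leq\; E_\w[\tau_i]\qquad\text{for all } 1\le j\le i,
\]
which follows because $\Pi_{j,i-1}W_{j-1}=\sum_{k\le j-1}\Pi_{k,i-1}\le W_{i-1}$ and $\Pi_{j,i-1}\le W_{i-1}$ (or $=1=\Pi_{i,i-1}$ when $j=i$). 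Once you have this, the double sum becomes $\sum_{i=1}^n\sum_{j\le i} E_\w[\tau_i]\Var_\w(\tau_j)\le E_\w[T_n]\Var_\w(T_n)$ immediately. You sense that the $\Pi_{k,j}$ factors should ``reassemble'' via \eqref{QE}--\eqref{QV}, but that sentence is where the proof actually lives, and you never write the inequality down.

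Your fallback induction does not close. Adding $E_\w[\tau_n^3]=\frac{1}{\w_{n-1}}+6E_\w[\tau_n]\Var_\w(\tau_n)+\rho_{n-1}E_\w[\tau_{n-1}^3]$ to the inductive hypothesis for $n-1$ leaves you to absorb the full third moment $\rho_{n-1}E_\w[\tau_{n-1}^3]$ into the increments of the right side, and this term is not controlled by the hypothesis (which bounds only the \emph{sum} $\sum_{i<n}E_\w[\tau_i^3]$, not any single summand). Expanding it out, what you need is essentially $\sum_{j=1}^{n-1}\Pi_{j,n-1}\bigl(\tfrac1{\w_{j-1}}+6E_\w[\tau_j]\Var_\w(\tau_j)\bigr)\le\text{(available slack)}$; for the first-moment piece this reduces to $\Pi_{0,n-2}+2W_{-1,n-2}\ge1$, which fails for typical transient environments where the potential drifts downward, so the induction cannot go through without a strengthened hypothesis. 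Moreover your parenthetical claim that ``$\rho_{n-1}$ times the old $R_{0,n-2}E_\w[\tau_0^3]$ \dots fits under $R_{0,n-1}E_\w[\tau_0^3]$'' would require $\rho_{n-1}\le1$, which is not assumed. The clean route is the one you started with: iterate fully first (so every $E_\w[\tau_i^3]$ with $i\ge1$ is eliminated before any inequality is taken), sum, and then apply $\Pi_{j,i-1}E_\w[\tau_j]\le E_\w[\tau_i]$.
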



\begin{proof}
We begin with the following recursive equation for the third moment of the crossing time $\tau_i$ in Lemma \ref{lem:thirdmoment} given by  
\begin{equation}\label{third:decomp}
E_\w[\t_i^3]=\frac{1}{\w_{i-1}}+6E_\w[\t_i]\Var_\w(\t_i)+\r_{i-1} E_\w[\t_{i-1}^3]
\end{equation}
Iterating \eqref{third:decomp} a total of $i$ times we get
\begin{equation}\label{thirdmoment}
E_\w[\t_i^3]=1+2W_{1,i-1} + \Pi_{0,i-1}+6\sum_{0<j\leq i}\Pi_{j,i-1}E_\w[\t_j]\Var_\w(\t_j)+\Pi_{0,i-1} E_\w[\t_0^3]
\end{equation}
Summing this over $i$ from $1$ to $n$ we then get 
	\begin{align}
	\sum_{i=1}^n E_\w[\t_i^3]&= \sum_{i=1}^{n}(1+2W_{1,i-1} + \Pi_{0,i-1})+6\sum_{i=1}^n\sum_{0<j\leq i}\Pi_{j,i-1}E_\w[\t_j]\Var_\w(\t_j)+\sum_{i=1}^n\Pi_{0,i-1} E_\w[\t_0^3] \nonumber \\
	&\leq \sum_{i=1}^{n}(1+2W_{i-1})+6\sum_{i=1}^n\sum_{0<j\leq i}\Pi_{j,i-1}E_\w[\t_j]\Var_\w(\t_j)+R_{0,n-1} E_\w[\t_0^3] \nonumber \\
	&= E_\w[T_n]+6\sum_{i=1}^n\sum_{0<j\leq i}\Pi_{j,i-1}E_\w[\t_j]\Var_\w(\t_j)+R_{0,n-1} E_\w[\t_0^3], \label{Et3d3}
	\end{align}
where in the last line we used that $E_\w[\tau_i] = 1 + 2W_{i-1}$.
	It remains to bound the middle term in \eqref{Et3d3} by $6 E_\w[T_n]\Var_\w(T_n)$. 
	To this end, note for $j\leq i$ that 
	\[
	\Pi_{j,i-1}E_\w[\t_j] = \Pi_{j,i-1} (1+2W_{j-1}) 
	\leq 2 W_{i-1} < E_\w[\tau_i], 
	\]
	and thus we may bound the middle term in \eqref{Et3d3} by 
	\[
	6 \sum_{i=1}^n\sum_{0<j\leq i} E_\w[\tau_i] \Var_\w(\t_j)
	= 6 \sum_{j=1}^n  \left(\sum_{i=j}^n E_\w[\tau_i] \right)\Var_\w(\t_j)
	\leq 6 E_\w[T_n] \Var_\w(T_n). 
	\]
\end{proof}
Lemma \ref{lem:Et3sum} is too rough an upper bound to be able to obtain Proposition \ref{prop:uppermain} directly
since the second term in the upper bound grows quadratically in $n$ and $n^2 \gg n^{3/\k}$. 
However, Lemma \ref{lem:Et3sum} does give a good upper bound when applied to intervals between ladder locations. 
To this end, we introduce the notation 
\[
 \mu_{i,\w}=E_{\w}^{\nu_{i-1}}[T_{\nu_i}]\quad \text{ and }\quad 
\sigma^2_{i,\w}=\Var_{\w}(T_{\nu_{i}}-T_{\nu_{i-1}})
\]
for the quenched mean and variance of the time to cross from $\nu_{i-1}$ to $\nu_i$. 
By applying Lemma \ref{lem:Et3sum} to a sums of the third moment of crossing times between consecutive ladder locations we obtain 
\begin{equation}
\sum_{j=\nu_{i-1}+1}^{\nu_i}E_\w[\t_j^3]\leq \mu_{i,\w}+6\mu_{i,\w}\sigma^2_{i,\w}+R_{\nu_{i-1},\nu_i-1}E_\w[\t_{\nu_{i-1}}^3], \nonumber
\end{equation}
and then summing this for $i\leq n$ gives
\begin{align}
\sum_{i=1}^{\nu_n}E_\w[\t_i^3]&\leq \sum_{i=1}^{n} \mu_{i,\w}+6\sum_{i=1}^{n}\mu_{i,\w}\sigma^2_{i,\w}+\sum_{i=1}^{n} R_{\nu_{i-1},\nu_i-1}E_\w[\t_{\nu_{i-1}}^3]\nonumber\\
&=\sum_{i=1}^{n} \mu_{i,\w}+6\sum_{i=1}^{n} \mu_{i,\w}(\sigma^2_{i,\w}-\mu_{i,\w}^2)+6\sum_{i=1}^{n} \mu_{i,\w}^3+\sum_{i=1}^{n} R_{\nu_{i-1},\nu_i-1}E_\w[\t_{\nu_{i-1}}^3].\label{decomp:thirdmoment}
\end{align}
The third term in \eqref{decomp:thirdmoment} will be the main term since the following Lemma implies that the other three terms grow only linearly in $n$. 
\begin{lem}\label{lem:nonmain}
If $\k>2$ then there exists a constant $A<\infty$ such that 
 \[
  \lim_{n\to\infty} \frac{1}{n} \left\{ \sum_{i=1}^{n} \mu_{i,\w}+6\sum_{i=1}^{n} \mu_{i,\w}(\sigma^2_{i,\w}-\mu_{i,\w}^2)+\sum_{i=1}^{n} R_{\nu_{i-1},\nu_i-1}E_\w[\t_{\nu_{i-1}}^3] \right\}
= A, \quad Q\text{-a.s.}
 \]
\end{lem}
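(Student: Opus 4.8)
The plan is to show that each of the three sums in the braces is a partial sum of a stationary, ergodic sequence with finite first moment under $Q$, so that the ergodic theorem gives the stated almost-sure convergence to a finite constant $A$ (the sum of the three individual limits). The key structural fact I would invoke is that under $Q$ the blocks $\mathfrak{B}_i$ between consecutive ladder locations are i.i.d., hence any functional of a single block — in particular $\mu_{i,\w}$, $\sigma_{i,\w}^2$, and $R_{\nu_{i-1},\nu_i-1}E_\w[\tau_{\nu_{i-1}}^3]$ — depends on $\w$ only through finitely many adjacent blocks and therefore forms a stationary ergodic sequence in $i$ (the quenched third moment $E_\w[\tau_{\nu_{i-1}}^3]$ is not block-local, but it is a convergent sum over blocks to the left of $\nu_{i-1}$, so it is still a measurable shift-covariant functional). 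So the whole problem reduces to three moment estimates under $Q$.

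The three estimates I would carry out are: (i) $E_Q[\mu_{1,\w}] < \infty$, which is immediate from the tail bound $Q(E_\w[T_{\nu_1}] > x) \sim K_\infty x^{-\k}$ in \eqref{Qbtails} together with $\k > 2 > 1$; (ii) $E_Q[\mu_{1,\w}(\sigma_{1,\w}^2 - \mu_{1,\w}^2)] < \infty$, i.e. control of the centered third-moment-scale quantity $\mu_{1,\w}\sigma_{1,\w}^2$ (and of $\mu_{1,\w}^3$, which incidentally is the main term and is \emph{not} integrable — this is why one must keep the centering $\sigma_{1,\w}^2 - \mu_{1,\w}^2$ rather than bounding crudely); here I expect the heuristic "$\mu_{1,\w}\sim M_1$, $\sigma_{1,\w}^2\sim M_1^2$" to be misleading at the level of products, and one needs that the \emph{difference} $\sigma_{1,\w}^2 - \mu_{1,\w}^2$ has a lighter tail than $\mu_{1,\w}^2$, so that $\mu_{1,\w}(\sigma_{1,\w}^2-\mu_{1,\w}^2)$ has tail decaying faster than $x^{-3/\k}$ scaled, i.e. is integrable for $\k>2$; and (iii) $E_Q[R_{\nu_0,\nu_1-1} E_\w[\tau_{\nu_0}^3]] < \infty$. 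For (iii) I would bound $R_{\nu_0,\nu_1-1} = \sum_{k=\nu_0}^{\nu_1-1}\Pi_{\nu_0,k}$ and note that by independence of $E_\w[\tau_{\nu_0}^3]$ (a functional of the environment to the left of $\nu_0$) from the block $\mathfrak{B}_0$ under a suitable reference measure, this factors; then one uses $E_Q[E_\w[\tau_{\nu_0}^3]] < \infty$, which follows from Lemma \ref{lem:qtmom} (applied with $m=3$, $p=1<\k/3$ is false for $\k\le 3$ — so instead one must exploit that $\tau_{\nu_0}$ is a crossing time within a single block and use a block-level version, or appeal to the explicit recursion in Lemma \ref{lem:thirdmoment}) — this bookkeeping is where care is needed.

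I expect the main obstacle to be part (ii): establishing the integrability of $\mu_{1,\w}\sigma_{1,\w}^2$ (equivalently, of the centered version together with $\mu_{1,\w}^3$) under $Q$ when $\k \in (2,3]$, since naively $\mu_{1,\w}^3$ has tail $\sim x^{-\k/3}$ and is non-integrable; one must genuinely use the cancellation in $\sigma_{1,\w}^2 - \mu_{1,\w}^2$. Concretely I would derive, from \eqref{QV} and \eqref{QE}, an expression for $\sigma_{1,\w}^2 - \mu_{1,\w}^2$ as a sum of products $\Pi_{j+1,\nu_1}(W_j + W_j^2)$ over $j < \nu_1$ minus the squared contributions, and show this quantity, multiplied by $\mu_{1,\w}$, has finite $Q$-expectation by estimating its tail via \eqref{M} and \eqref{Qbtails}; the point is that the leading $M_1^3$-behavior of $\mu_{1,\w}^3$ is exactly the leading behavior of $\mu_{1,\w}\sigma_{1,\w}^2$ as well, so it cancels and leaves a remainder of order $M_1^{3-\delta}$ for some $\delta>0$, which is $Q$-integrable precisely because $\k>2$. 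Once all three integrability statements are in hand, Birkhoff's ergodic theorem applied to the i.i.d.-block structure under $Q$ finishes the proof, with $A = E_Q[\mu_{1,\w}] + 6E_Q[\mu_{1,\w}(\sigma_{1,\w}^2-\mu_{1,\w}^2)] + E_Q[R_{\nu_0,\nu_1-1}E_\w[\tau_{\nu_0}^3]]$.
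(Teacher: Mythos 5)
Your overall architecture is exactly the paper's: observe that each of the three summands is a stationary ergodic sequence under $Q$ (because the blocks between ladder locations are i.i.d.\ under $Q$), reduce to three first-moment bounds, and take $A$ to be the sum of the three expectations. The two places you flag as delicate are precisely where the paper does its work, so let me record how it closes them, since your sketches stop short of a proof. For your point (ii), the paper does not extract the cancellation between $\mu_{1,\w}\s^2_{1,\w}$ and $\mu_{1,\w}^3$ by hand from \eqref{QE}--\eqref{QV}; it imports from \cite{pzSL1} the tail bound $Q(\s^2_{1,\w}-\mu_{1,\w}^2>x)=o(x^{-\k+\e})$ --- exactly the ``lighter tail for the difference'' you predict --- and then finishes by Cauchy--Schwarz, since for $\k>2$ both $(\s^2_{1,\w}-\mu_{1,\w}^2)$ and $\mu_{1,\w}$ then have finite second moments under $Q$. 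If you decline to cite that estimate you must prove it, and your one-line heuristic about a remainder of order $M_1^{3-\delta}$ is not a substitute for that (nontrivial) computation.

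For your point (iii): the factorization $E_Q[E_\w[\t_0^3]R_{0,\nu_1-1}]=E_Q[E_\w[\t_0^3]]\,E[R_{0,\nu_1-1}]$ is legitimate because the event $\{\nu_0=0\}$ is measurable with respect to $(\w_x)_{x<0}$, so under $Q$ the environment on $\{x\geq 0\}$ retains its unconditioned i.i.d.\ law. The remaining claim $E_Q[E_\w[\t_0^3]]<\infty$ is the real content, and you are right that Lemma \ref{lem:qtmom} cannot deliver it for $\k\leq 3$; but your alternative suggestion that $\t_0$ is ``a crossing time within a single block'' is not correct, since the walk started at $-1$ can backtrack past every ladder location to its left, so $E_\w[\t_0^3]$ depends on the entire environment on $(-\infty,-1]$. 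The paper's argument iterates the recursion of Lemma \ref{lem:thirdmoment} to infinity, regroups the resulting series over the ladder blocks to the left of the origin, uses the i.i.d.\ block structure under $Q$ to produce a convergent geometric series in $E_Q[\Pi_{0,\nu_1-1}]<1$, and bounds the per-block factor $E_Q[E_\w[\t_{\nu_1}]\s^2_{1,\w}]$ by H\"older's inequality combined with the exponential tail of $E_\w[\t_0]$ under $Q$ and $Q(\s^2_{1,\w}>x)\sim Cx^{-\k/2}$. So: right skeleton, but both of the hard integrability estimates in your plan still need to be supplied, one by citation and one by the block-decomposition argument just described.
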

\begin{proof}
 Since the terms in each of the three sums are ergodic sequences under the measure $Q$, we need only to show that they all have finite mean under $Q$. For the first term we have $E_Q[\mu_{1,\w}] < \infty$ whenever $\k>1$, so we need only show that
\begin{equation}\label{finmean}
 E_Q[ \mu_{1,\w}(\sigma^2_{1,\w}-\mu_{1,\w}^2)] < \infty
 \quad \text{and}\quad
 E_Q[ R_{0,\nu_1-1}E_\w[\t_{0}^3]] < \infty.
\end{equation}
For the first claim in \eqref{finmean}, it follows from equation (49) and Corollary 5.4 in \cite{pzSL1} that $Q( \sigma^2_{1,\w}-\mu_{1,\w}^2 > x) = o(x^{-\k+\e})$ for any $\e>0$, 
which implies that $E_Q\left[ (\sigma^2_{1,\w}-\mu_{1,\w}^2 )^2 \right] < \infty$ whenever $\k>2$. Since $E_Q[\mu_{1,\w}^2]<\infty$ also whenever $\k>2$, the first claim in \eqref{finmean} follows from the  Cauchy-Schwartz inequality. 

For the second claim in \eqref{finmean}, first note that since $E_\w[\tau_0^3] \in \s(\w_x, \, x\leq -1)$ and $R_{0,\nu_1-1} \in \s(\w_x, \, x\geq 0)$ we have that
\[
E_Q[ E_\w[\tau_0^3]R_{0,\nu_1-1}] = E_Q[ E_\w[\tau_0^3] ] E[ R_{0,\nu_1-1}] \leq  E_Q[ E_\w[\tau_0^3] ] E[ R_0 ]
= E_Q[ E_\w[\tau_0^3] ] \frac{ E[\rho_0]}{1-E[\rho_0]}, 
\]
where the last equality holds when $E[\rho_0] < 1$ (i.e., $\k>1$). 
Thus, we need to show that 
\begin{equation}\label{tau03m}
 E_Q[ E_\w[\tau_0^3] ] < \infty.
\end{equation}
To prove \eqref{tau03m}, first note that it follows from iterating the recursive formula for the third moment in Lemma \ref{lem:thirdmoment} that\footnote{Clearly one can iterate the recursive formula in Lemma \ref{lem:thirdmoment} finitely many times to obtain partial sums of the right side of \eqref{recursive} plus an error term. It is not too difficult then to show that the error term vanishes as the number of recursions increases. In fact, it was shown in \cite[pages 1393-1394]{apQCLTrates} that this works for finding a formula for $E_\w[\tau_i^m]$ for any integer $m\geq 1$.}
	\begin{equation}\label{recursive} 
	E_\w[\t_{0}^3]=E_\w[\t_{0}]+6\sum_{k=1}^\infty \Pi_{-k+1,-1}E_\w[\t_{-k+1}]\Var(\t_{-k+1}).
	\end{equation}
	We split the sum of the second term by the groups of sums between ladder locations $\{\nu_{j}\}_{j\leq 0}$ to the left of the origin. 
	\begin{align*}
	\sum_{k=1}^\infty \Pi_{-k+1,-1}E_\w[\t_{-k+1}]\Var(\t_{-k+1})&=\sum_{j\leq 0} \sum_{x=\nu_{j-1}+1}^{\nu_{j}}\Pi_{x,-1}E_\w[\t_x]\Var_\w(\t_x)\\
	&=\sum_{j\leq 0}\Pi_{\nu_{j},-1}\sum_{x=\nu_{j-1}+1}^{\nu_j}\Pi_{x,\nu_j-1}E_\w[\t_x]\Var_\w(\t_x) \\
        &\leq \sum_{j\leq 0}\Pi_{\nu_{j},-1}\sum_{x=\nu_{j-1}+1}^{\nu_{j}}E_\w[\t_{\nu_j}]\Var_\w(\t_x) \\
        &=\sum_{j\leq 0}\Pi_{\nu_{j},-1}E_\w[\t_{\nu_j}]\sigma_{j,\w}^2,
	\end{align*}
where in the inequality in the third line we used that $\Pi_{x,y-1}E_\w[\t_x] \leq E_\w[\t_y]$ for any $x<y$ (which follows easily from \eqref{QE}). 
Applying this to \eqref{recursive} and taking expectations under the measure $Q$ we obtain 
	\begin{align*}
	E_Q[E_\w[\t_{0}^3]]&\leq E_Q[E_\w[\t_{0}]]+6\sum_{j\leq 0} E_Q\left[\Pi_{\nu_j,-1}E_\w[\t_{\nu_j}]\sigma_{j,\w}^2\right]\\
	&=E_Q[E_\w[\t_{0}]]+6E_Q\left[E_\w[\t_{\nu_1}]\sigma_{1,\w}^2\right]\sum_{k=0}^\infty E_Q\left[\Pi_{0,\nu_1-1}\right]^k, 
	\end{align*}
where in the last equality we used that the blocks of environments between ladder locations are i.i.d.\ under the measure $Q$. 
	Since $E_Q[\Pi_{0,\nu_1-1}]<1$, the infinite sum on the right is finite, and thus we need only to show that 
the other terms in the last line above are finite. 
To this end, first note that it was shown in \cite[Lemma 2.2 and Theorem 5.1]{pzSL1} that $Q(E_\w[\tau_0]>x) \leq C e^{-cx}$ and $Q(\s_{1,\w}^2 > x) \sim C x^{-\k/2}$. 
This implies that $E_Q[E_\w[\tau_0]]<\infty$, and since $E_\w[\tau_{\nu_1}]$ has the same distribution as $E_\w[\tau_0]$ under $Q$ by applying H\"older's inequality for some $p \in (1,\k/2)$ we have
\[
 E_Q\left[E_\w[\t_{\nu_1}]\sigma_{1,\w}^2\right]\leq E_Q\left[E_\w[\t_0]^{p/(p-1)}\right]^{(p-1)/p} E_Q\left[\left(\sigma_{1,\w}^2\right)^p\right]^{1/p} < \infty. 
\]
This completes the proof of \eqref{tau03m}
\end{proof}

Applying Lemma \ref{lem:nonmain} to \eqref{decomp:thirdmoment}, we see that to finish the proof of Proposition \ref{prop:uppermain2} we need to prove the following 
\begin{lem}\label{liminf:Q}
If $\k \in (2,3)$, then
 \[
  \liminf_{n\to\infty} \frac{1}{n^{3/\k}} \sum_{i=1}^n \mu_{i,\w}^3 = 0, \quad Q\text{-a.s.}
 \]
\end{lem}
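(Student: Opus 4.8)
The plan is to exploit the heavy-tailed nature of $\mu_{1,\w}$ under $Q$. By \eqref{Qbtails} we have $Q(\mu_{1,\w} > x) \sim K_\infty x^{-\k}$, so $\mu_{1,\w}^3$ has tail $Q(\mu_{1,\w}^3 > x) \sim K_\infty x^{-\k/3}$ with $\k/3 \in (2/3,1)$. Thus $\{\mu_{i,\w}^3\}_{i\geq 1}$ is a sequence of i.i.d.\ (under $Q$) nonnegative random variables with infinite mean and regularly varying tail of index $\k/3 < 1$. For such sums the normalization $n^{1/(\k/3)} = n^{3/\k}$ is exactly the natural scale: $S_n := \sum_{i=1}^n \mu_{i,\w}^3$ satisfies $S_n/n^{3/\k}$ converges in distribution to a stable law. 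The key point is that a nondegenerate stable subordinator has $0$ in its support (indeed it hits arbitrarily small values with probability bounded below), and more importantly one has a strong/liminf-type result: for i.i.d.\ nonnegative $\zeta_i$ with regularly varying tail of index $\a \in (0,1)$ one has $\liminf_{n\to\infty} S_n / n^{1/\a} = 0$ almost surely. This is a classical fact (see e.g.\ the work on one-sided stable laws / LIL-type results, or it can be derived directly from a Borel--Cantelli argument along a rapidly growing subsequence combined with a truncation estimate).

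The steps I would carry out are: First, record that under $Q$ the blocks between ladder locations are i.i.d., hence $\{\mu_{i,\w}\}_{i\geq 1}$ are i.i.d.; combined with \eqref{Qbtails} this gives the tail $Q(\mu_{1,\w}^3 > x) \sim K_\infty x^{-\k/3}$ with exponent $\a := \k/3 \in (2/3,1)$. Second, prove the deterministic/probabilistic liminf lemma: if $\zeta_1,\zeta_2,\dots$ are i.i.d.\ nonnegative with $P(\zeta_1 > x)$ regularly varying of index $-\a$, $\a \in (0,1)$, then $\liminf_n n^{-1/\a}\sum_{i=1}^n \zeta_i = 0$ a.s. The cleanest route is to pick a fast subsequence $n_k = 2^{2^k}$ (or $n_k = k^k$) and estimate $P\bigl(\sum_{i=n_{k-1}+1}^{n_k} \zeta_i > \e n_k^{1/\a}\bigr)$. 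Truncating each $\zeta_i$ at level $\e n_k^{1/\a}$: the contribution of a single untruncated jump exceeding that level is controlled by a union bound, $n_k \cdot P(\zeta_1 > \e n_k^{1/\a}) \asymp n_k \cdot (\e n_k^{1/\a})^{-\a} = \e^{-\a}$, which is not summable by itself, so instead choose the truncation level slightly larger, say $\e_k n_k^{1/\a}$ with $\e_k \downarrow 0$ slowly enough that $\e_k^{-\a}$ times the overshoot probability is still summable while $\e_k \to 0$; and bound the truncated-sum mean by $n_k \cdot E[\zeta_1 \wedge \e_k n_k^{1/\a}] \asymp n_k \cdot (\e_k n_k^{1/\a})^{1-\a} = \e_k^{1-\a} n_k^{1/\a}$, which is $o(n_k^{1/\a})$ since $\a<1$ and $\e_k\to 0$. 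A Markov inequality on the truncated sum plus Borel--Cantelli then forces $\sum_{i=n_{k-1}+1}^{n_k}\zeta_i = o(n_k^{1/\a})$ along the subsequence (for the full sum $S_{n_k}$ one also adds the contributions of earlier blocks, but $\sum_{j\le k-1} n_j^{1/\a} \ll n_k^{1/\a}$ by the fast growth of $n_k$, provided we also control those earlier blocks — a diagonal choice of $\e_k$ handles all $j \le k$ simultaneously). Hence $\liminf_k S_{n_k}/n_k^{1/\a} = 0$ a.s., which gives $\liminf_n S_n/n^{1/\a} = 0$ a.s. Third, apply this with $\zeta_i = \mu_{i,\w}^3$, $\a = \k/3$, to obtain $\liminf_n n^{-3/\k}\sum_{i=1}^n \mu_{i,\w}^3 = 0$, $Q$-a.s., which is exactly Lemma \ref{liminf:Q}.

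The main obstacle is the truncation bookkeeping in the liminf lemma: because the tail index $\a$ is so close to $1$ (it can be as large as, but bounded away from, $1$, namely $\a = \k/3 \le 1$), the truncated mean $n \cdot E[\zeta_1 \wedge c]$ and the single-jump probability $n \cdot P(\zeta_1 > c)$ are both right at the borderline of the scale $n^{1/\a}$, so one cannot take the truncation level equal to a fixed constant times $n^{1/\a}$ — one genuinely needs the slowly-vanishing $\e_k$ and must verify that the resulting Borel--Cantelli series converges while still $\e_k \to 0$. (A convenient choice is $\e_k = 1/k$, or even $\e_k = (\log n_k)^{-1}$, combined with $n_k$ growing doubly exponentially so that $\sum_k \e_k^{1-\a}$-type or $\sum_k n_k P(\zeta_1 > \e_k n_k^{1/\a})$-type series telescope.) An alternative, perhaps cleaner, route that avoids reproving a classical result: invoke directly that $S_n/n^{1/\a}$ converges in distribution to a strictly stable random variable $W_\a > 0$ supported on $(0,\infty)$ with $P(W_\a < \d) > 0$ for every $\d > 0$, and then upgrade the distributional statement to an a.s.\ liminf via the independence of disjoint blocks and Borel--Cantelli — but this still requires the block argument above to promote ``in probability'' to ``a.s.'', so the real work is the same. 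Either way, once the i.i.d.\ structure under $Q$ and the tail \eqref{Qbtails} are in hand, the argument is a self-contained fact about sums of i.i.d.\ heavy-tailed summands and does not use anything specific to RWRE.
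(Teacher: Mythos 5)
Your overall strategy --- reduce to a liminf statement for heavy-tailed summands, take a doubly exponential subsequence, and run a truncation/Borel--Cantelli argument --- is close to the paper's, but it rests on the incorrect assertion that $\{\mu_{i,\w}\}_{i\geq 1}$ are i.i.d.\ under $Q$. Under $Q$ the \emph{blocks} $\mathfrak{B}_i = \{\w_x : x\in[\nu_i,\nu_{i+1})\}$ are i.i.d., but $\mu_{i,\w} = E_\w^{\nu_{i-1}}[T_{\nu_i}]$ is not a function of $\mathfrak{B}_{i-1}$ alone: from \eqref{QE}, $E_\w[\tau_j] = 1 + 2W_{j-1}$ with $W_{j-1} = \sum_{k\leq j-1}\Pi_{k,j-1}$ summing over all sites to the left of $j-1$, so $\mu_{i,\w}$ depends on the entire environment to the left of $\nu_i$, not just the $(i-1)$-st block. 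The $\mu_{i,\w}$ are therefore only stationary (with exponentially decaying dependence), and the Borel--Cantelli step in your ``classical liminf lemma'' --- which needs the events built from disjoint index blocks $\{n_{k-1}+1,\dots,n_k\}$ to be genuinely independent across $k$ --- does not go through as written.

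This is exactly the obstruction the paper's proof is designed to circumvent. The paper introduces the modified quantities $\tilde\mu_{i,\w}^{(n)}$, obtained by placing a reflection at $\nu_{i-1}-\fl{\sqrt n}-1$, so that $\tilde\mu_{i,\w}^{(n)}$ depends only on a bounded window of the environment; along the subsequence $n_k=2^{2^k}$ (chosen so $\sqrt{n_k}=n_{k-1}$) the events $A^1_{\e,k}$ built from indices $i\in(2n_{k-1},n_k]$ then become truly independent. It then takes an additional argument (citing \cite[Lemma 5.3]{psWQLTn}) to show $\sum_{i=1}^n(\mu_{i,\w}^3 - (\tilde\mu_{i,\w}^{(n)})^3) = o(n^{3/\k})$ $Q$-a.s., and another Borel--Cantelli estimate to control the leftover indices $i\le 2n_{k-1}$. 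Your proposal skips this entire decoupling step, and the ``alternative cleaner route'' via the stable-law distributional limit also silently depends on the i.i.d.\ structure (in the paper the distributional limit, Lemma~\ref{limit:stable}, is instead derived from the point-process convergence in \cite{psWQLTn}, which already accommodates the dependence). Once the $\tilde\mu_{i,\w}^{(n)}$ surrogate is in place, the rest of your bookkeeping essentially matches the paper; without it, the proof has a genuine gap.
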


To prepare for the proof of Lemma \ref{liminf:Q} we first prove the following. 
	\begin{lem}\label{limit:stable}
If $\k \in (0,3)$, then under the measure $Q$ on environments, 
		\begin{equation}\label{stablelim}
		\frac{1}{n^{3/\k}} \sum_{i=1}^n \mu_{i,\w}^3 \Longrightarrow Y_{\kappa/3}, \quad \text{as } n\to \infty,
		\end{equation}
		where $Y_{\k/3}$ is a totally asymmetric $(\k/3)$-stable random variable. 
		In particular, this implies that for any $\e>0$,
		\begin{equation}\label{limit:zero}
		\liminf_{n\to\infty} Q\left( \sum_{i=1}^n \mu_{i,\w}^3 < \e n^{3/\k} \right) > 0. 
		\end{equation}
	\end{lem}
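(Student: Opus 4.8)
The plan is to recognize the sum $\sum_{i=1}^n \mu_{i,\w}^3$ as a sum of i.i.d.\ heavy-tailed random variables under $Q$ and invoke the classical stable limit theorem (Gnedenko-Kolmogorov). Recall that under $Q$ the blocks of environment between ladder locations are i.i.d., so the sequence $\{\mu_{i,\w}\}_{i\geq 1}$ is i.i.d.\ under $Q$, and hence so is $\{\mu_{i,\w}^3\}_{i\geq 1}$. From the tail asymptotics \eqref{Qbtails}, namely $Q(\mu_{1,\w}>x) = Q(E_\w[T_{\nu_1}]>x)\sim K_\infty x^{-\k}$, we get
\[
 Q(\mu_{1,\w}^3 > x) = Q(\mu_{1,\w} > x^{1/3}) \sim K_\infty x^{-\k/3}, \qquad x\to\infty.
\]
Since $\k<3$ we have $\k/3<1$, so $\mu_{1,\w}^3$ is in the domain of attraction of a totally asymmetric $(\k/3)$-stable law (the index being less than $1$, no centering is needed). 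The correct normalization is the $n^{3/\k}$ appearing in the statement: indeed if $a_n$ is chosen so that $n\, Q(\mu_{1,\w}^3 > a_n) \to 1$, then $a_n \sim (K_\infty n)^{3/\k}$, which is $n^{3/\k}$ up to the constant $K_\infty^{3/\k}$; absorbing that constant into the (still totally asymmetric $(\k/3)$-stable) limit random variable $Y_{\k/3}$ gives \eqref{stablelim}. This is a direct application of, e.g., the stable limit theorem as stated in standard references (Feller, or Durrett); the only inputs are the i.i.d.\ structure under $Q$ and the exact tail \eqref{Qbtails}, both already available in the excerpt.

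For the second assertion \eqref{limit:zero}, note that the limit $Y_{\k/3}$ is a nondegenerate random variable supported on $(0,\infty)$ (it is a one-sided stable law of index strictly less than $1$), and in particular $Q(Y_{\k/3} < \e) > 0$ for every $\e>0$. The map $y\mapsto \mathbf{1}\{y<\e\}$ has a discontinuity only at $y=\e$, and since the distribution of $Y_{\k/3}$ is continuous (stable laws have densities), $\e$ is a continuity point; hence $\mathbf{1}\{\,\cdot\, < \e\}$ is a.s.\ continuous with respect to the law of $Y_{\k/3}$. By the Portmanteau theorem and the weak convergence \eqref{stablelim},
\[
 \liminf_{n\to\infty} Q\!\left( \frac{1}{n^{3/\k}}\sum_{i=1}^n \mu_{i,\w}^3 < \e \right) \;\geq\; Q(Y_{\k/3} < \e) \;>\; 0,
\]
which is exactly \eqref{limit:zero} after clearing the denominator.

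The proof is essentially routine given the machinery assembled earlier; there is no serious obstacle. The one point requiring a small amount of care is making sure the normalizing sequence $n^{3/\k}$ is the right one and that the limit can legitimately be taken to be totally asymmetric $(\k/3)$-stable after absorbing the constant $K_\infty^{3/\k}$ — this is just the standard bookkeeping in the stable limit theorem, using that $\k/3 \in (0,1)$ so that (i) no truncation/centering is needed and (ii) the regularly varying tail index $\k/3<1$ places $\mu_{1,\w}^3$ squarely in the domain of attraction of a one-sided stable law. One should also confirm that $\mu_{1,\w}^3 \geq 0$ (immediate, since $\mu_{1,\w} = E_\w^{\nu_0}[T_{\nu_1}] \geq 0$) so that the attracting stable law is indeed the totally asymmetric one supported on $(0,\infty)$, which is what is needed for \eqref{limit:zero}.
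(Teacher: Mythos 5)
Your argument for \eqref{limit:zero} from \eqref{stablelim} is fine (the set $\{y<\e\}$ is open, so the Portmanteau theorem applies directly, and the one-sided $(\k/3)$-stable law charges every interval $(0,\e)$). The problem is the very first step of your proof of \eqref{stablelim}: the sequence $\{\mu_{i,\w}\}_{i\geq 1}$ is \emph{not} i.i.d.\ under $Q$. What is i.i.d.\ under $Q$ is the sequence of blocks $\mathfrak{B}_i=\{\w_x:\,x\in[\nu_i,\nu_{i+1})\}$, but $\mu_{i,\w}=E_\w^{\nu_{i-1}}[T_{\nu_i}]$ is not a function of the block $[\nu_{i-1},\nu_i)$ alone: since the walk started at $\nu_{i-1}$ may backtrack arbitrarily far to the left before reaching $\nu_i$, we have $\mu_{i,\w}=\sum_{j=\nu_{i-1}+1}^{\nu_i}(1+2W_{j-1})$ with $W_{j-1}=\sum_{k\leq j-1}\Pi_{k,j-1}$ running over the entire environment to the left of $j$. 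Hence $\mu_{1,\w}$ and $\mu_{2,\w}$ both depend on $(\w_x)_{x<\nu_1}$ and are correlated (this is exactly the obstruction that forces the paper to introduce the reflected quantities $\tilde\mu_{i,\w}^{(n)}$ in the proof of Lemma \ref{liminf:Q} and to control covariances in Lemma \ref{lem:covbound}). The sequence is stationary under $Q$ and the one-dimensional tail \eqref{Qbtails} is as you state, but the classical Gnedenko--Kolmogorov stable limit theorem for i.i.d.\ summands does not apply as written, so \eqref{stablelim} does not follow from your argument.

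The dependence is weak (it decays like $E_Q[\Pi_{0,\nu_1-1}]^{k}$ across $k$ blocks), so the conclusion is true, but establishing it requires an extra ingredient. The paper's route is to quote the point-process convergence $\sum_{i=0}^{n-1}\delta_{\mu_{i,\w}/n^{1/\k}}\Rightarrow N$ from \cite[Proposition 5.1]{psWQLTn} (a result whose proof already handles this dependence), and then to push this through the discontinuous functional $\sum_i\delta_{x_i}\mapsto\sum_i x_i^3$ by a truncation at level $\d$, using the tail asymptotics \eqref{Qbtails} to show the contribution of the terms with $\mu_{i,\w}\leq\d n^{1/\k}$ is negligible as $\d\to0$. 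To repair your proof you would need either to invoke that point-process result, or to replace each $\mu_{i,\w}$ by a reflected version depending only on finitely many blocks (as in the proof of Lemma \ref{liminf:Q}), show the replacement error is negligible at scale $n^{3/\k}$, and only then apply the i.i.d.\ stable limit theorem.
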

	
	\begin{proof}
		It was shown in \cite[Proposition 5.1]{psWQLTn} that 
		$N_{n,\w} = \sum_{i=0}^{n-1} \d_{\mu_{i,\w}/n^{1/\k}} \Longrightarrow N$ as $n\to\infty$ under the measure $Q$ where $N$ is a non-homogeneous Poisson process on $(0,\infty)$ with intensity $C x^{-\k-1}$. 
Now, consider the functional $\phi$ on the space $\mathcal{M}_p$ of point processes on $(0,\infty]$ defined by $\phi(\sum_i \delta_{x_i}) = \sum_i x_i^3$. Then since $\k < 3$ it is standard that $\phi(N)$ is a $(\k/3)$-stable random variable. 
Thus, \eqref{stablelim} is the statement that $\phi(N_{n,\w}) \Rightarrow \phi(N)$. 
However, this doesn't follow immediately from $N_{n,\w} \Rightarrow N$ since the functional $\phi$ is not continuous with respect to the vague topology on $\mathcal{M}_p$. 
To fix this we do a truncation. That is, for any $\delta>0$ let $\phi_\delta$ be the functional on $\mathcal{M}_p$ defined by $\phi_\delta( \sum_i \delta_{x_i}) = \sum_i x_i^3 \ind{x_i > \d}$. The functional $\phi_\d$ is continuous on the set of point processes with no atoms at $\{\delta\}$ and since the Poisson point process $N$ belongs to this set with probability one, the continuous mapping theorem implies that 
$\phi_\d(N_{n,\w}) \Rightarrow \phi_\d(N)$. Since $\phi_\d(N) \to \phi(N)$ as $\d \to 0$, to complete the proof that $\phi(N_{n,\w}) \Rightarrow \Phi(N)$ we need only to show that 
\begin{equation}\label{trunc}
 \lim_{\d\to 0} \limsup_{n\to\infty} Q\left( |\phi(N_{n,\w}) - \phi_\d(N_{n,\w}) | > \e \right) = 0, \quad \forall \e>0. 
\end{equation}
To this end, note that 
\begin{align*}
 Q\left( |\phi(N_{n,\w}) - \phi_\d(N_{n,\w}) | > \e \right)
&= Q\left( \sum_{i=1}^n \mu_{i,\w}^3 \ind{\mu_{i,\w} \leq \d n^{1/\k}} > \e n^{3/\k} \right)  \\
&\leq \frac{1}{\e n^{3/\k}} n E_Q[ \mu_{1,\w}^3 \ind{\mu_{1,\w} \leq \d n^{1/\k}}]. 
\end{align*}
The tail asymptotics of $\mu_{1,\w}$ under $Q$ given in \eqref{Qbtails} imply that $ E_Q[ \mu_{1,\w}^3 \ind{\mu_{1,\w} \leq \d n^{1/\k}}] \sim \frac{K_\infty \k}{3-\k} n^{\frac{3}{\k}-1} \d^{3-\k}$. 
This is enough to imply \eqref{trunc} which completes the proof of the Lemma. 
\end{proof}

\begin{proof}[Proof of Lemma \ref{liminf:Q}]
We begin by briefly sketching how the proof will follow from Lemma \ref{limit:stable}. For any $\e>0$ and $n\geq 1$ let 
\begin{equation}\label{Bendef}
 B_{\e,n} = \left\{ \sum_{i=1}^n \mu_{i,\w}^3 < \e n^{3/\k} \right\}. 
\end{equation}
It follows from Lemma \ref{limit:stable} that $Q(B_{\e,n})$ is bounded away from zero for all $n$ large. Moreover, if $n' \gg n$ then the events $B_{\e,n'}$ and $B_{\e,n}$ are very weakly dependent and so we should expect that infinitely many of the events $B_{\e,n}$ occur for every $\e>0$. 

To make this argument precise we need to create some additional independence by modifying the environment in certain locations. 
For any $m \in \Z$ let $\w(m)$ be the environment $\w$ modified by placing a right directed reflection point at location $m$.  That is,
	\[\w(m)_x=\begin{cases}
	\w_x&,\quad x \neq m\\
	1&,\quad x=m. 
	\end{cases}\] 
Now define $\tilde{\mu}_{i,\w}^{(n)}$ to be a quenched expected crossing time from $\nu_i$ to $\nu_{i+1}$ with a reflection point located at $\nu_{i-1}-\fl{\sqrt{n}}-1$; that is,
	\[\tilde{\mu}_{i,\w}^{(n)}:=E_{\omega(\nu_{i-1}-\fl{\sqrt{n}}-1)}\left[T^{\nu_{i-1}}_{\nu_i}\right].\]
We first prove that there is not much difference between $\tilde{\mu}_{i,\w}^{(n)}$ and $\mu_{i,\w}^3$ in the sense that 
\begin{equation}\label{dif3}
 \lim_{n\to\infty} \frac{1}{n^{3/\k}} \sum_{i=1}^n \left( \mu_{i,\w}^3 - (\tilde\mu_{i,\w}^{(n)})^3 \right) = 0, \qquad Q\text{-a.s.}
\end{equation}
To see this, note first of all that $\mu_{i,\w} \geq \tilde\mu_{i,\w}^{(n)}$ and thus 
\[
 \left( \mu_{i,\w}^3 - (\tilde\mu_{i,\w}^{(n)})^3 \right) \leq 3 \mu_{i,\w}^2  \left( \mu_{i,\w} - \tilde\mu_{i,\w}^{(n)} \right). 
\]
Secondly, it can be shown that the added reflection in $\tilde\mu_{i,\w}^{(n)}$ is far enough to the left to not change much. In fact, it was shown in \cite[Lemma 5.3]{psWQLTn} that $Q( \mu_{i,\w} - \tilde{\mu}_{i,\w}^{(n)} > e^{-n^{1/4}} ) \leq C e^{-c \sqrt{n}}$ so that the Borel-Cantelli implies that 
 $Q$-a.s., for $n$ large enough, we have $\mu_{i,\w} - \mu_{i,\w}^{(n)} \leq e^{-n^{1/4}}$ for all $1\leq i\leq n$. 
Thus, we have that $Q$-a.s.\ for $n$ large enough that  
\[
\frac{1}{n^{3/\k}} \sum_{i=1}^n \left( \mu_{i,\w}^3 - (\tilde\mu_{i,\w}^{(n)})^3 \right)
\leq e^{-n^{1/4}} n^{1-3/\k} \left( \frac{1}{n} \sum_{i=1}^n  \mu_{i,\w}^2 \right) 
\]
It follows from Birkhoff's Ergodic Theorem (since $\k>2$) that the term in parenthesis on the right has a finite limit as $n\to\infty$. Since $e^{-n^{1/4}} n^{1-3/\k} \to 0$ this completes the proof of \eqref{dif3}. 

Due to \eqref{dif3}, to complete the proof of Proposition \ref{liminf:Q} we need only to show that 
\begin{equation}\label{tmu3lim}
 \liminf_{n\to\infty} \frac{1}{n^{3/\k}} \sum_{i=1}^n \left( \tilde\mu_{i,\w}^{(n)} \right)^3 = 0, \quad Q\text{-a.s.}
\end{equation}

In fact, we will show that the liminf is zero along the subsequence $n_k = 2^{2^k}$. 
To this end, for any fixed $\e>0$ and $k\geq 1$ let 
\[
 A_{\e,k}^1 = \left\{  \sum_{i=2n_{k-1} + 1}^{n_k} \left( \tilde\mu_{i,\w}^{(n_k)} \right)^3 < \e n_k^{3/\k} \right\} \text{ and } A_{\e,k}^2 = \left\{  \sum_{i= 1}^{2n_{k-1}} \left( \tilde\mu_{i,\w}^{(n_k)} \right)^3 < \e n_k^{3/\k} \right\}
\]
Due to the reflections added in the definition of $\tilde\mu_{i,\w}^{(n)}$ and the fact that $\sqrt{n_k} = n_{k-1}$, it follows that the event $A_{\e,k}^1$ depends only on the environment in the interval 
$[\nu_{2n_{k-1}}-n_{k-1}, \nu_{n_k}) \subset [\nu_{n_{k-1}}, \nu_{n_k} )$.
Thus, the sequence of events $\{A_{\e,k}^1 \}_{k\geq 1}$ are independent. 
Now, since the event $A_{\e,k}^1 \supset B_{\e,n_k}$ defined in \eqref{Bendef}, it follows from Lemma \ref{limit:stable}
that $\liminf_{k\to\infty} Q(A_{\e,k}^1) \geq \lim_{k\to \infty} Q(B_{\e,n_k}) > 0$.  
Since the events $A_{\e,k}^1$ are independent and have probabilities that are uniformly bounded away from zero we have that with probability 1 infinitely many of the events $A_{\e,k}^1$ occur. 
In regards to the event $A_{\e,k}^2$, 
note that since $n_{k-1}=n_k^{1/2}$ we have 
\[
 Q\left((A_{\e,k}^2)^c \right)\leq Q\left(\exists i\in[1,2n_{k-1}]:(\tilde\mu_{i,\w}^{(n_k)})^3 > \frac{\e n^{3/\k}}{2 n_{k-1}} \right)\leq 2n_k^{1/2} Q\left( \mu_{1,\w}^3 > \frac{\e}{2} n_k^{\frac{3}{\k}-\frac{1}{2} }\right) \leq C \e^{-\k/3} n_k^{-\frac{1}{2}+\frac{\k}{6}},
\]
for some $C<\infty$. Since $\k<3$ implies that the right side is summable in $k$, then the 
Borel-Cantelli Lemma implies that $Q$-a.s., all but finitely many of the events $A_{\e,k}^2$ occur. 
Thus, we can conclude that $Q$-a.s.\ infinitely many of the events $A_{\e,\k}^1 \cap A_{\e,\k}^2$ occur. 
Since this is true for any $\e>0$ this is enough to prove \eqref{tmu3lim}.
\end{proof}

\subsection{Upper bound when \texorpdfstring{$\k=3$}{kappa=3}}

We will now show how the argument above can be adapted to the case $\k=3$ to prove the upper bound in Theorem \ref{th:k3exact}. That is, we will show that when $\k=3$ there exists a constant $u >0$ such that 
\[
 \lim_{n\to\infty} P\left( \frac{\sqrt{n}}{\log n} \| F_{n,\w} - \Phi \|_\infty \leq u \right) = 1.
\]
To begin, it follows from \eqref{bound1} and \eqref{qVarlim} that we need only to show that 
\begin{equation}\label{k3suff2}
 \lim_{n\to\infty} P\left( \sum_{k=1}^n E_\w[\tau_k^3] \leq b n \log n \right) = 1, \quad \text{for some $b<\infty$.}
\end{equation}

We claim that \eqref{k3suff2} is implied by the same limit holding for the measure $Q$ on environments; that is, 
\begin{equation}\label{k3suff3}
  \lim_{n\to\infty} Q\left( \sum_{k=1}^{\nu_n} E_\w[\tau_k^3] \leq b n \log n \right)=1, \quad \text{for some } b<\infty. 
\end{equation}
That \eqref{k3suff3} implies \eqref{k3suff2} is justfied in a similar manner to the argument immediately following Proposition \ref{prop:uppermain2}. 
Indeed, $ \lim_{n\to\infty} Q\left( \sum_{k=1}^{\nu_n} E_\w[\tau_k^3] \leq b n \log n \right)=1$ implies that $\lim_{n\to\infty} \tilde{P}\left( \sum_{k=1}^{\nu_n} E_\w[\tau_k^3] \leq b n \log n \right)=1$ since $\tilde{P} \ll Q$, and this in turn implies that $\lim_{n\to\infty} P\left( \sum_{k=1}^{\nu_n} E_\w[\tau_k^3] \leq b n \log n \right)=1$ since we have $\sum_{k=1}^{\nu_n} E_\w[\tau_k^3] \leq \sum_{k=\nu_0 + 1}^{\nu_n} E_\w[\tau_k^3]$. Finally \eqref{k3suff2} follows for some $b<\infty$ since $\nu_n/n \to \bar{\nu}$, $P$-a.s.
To prove \eqref{k3suff3}, we use \eqref{decomp:thirdmoment} to bound $\sum_{k=1}^{\nu_n} E_\w[\tau_k^3]$ by $6 \sum_{i=1}^n \mu_{i,\w}^3$ plus some error terms which by Lemma \ref{lem:nonmain} are $o(n\log n)$, $Q$-a.s. 
Therefore, \eqref{k3suff3} will follow from
\begin{equation}\label{k3suff4}
 \lim_{n\to\infty} Q\left( \sum_{i=1}^n \mu_{i,\w}^3 \leq b n \log n \right) = 1, \quad \text{for some } b<\infty. 
\end{equation}
Let $r_n = n \sqrt{\log n}$ and note that \eqref{Qbtails} implies that since $\lim_{n\to\infty} Q\left( \max_{i\leq n} \mu_{i,\w}^3 > r_n \right) = 0$ then 
\eqref{k3suff3} will follow if we show that
\begin{equation}\label{k3suff5}
 \lim_{n\to\infty} Q\left( \sum_{i=1}^n \left( \mu_{i,\w}^3 \wedge r_n \right) > b n \log n \right) = 0, \quad \text{for some } b<\infty.
\end{equation} 
To show this, we will use that \eqref{Qbtails} implies that 
\begin{equation}\label{truncmv}
 E_Q[\mu_{1,\w}^3 \wedge r_n ] \sim K_\infty \log n \quad\text{and}\quad \Var_Q(\mu_{1,\w}^3 \wedge r_n) \sim 2K_\infty n \sqrt{\log n} \quad \text{as } n\to\infty
\end{equation}
and we will also use the following covariance bound. 
\begin{lem}\label{lem:covbound}
 If $\k=3$ and $r_n = n \sqrt{\log n}$, then there exist constants $C,c_0>0$ such that for $n$ sufficiently large 
\[
 \Cov_Q\left( \mu_{1,\w}^3 \wedge r_n, \mu_{1+k,\w}^3 \wedge r_n \right) \leq C   n^{3/2} (\log n)^{3/4} e^{-c_0 k}, \quad \forall k\geq 1. 
\]
\end{lem}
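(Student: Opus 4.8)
The plan is to compare $\mu_{1+k,\w}$ with the modified crossing time $\hat\mu_{1+k,\w} := E_{\w(\nu_1)}^{\nu_k}[T_{\nu_{1+k}}]$ obtained by inserting a right-directed reflection at the ladder location $\nu_1$, and to write $A := \mu_{1,\w}^3 \wedge r_n$, $B := \mu_{1+k,\w}^3 \wedge r_n$, $\hat B := \hat\mu_{1+k,\w}^3 \wedge r_n$. The point of the reflection is that, although the formula defining $\hat\mu_{1+k,\w}$ refers to the ladder locations $\nu_1,\nu_k,\nu_{1+k}$, by translation invariance of the walk $\hat\mu_{1+k,\w}$ is a deterministic function of the blocks $\mathfrak{B}_1,\dots,\mathfrak{B}_k$ alone; since $A$ is a function of $(\mathfrak{B}_j)_{j\le 0}$, under $Q$ we have $A \perp \hat B$. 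Hence $\Cov_Q(A,B) = \Cov_Q(A, B-\hat B)$, and because $A\ge 0$ and $0\le B-\hat B$,
\[
 \Cov_Q(A,B) \leq E_Q\bigl[A(B-\hat B)\bigr] + E_Q[A]\, E_Q[B-\hat B].
\]

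The heart of the argument is an exact decomposition of the error made by the reflection. Iterating the first-moment recursion $E_\w[\tau_i] = \tfrac{1}{\w_{i-1}} + \rho_{i-1} E_\w[\tau_{i-1}]$ (which underlies \eqref{QE}) for the environments $\w$ and $\w(\nu_1)$, and factoring the products $\Pi_{\ell,i-1}$ at the sites $\nu_1$ and $\nu_k$, gives
\[
 0 \leq \mu_{1+k,\w} - \hat\mu_{1+k,\w} = 2\bigl(1 + W_{\nu_1-1}\bigr)\, \Pi_{\nu_1,\nu_k-1}\, R_{\nu_k,\nu_{1+k}-1}.
\]
The three factors on the right are functions of $(\mathfrak{B}_j)_{j\le 0}$, of $\mathfrak{B}_1,\dots,\mathfrak{B}_{k-1}$, and of $\mathfrak{B}_k$ respectively, so under $Q$ they are independent; moreover $\Pi_{\nu_1,\nu_k-1} = \prod_{i=1}^{k-1} e^{V(\nu_{i+1})-V(\nu_i)}$ is a product of $k-1$ i.i.d.\ factors lying strictly in $(0,1)$ since the ladder heights are strict records. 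Using that $1+W_{\nu_1-1}$ has exponential tails under $Q$ (as exploited in the proof of Lemma \ref{lem:nonmain}) and that $E_Q[R_{\nu_k,\nu_{1+k}-1}]<\infty$ (which follows from $R_{\nu_k,\nu_{1+k}-1}\le (\nu_{1+k}-\nu_k) M_{k+1}$, \eqref{M}, and the standard finiteness of moments of block lengths under $Q$), I get a constant $c_0>0$, not depending on $n$, with
\[
 E_Q[\mu_{1+k,\w}-\hat\mu_{1+k,\w}] = 2\,E_Q[1+W_{\nu_1-1}]\,\bigl(E_Q[e^{V(\nu_2)-V(\nu_1)}]\bigr)^{k-1}\, E_Q[R_{\nu_k,\nu_{1+k}-1}] \leq C e^{-c_0 k}.
\]

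It then remains to insert this into the covariance bound and track the powers of $r_n$. From $\hat\mu_{1+k,\w}\le\mu_{1+k,\w}$, the elementary inequalities $x^3-y^3\le 3x^2(x-y)$ for $x\ge y\ge 0$ and $x\wedge r \le \sqrt{xr}$, and Cauchy--Schwarz using $E_Q[\mu_{1+k,\w}^2]<\infty$ (valid since $\k=3>2$), one obtains $0\le B-\hat B \le \sqrt 3\, r_n^{1/2}\,\mu_{1+k,\w}\,(\mu_{1+k,\w}-\hat\mu_{1+k,\w})^{1/2}$ and hence $E_Q[B-\hat B] \le C r_n^{1/2} e^{-c_0 k/2}$. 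Finally, \eqref{truncmv} gives $E_Q[A]\le C\log n$, the tail asymptotic \eqref{Qbtails} gives $E_Q[A^2] = E_Q[\mu_{1,\w}^6 \wedge r_n^2] \le C r_n$, and $B-\hat B\le r_n$, so
\[
 E_Q[A(B-\hat B)] \leq E_Q[A^2]^{1/2}\bigl(r_n E_Q[B-\hat B]\bigr)^{1/2} \leq C r_n^{5/4} e^{-c_0 k/4}, \qquad E_Q[A]E_Q[B-\hat B] \leq C(\log n)\, r_n^{1/2} e^{-c_0 k/2}.
\]
Since both $r_n^{5/4} = n^{5/4}(\log n)^{5/8}$ and $(\log n)\, r_n^{1/2} = n^{1/2}(\log n)^{5/4}$ are $O\bigl(n^{3/2}(\log n)^{3/4}\bigr)$, combining these yields $\Cov_Q(A,B)\le C n^{3/2}(\log n)^{3/4} e^{-c_0 k/4}$ for all $k\ge 1$ and all $n$ large, which is the assertion (with $c_0/4$ renamed $c_0$).

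I expect the main obstacle to be the exact error decomposition together with the attendant measurability bookkeeping: one has to iterate the crossing-time recursion carefully enough to recognize the clean triple product, and, crucially, verify that $\hat\mu_{1+k,\w}$ and each of the three factors really is measurable with respect to the asserted block $\sigma$-algebra despite the $\nu_\cdot$'s appearing in the formulas, since this is exactly what makes the factors $Q$-independent and produces genuine geometric decay in $k$. A softer backtracking estimate of the kind used for $\tilde\mu_{i,\w}^{(n)}$ in the proof of Lemma \ref{liminf:Q} would only give decay $e^{-c\sqrt k}$, which is not enough; everything after the decomposition is routine manipulation with the tail asymptotics already recorded in \eqref{M}, \eqref{Qbtails} and \eqref{truncmv}.
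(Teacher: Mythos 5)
Your proof is correct, and it rests on the same key device as the paper: inserting a right-directed reflection at $\nu_1$ so that the reflected crossing time $\hat\mu_{1+k,\w}$ becomes $Q$-independent of $\mu_{1,\w}$, and extracting geometric decay in $k$ from $E_Q[\mu_{1+k,\w}-\hat\mu_{1+k,\w}]$ via the block structure. The exact triple-product decomposition $\mu_{1+k,\w}-\hat\mu_{1+k,\w}=2(1+W_{\nu_1-1})\Pi_{\nu_1,\nu_k-1}R_{\nu_k,\nu_{1+k}-1}$ agrees with the paper's $2W_{\nu_1}\Pi_{\nu_1+1,\nu_k-1}R_{\nu_k,\nu_{k+1}-1}$ (note $W_{\nu_1}=\rho_{\nu_1}(1+W_{\nu_1-1})$), and your grouping into three factors measurable with respect to $(\mathfrak{B}_j)_{j\le 0}$, $\mathfrak{B}_1,\dots,\mathfrak{B}_{k-1}$, and $\mathfrak{B}_k$ respectively is actually the cleaner choice for reading off $Q$-independence, so your formula $E_Q[\mu_{1+k,\w}-\hat\mu_{1+k,\w}]=2E_Q[1+W_{\nu_1-1}]\bigl(E_Q[\Pi_{0,\nu_1-1}]\bigr)^{k-1}E_Q[R_{\nu_k,\nu_{1+k}-1}]$ is exact.

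The one place you diverge from the paper is the final bookkeeping. The paper proves a separate tail estimate (its Lemma \ref{lem:truncdiff}) of the form $Q(\mu_{k+1,\w}^3-\hat\mu_{k+1,\w}^3>e^{-c_1k})\le c_2e^{-c_3k}$, then feeds this through the pointwise bound $\mu_{k+1,\w}^3\wedge r_n\le(\hat\mu_{k+1,\w}^3\wedge r_n)+e^{-c_1k}+r_n\ind{\mu_{k+1,\w}^3-\hat\mu_{k+1,\w}^3>e^{-c_1k}}$ and Cauchy--Schwarz. You instead bound the truncated difference directly: from $B-\hat B\le\min(3\mu_{1+k,\w}^2(\mu_{1+k,\w}-\hat\mu_{1+k,\w}),r_n)\le\sqrt{3}\,r_n^{1/2}\mu_{1+k,\w}(\mu_{1+k,\w}-\hat\mu_{1+k,\w})^{1/2}$ followed by Cauchy--Schwarz and $E_Q[\mu_{1,\w}^2]<\infty$. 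This bypasses the intermediate lemma entirely. The exponential rate you get ($e^{-c_0k/4}$) is slightly worse than the paper's, but that is irrelevant to the statement. The power of $r_n$ you track ($r_n^{5/4}=n^{5/4}(\log n)^{5/8}$, dominating $(\log n)r_n^{1/2}$) is comfortably below the claimed $n^{3/2}(\log n)^{3/4}$, as you observe. In short: same core decomposition, a streamlined final estimate, and all the measurability claims about block $\sigma$-algebras that you flagged as the potential pitfall do hold exactly as you asserted.
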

\begin{rem}
We note that since the sequence $\mu_{i,\w}^3 \wedge r_n$ is stationary under $Q$, it follows from the variance asymptotics in \eqref{truncmv} that an easy bound on the covariances in Lemma \ref{lem:covbound} is $C n\sqrt{\log n}$.
The bound in Lemma \ref{lem:covbound} is only significantly better than this trivial bound for $k \gg \frac{1}{2c_0} \log n$. 
\end{rem}

Postponing the proof of Lemma \ref{lem:covbound} for the moment we show how to prove \eqref{k3suff5}. 
If $b>K_\infty$ we have that for $n$ sufficiently large 
\begin{align*}
&Q\left( \sum_{i=1}^n \left( \mu_{i,\w}^3 \wedge r_n \right) > b n \log n \right)\\
&\leq Q\left( \sum_{i=1}^n \left\{ \left( \mu_{i,\w}^3 \wedge r_n  \right) - n E_Q[\mu_{1,\w}^3 \wedge r_n ] \right\}  > \frac{b-K_\infty}{2} n \log n \right) \\
&\leq \frac{4}{(b-K_\infty)^2 n^2 \log^2 n} \Var_Q\left(  \sum_{i=1}^n \left( \mu_{i,\w}^3 \wedge r_n  \right) \right) \\
&\leq \frac{4}{(b-K_\infty)^2 n^2 \log^2 n} \left\{ \frac{1}{c_0}n\log n \Var_Q( \mu_{1,\w}^3 \wedge r_n ) + 2n \sum_{k=\fl{\frac{1}{2c_0} \log n} }^{n-1} \Cov_Q\left( \mu_{1,\w}^3 \wedge r_n, \mu_{1+k,\w}^3 \wedge r_n \right) \right\} \\
&= \bigo\left( \frac{1}{\sqrt{\log n}} \right) + \bigo\left( \frac{1}{(\log n)^{5/4}}\right), 
\end{align*}
where the second to last line follows from stationarity under $Q$ and the last line follows from \eqref{truncmv} and Lemma \ref{lem:covbound}. Thus, we have shown, pending the proof of Lemma \ref{lem:covbound}, that the limit in \eqref{k3suff5} holds for any $b>K_\infty$. 

\begin{proof}[Proof of Lemma \ref{lem:covbound}]
 Note that for $k\geq 1$
\begin{align*}
 \mu_{k+1,\w} &= \nu_{k+1}-\nu_k + 2 \sum_{j=\nu_k}^{\nu_{k+1}-1} W_j \\
&= \nu_{k+1}-\nu_k + 2 \sum_{j=\nu_k}^{\nu_{k+1}-1} (W_{\nu_1+1,j} + \Pi_{\nu_1+1,j} W_{\nu_1} ) \\
&=: \hat\mu_{k+1,\w} + 2 W_{\nu_1} \Pi_{\nu_1+1,\nu_k-1} R_{\nu_k,\nu_{k+1}-1}, 
\end{align*}
where $\hat\mu_{k+1,\w}$ is independent of $\s(\w_x, \, x\leq  \nu_1)$ and therefore independent of $\mu_{1,\w}$. 
(Note that we can interpret $\hat\mu_{k+1,\w}$ as the expected crossing time from $\nu_k$ to $\nu_{k+1}$ when the environment $\w$ is modified by adding a reflection at $\nu_1$.)
The following lemma shows that $\hat\mu_{k+1,\w}$ is very close to $\mu_{k+1,\w}$ when $k$ is large. 
\begin{lem}\label{lem:truncdiff}
 There exist constants $c_1,c_2,c_3>0$ such that 
\[
 Q\left( \mu_{i,\w}^3 - \hat\mu_{k+1,\w}^3 > e^{-c_1 k} \right)
\leq c_2 e^{-c_3 k}. 
\]
\end{lem}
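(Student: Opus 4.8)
The plan is to control the difference $\mu_{k+1,\w} - \hat\mu_{k+1,\w} = 2 W_{\nu_1}\Pi_{\nu_1+1,\nu_k-1} R_{\nu_k,\nu_{k+1}-1}$ by showing that each factor is with overwhelming probability not too large, while the product $\Pi_{\nu_1+1,\nu_k-1}$ of $\rho$'s along a long descending stretch of the potential is exponentially small in $k$. First I would use that $\mu_{i,\w}^3 - \hat\mu_{k+1,\w}^3 \leq 3\mu_{i,\w}^2(\mu_{i,\w}-\hat\mu_{k+1,\w})$ (since $\mu_{i,\w} \geq \hat\mu_{k+1,\w}$), so it suffices to bound $\mu_{k+1,\w}^2 \cdot W_{\nu_1}\Pi_{\nu_1+1,\nu_k-1} R_{\nu_k,\nu_{k+1}-1}$. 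The key observation is that $\Pi_{\nu_1+1,\nu_k-1} = e^{V(\nu_k-1) - V(\nu_1)}$ (approximately $e^{V(\nu_k)-V(\nu_1)}$), and since the $\nu_j$ are successive ladder locations each with $V(\nu_{j})<V(\nu_{j-1})$, the potential difference $V(\nu_1)-V(\nu_k)$ is a sum of $k-1$ i.i.d.\ positive increments under $Q$; hence $\Pi_{\nu_1+1,\nu_k-1} \leq e^{-c k}$ except on an event of probability at most $e^{-c'k}$ for suitable constants (by a standard large deviations / exponential Chebyshev estimate on $\sum_{j=2}^{k}(V(\nu_{j-1})-V(\nu_j))$, using that ladder-increments have some exponential moment, which follows from the exponential tail $Q(E_\w[\tau_0]>x)\leq Ce^{-cx}$ cited from \cite{pzSL1}, or directly from properties of the renewal structure).

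Next I would dispose of the other factors. The terms $W_{\nu_1}$, $R_{\nu_k,\nu_{k+1}-1}$, and $\mu_{k+1,\w}$ each have polynomial (or lighter) tails under $Q$: $W_{\nu_1}$ is dominated by $E_\w[\tau_0]/2$ which has exponential tails, $R_{\nu_k,\nu_{k+1}-1}$ is bounded by $R_{\nu_k}$ whose distribution under $Q$ has at worst a polynomial tail, and $\mu_{k+1,\w}$ has the $\k$-th power polynomial tail from \eqref{Qbtails}. Consequently, on the event where $\Pi_{\nu_1+1,\nu_k-1}\leq e^{-ck}$, we can afford to intersect with the event that each of $W_{\nu_1}$, $R_{\nu_k,\nu_{k+1}-1}$, and $\mu_{k+1,\w}$ is at most $e^{ck/100}$ (say), which fails with probability at most $Ce^{-c''k}$ by Markov's inequality applied to a small positive moment (finite by the above tail bounds and by Remark \ref{rem:annmom} / Lemma \ref{lem:qtmom}). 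On the intersection of these events we get $\mu_{k+1,\w}^2\, W_{\nu_1}\Pi_{\nu_1+1,\nu_k-1}R_{\nu_k,\nu_{k+1}-1} \leq e^{-ck}e^{4ck/100} \leq e^{-ck/2}$, so choosing $c_1 = c/2$ and collecting the exceptional probabilities gives the claimed bound with $c_2,c_3>0$.

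The main obstacle I anticipate is making the exponential decay of $\Pi_{\nu_1+1,\nu_k-1}$ rigorous and uniform: one must verify that the ladder-height increments $V(\nu_{j-1})-V(\nu_j)$ possess a finite exponential moment under $Q$ (not merely a finite mean), so that a Cram\'er-type bound yields $Q(\sum_{j=2}^k (V(\nu_{j-1})-V(\nu_j)) < ck) \leq e^{-c'k}$ for small enough $c>0$. This should be available from the standard renewal description of the potential's ladder structure and the known exponential tails in \cite{pzSL1}; alternatively one can bypass sharp large-deviation constants and simply use that the increments are i.i.d.\ positive with positive mean, so their partial sums exceed $ck$ with probability $1 - O(e^{-c'k})$ for $c$ below the mean. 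The remaining steps are routine union bounds and moment estimates, so once the exponential smallness of $\Pi_{\nu_1+1,\nu_k-1}$ is in hand the lemma follows.
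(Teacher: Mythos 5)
Your proposal is correct, and it shares the algebraic skeleton of the paper's argument — the decomposition $\mu_{k+1,\w}=\hat\mu_{k+1,\w}+2W_{\nu_1}\Pi_{\nu_1+1,\nu_k-1}R_{\nu_k,\nu_{k+1}-1}$ and the bound $\mu_{k+1,\w}^3-\hat\mu_{k+1,\w}^3\leq 3\mu_{k+1,\w}^2(\mu_{k+1,\w}-\hat\mu_{k+1,\w})$, with the event $\{3\mu_{k+1,\w}^2>e^{c_1k}\}$ discarded using the polynomial tail \eqref{Qbtails} — but it treats the difference $\mu_{k+1,\w}-\hat\mu_{k+1,\w}$ by a genuinely different mechanism. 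The paper simply applies Markov's inequality to this difference and computes its $Q$-mean in closed form: by the i.i.d.\ structure of the blocks between ladder locations under $Q$, the mean factors as $E_Q[W_0]E_Q[\Pi_{1,\nu_1-1}]E_Q[\Pi_{0,\nu_1-1}]^{k-2}E[R_{0,\nu_1-1}]$, and $E_Q[\Pi_{0,\nu_1-1}]<1$ delivers the exponential decay in $k$ in one stroke, with no need to control any factor pathwise. Your route instead builds a high-probability event on which each factor is controlled separately, with a lower-tail large-deviation estimate for the i.i.d.\ ladder-height increments giving $\Pi_{\nu_1+1,\nu_k-1}\leq e^{-ck}$. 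This works, and the obstacle you flag is not actually an obstacle: for the \emph{lower} tail of a sum of i.i.d.\ strictly positive increments no exponential moment is required, since the Chernoff bound $Q\bigl(\sum_{j=2}^k(V(\nu_{j-1})-V(\nu_j))<ck\bigr)\leq e^{\lambda ck}\,E_Q[e^{-\lambda(V(\nu_0)-V(\nu_1))}]^{k-1}$ uses only the bounded nonnegative variable $e^{-\lambda X}$, and the right side is $<1$ per step for small $\lambda$ whenever $c<E_Q[V(\nu_0)-V(\nu_1)]$ (equivalently, one can just apply Markov to $\Pi_{\nu_1+1,\nu_k-1}$ itself and use $E_Q[\Pi_{0,\nu_1-1}]<1$). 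The trade-off is that your version costs a few extra routine moment estimates — small positive moments of $W_{\nu_1}$, $R_{\nu_k,\nu_{k+1}-1}$ and $\mu_{k+1,\w}$ under $Q$, all available from Lemma \ref{lem:qtmom}, \eqref{Qbtails} and the cited bounds from \cite{pzSL1} — whereas the paper's exact factorization of the expectation makes the proof shorter; both yield constants $c_1,c_2,c_3>0$ as claimed.
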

\begin{proof}
 Since  $\mu_{k+1,\w}^3 - \hat\w_{k+1,\w}^3 \leq 3(\mu_{k+1,\w} - \hat\mu_{k+1,\w} ) \mu_{k+1,\w}^2$ we have that 
\begin{align}
 Q\left(  \mu_{k+1,\w}^3 - \hat\mu_{k+1,\w}^3 > e^{-c_1 k} \right) 
&\leq Q\left( \mu_{k+1,\w} - \hat\mu_{k+1,\w} > e^{-2c_1 k} \right) + Q\left( 3 \mu_{1,\w}^2 > e^{c_1 k} \right) \nonumber  \\
&\leq E_Q\left[ \mu_{k+1,\w} - \hat\mu_{k+1,\w} \right] e^{2c_1 k} + C e^{-(c_1\k/2) k}. \label{m3hm3tail} 
\end{align}
Now, it follows from the definition of $\hat\mu_{k+1,\w}$ and the fact that the blocks of the environment between ladder locations are i.i.d.\ under $Q$ that 
\[
 E_Q\left[ \mu_{k+1,\w} - \hat\mu_{k+1,\w} \right] = E_Q[W_0]E_Q[\Pi_{1,\nu_1-1}] E_Q[\Pi_{0,\nu_1-1}]^{k-2} E[R_{0,\nu_1-1}]
\leq E_Q[W_0]E[\Pi_{0,\nu_1-1}]^{k-2} E[R_0].
\]
Since $E[R_0] < \infty$ when $\k>1$, \cite[Lemma 2.2]{pzSL1} implies $E_Q[W_0]$, and $E_Q[\Pi_{0,\nu_1-1}] < 1$ by the definition of the ladder locations, it follows that if we choose $c_1>0$ small enough then \eqref{m3hm3tail} decreases exponentially in $k$. 
\end{proof}

To control the covariance note that
\begin{align*}
\mu_{k+1,\w}^3 \wedge r_n
&\leq  ( \hat\mu_{k+1,\w}^3\wedge r_n ) + e^{-c_1 k}  + r_n \ind{ \mu_{k+1,\w}^3 - \hat\mu_{k+1,\w}^3 > e^{-c_1 k} },
\end{align*}
and since $\hat\mu_{k+1,\w}$ is independent of $\mu_{1,\w}$, it follows that 
\begin{align*}
& E_Q[(\mu_{1,\w}^3 \wedge r_n)(\mu_{k+1,\w}^3 \wedge r_n)]\\
&\leq E_Q[ \mu_{1,\w}^3\wedge r_n]\left( E_Q[\hat\mu_{k+1,\w}^3\wedge r_n] + e^{-c_1 k} \right) 
+ r_n E_Q\left[ (\mu_{1,\w}^3 \wedge r_n) \ind{ \mu_{k+1,\w}^3 - \hat\mu_{k+1,\w}^3 > e^{-c_1 k} } \right] \\
&\leq   E_Q[ \mu_{1,\w}^3 \wedge r_n]\left( E_Q[ \mu_{1,\w}^3\wedge r_n ] + e^{-c_1 k} \right) + r_n \sqrt{ E_Q[  (\mu_{1,\w}^3 \wedge r_n)^2 ] } \sqrt{ Q\left( \mu_{k+1,\w}^3 - \hat\mu_{k+1,\w}^3 > e^{-c_1 k}  \right) }\\
&\leq E_Q[ \mu_{1,\w}^3\wedge r_n]^2 + C \log n e^{-c_1 k} + C n^{3/2} (\log n)^{3/4} e^{-(c_3/2) k},
\end{align*}
where in the last inequality we used \eqref{truncmv} and Lemma \ref{lem:truncdiff}. 
Thus, it follows that  
\[
 \Cov_Q( \mu_{1,\w}\wedge r_n, \, \mu_{k+1,\w} \wedge r_n ) \leq C n^{3/2} (\log n)^{3/4} e^{-c_0 k}, \quad \forall k\geq 1,
\]
for some $C,c_0>0$ and $n$ large enough. 
\end{proof}

\section{Lower bound on rates of convergence}

The starting point for our lower bound is the following Theorem for Normal approximation of sums of independent random variables. 
The precise statement in this Theorem is new, though as will be seen in the proof it arises easily from the proof of a similar lower bound in \cite{hbReversing}. 

\begin{thm}\label{th:NCLTlb}
 Let $\xi_1,\xi_2,...,\xi_n$ be independent random variables which have zero means and finite 5-th moments; that is $E[|\xi_i|^5] < \infty$ for all $i$. Assume that $\sum_{i=1}^n E[\xi_i^2] = 1$. 
Then, there exists an absolute constant $C>0$ such that 
\[
 C\left(\sup_z|P(W\leq z)-\Phi(z)|+\sum_{i=1}^n E[\xi_i^2]^2\right)
\geq \frac{1}{4} \left| \sum_{i=1}^n E\left[ \xi_i^3 \right] \right| - \frac{1}{32} \sum_{i=1}^n E[|\xi_i|^5]. 
\]
\end{thm}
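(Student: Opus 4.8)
The plan is to follow the Stein's-method argument behind the lower bound in \cite{hbReversing}, adjusting the test function so that the error involves only the moment quantities in the statement, with the indicated numerical constants. Write $W=\sum_{i=1}^n\xi_i$ and $W^{(i)}=W-\xi_i$, let $Z$ be standard normal (so $\Var(W)=\sum_iE[\xi_i^2]=1$), and fix an \emph{even} Schwartz function $\psi$ with $\psi''(0)\neq 0$. For a small parameter $\lambda>0$ to be chosen last, set $f(w)=\lambda^{-2}\psi(\lambda w)$ and $h(w)=f'(w)-wf(w)$. Since $f$ is Schwartz, Stein's identity gives $E[h(Z)]=E[f'(Z)-Zf(Z)]=0$, and integrating by parts (using that $h$ is Schwartz and $P(W\le z)-\Phi(z)\to 0$ as $z\to\pm\infty$) gives
\begin{equation*}
|E[h(W)]| = \Bigl|\int_\R h'(z)\bigl(\Phi(z)-P(W\le z)\bigr)\,dz\Bigr| \le \|h'\|_{L^1}\,\sup_z|P(W\le z)-\Phi(z)|.
\end{equation*}

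The core step is to expand $E[h(W)]=E[f'(W)-Wf(W)]$ and isolate the term proportional to $\sum_iE[\xi_i^3]$. Using independence one has $E[Wf(W)]=\sum_iE[\xi_i(f(W)-f(W^{(i)}))]$ and $E[f'(W)]=\bigl(\sum_iE[\xi_i^2]\bigr)E[f'(W)]$; Taylor-expanding $f(W)-f(W^{(i)})$ in powers of $\xi_i$ about $W^{(i)}$ to third order, performing the analogous expansion for $E[f'(W)]$, and collecting terms, one obtains an identity of the form
\begin{equation*}
E[f'(W)-Wf(W)] = -\tfrac12\Bigl(\sum_{i=1}^nE[\xi_i^3]\Bigr)E[f''(W)] + \sum_{i=1}^n\Bigl(\tfrac12E[\xi_i^2]^2-\tfrac16E[\xi_i^4]\Bigr)E[f'''(W^{(i)})] + \mathrm{Rem},
\end{equation*}
where $\mathrm{Rem}$ gathers the Lindeberg-type replacement errors (from passing between $W^{(i)}$ and $W$) and a Taylor remainder, each bounded by $C(\|f^{(3)}\|_\infty+\|f^{(4)}\|_\infty)\sum_i\bigl(E[\xi_i^2]^2+E[|\xi_i|^5]\bigr)$; here one uses $E[\xi_i^4]\le E[\xi_i^2]+E[|\xi_i|^5]$, $E[|\xi_i|^3]E[\xi_i^2]\le E[\xi_i^2]^2+E[|\xi_i|^5]$, and $E[\xi_i^2]\le 1$. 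The subtle term is the middle sum, and handling it is the ``simple modification'' of \cite{hbReversing}: since $\psi$ is even, $f'''$ is odd, so $E[f'''(Z)]=0$, and
\begin{equation*}
\bigl|E[f'''(W^{(i)})]\bigr| = \bigl|E[f'''(W^{(i)})]-E[f'''(Z)]\bigr| \le C\|f^{(4)}\|_\infty\,E[\xi_i^2] + \|f^{(4)}\|_{L^1}\,\sup_z|P(W\le z)-\Phi(z)|,
\end{equation*}
so the middle sum is also controlled by a multiple of $\sup_z|P(W\le z)-\Phi(z)|+\sum_iE[\xi_i^2]^2+\sum_iE[|\xi_i|^5]$. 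The point of the rescaling $f(w)=\lambda^{-2}\psi(\lambda w)$ is that the derivative-norms of $f$ occurring in all of these error estimates are $O(\lambda)$ as $\lambda\to 0$, whereas $\|h'\|_{L^1}$ is fixed and $E[f''(W)]=\psi''(0)+O(\lambda^2)+O\bigl(\sup_z|P(W\le z)-\Phi(z)|\bigr)$ stays of order one.

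Combining the two displays and choosing $\lambda$ small enough that the resulting coefficient of $\sum_iE[|\xi_i|^5]$ drops to $\tfrac1{32}$ then gives
\begin{equation*}
\tfrac14\Bigl|\sum_iE[\xi_i^3]\Bigr| - \tfrac1{32}\sum_iE[|\xi_i|^5] \le C\Bigl(\sup_z|P(W\le z)-\Phi(z)| + \sum_iE[\xi_i^2]^2\Bigr),
\end{equation*}
valid as long as $\sup_z|P(W\le z)-\Phi(z)|$ is below an absolute threshold (so that $|E[f''(W)]|\ge\tfrac12|\psi''(0)|>0$ and one may divide). The complementary range, where $\sup_z|P(W\le z)-\Phi(z)|$ exceeds that threshold, is trivial once one notes that the left-hand side above is bounded by an absolute constant: Cauchy--Schwarz gives $\bigl|\sum_iE[\xi_i^3]\bigr|\le\bigl(\sum_iE[\xi_i^4]\bigr)^{1/2}$, while $L^2$--$L^5$ interpolation together with H\"older's inequality and $\sum_iE[\xi_i^2]=1$ gives $\sum_iE[\xi_i^4]\le\bigl(\sum_iE[|\xi_i|^5]\bigr)^{2/3}$, so with $S=\sum_iE[|\xi_i|^5]$ the left-hand side is at most $\max_{S\ge 0}\bigl(\tfrac14S^{1/3}-\tfrac1{32}S\bigr)<\infty$, and enlarging $C$ covers this case. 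The main obstacle is the middle sum in the second display: keeping the fourth moments from producing a non-vanishing additive constant, which is exactly what the evenness of $\psi$ and the slowly-varying scaling $f(w)=\lambda^{-2}\psi(\lambda w)$ are designed to achieve.
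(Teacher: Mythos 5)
Your argument is a genuinely different route from the paper's. The paper's proof is extremely short: it quotes equation (7) of \cite{hbReversing} with the explicit choice $f(w)=e^{-w^2/2}$, which directly yields the bound $C\bigl(\sup_z|P(W\le z)-\Phi(z)|+\sum_iE[\xi_i^2]^2\bigr)\ge\bigl|\sum_iE[\xi_i(1-e^{-\xi_i^2/4})]\bigr|$, and then finishes in two lines using the elementary Taylor inequality $\bigl|x(1-e^{-x^2/4})-\tfrac14x^3\bigr|\le\tfrac1{32}|x|^5$, which is also where the specific constants $\tfrac14$ and $\tfrac1{32}$ come from. In contrast, you re-derive the core Stein's-method estimate of \cite{hbReversing} from scratch, with a rescaled even Schwartz test function $f(w)=\lambda^{-2}\psi(\lambda w)$, handle the fourth-moment term via the evenness of $\psi$ (so that $E[f'''(Z)]=0$), and close the argument by a case split on the size of $\sup_z|P(W\le z)-\Phi(z)|$. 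This is self-contained (it does not depend on unpacking the reference) but is considerably longer and does not produce the constants $\tfrac14$ and $\tfrac1{32}$ as transparently — they emerge only after tuning $\lambda$ and normalizing $\psi''(0)$, whereas the paper's Gaussian choice gives them for free. A couple of details in your sketch deserve tightening: the bound on $|E[f'''(W^{(i)})]-E[f'''(Z)]|$ should involve $\|f^{(4)}\|_\infty E[|\xi_i|]$ (or $E[\xi_i^2]^{1/2}$) rather than $E[\xi_i^2]$, and the reduction of $\sum_iE[\xi_i^4]E[|\xi_i|]$ to $\sum_iE[\xi_i^2]^2+\sum_iE[|\xi_i|^5]$ requires the slightly finer chain $E[\xi_i^4]E[|\xi_i|]\le E[\xi_i^2]^{5/6}E[|\xi_i|^5]^{2/3}\le\tfrac13E[\xi_i^2]^{5/2}+\tfrac23E[|\xi_i|^5]\le\tfrac13E[\xi_i^2]^2+\tfrac23E[|\xi_i|^5]$ (using $E[\xi_i^2]\le1$), not merely $E[|\xi_i|]\le1$, which would otherwise leave a non-absorbable absolute $O(\lambda^2)$ constant. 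With those repairs the argument goes through, and it buys you a proof that does not lean on a black-box citation, at the cost of substantially more work.
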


\begin{proof}
 It follows from computations in \cite{hbReversing} that there exists an absolute constant $C>0$ such that\footnote{To obtain \eqref{lb1}, apply equation (7) in \cite{hbReversing} with the choice of the function $f(w) = e^{-w^2/2}$.}
\begin{equation}\label{lb1}
 C\left(\sup_z|P(W\leq z)-\Phi(z)|+\sum_{i=1}^n E[\xi_i^2]^2\right)
\geq \left| \sum_{i=1}^n E\left[ \xi_i(1-e^{-\xi_i^2/4}) \right] \right|. 
\end{equation}
Since $1-\frac{x^2}{4} \leq e^{-x^2/4} \leq 1-\frac{x^2}{4} + \frac{x^4}{32}$, it follows that 
\[
 \left| x(1-e^{-x^2/4}) - \frac{1}{4} x^3 \right| \leq \frac{1}{32} |x|^5, \qquad \forall x\in\R. 
\]
Applying this to \eqref{lb1} we obtain that 
\begin{align*}
  C\left(\sup_z|P(W\leq z)-\Phi(z)|+\sum_{i=1}^n E[\xi_i^2]^2\right)
&\geq \left|\sum_{i=1}^n \frac{1}{4} E[\xi_i^3]  \right| - \sum_{i=1}^n E\left[ \left| \xi_i(1-e^{-\xi_i^2/4}) - \frac{1}{4} \xi_i^3 \right| \right] \\
&\geq  \frac{1}{4} \left|\sum_{i=1}^n E[\xi_i^3]  \right| - \frac{1}{32} \sum_{i=1}^n E[|\xi_i|^5]. 
\end{align*}

\end{proof}

Applying Theorem \ref{th:NCLTlb} to the random variables $\xi_i = \frac{\tau_i - E_\w[\tau_i]}{\sqrt{\Var_\w(T_n)}}$ gives 
\begin{align}
 \| F_{n,\w} - \Phi \|_\infty 
&\geq \frac{C}{\Var_\w(T_n)^{3/2}} \sum_{i=1}^n E_\w\left[ (\tau_i - E_\w[\tau_i])^3 \right]  \label{lbmain} \\
&\qquad  - \frac{C'}{\Var_\w(T_n)^2}  \sum_{i=1}^n \left( \Var_\w(\tau_i) \right)^2 - \frac{C'}{\Var_\w(T_n)^{5/2}} \sum_{i=1}^n E_\w\left[ |\tau_i - E_\w[\tau_i]|^5 \right]. \label{lbother}
\end{align}
We will see below that the term on the right in \eqref{lbmain} is the main term and that the two terms in \eqref{lbother} are negligible on the scale of $n^{\frac{3}{\k} - \frac{3}{2} }$. 
We begin by showing that the terms in \eqref{lbother} are negligible. 
\begin{lem}\label{lem:45small}
 If $\k >2$ then 
\[
  \lim_{n\to\infty} n^{\frac{3}{2}-\frac{3}{\k}} \left( \frac{1}{\Var_\w(T_n)^2}  \sum_{i=1}^n \left( \Var_\w(\tau_i) \right)^2 + \frac{1}{\Var_\w(T_n)^{5/2}} \sum_{i=1}^n E_\w\left[ |\tau_i - E_\w[\tau_i]|^5 \right] \right) = 0, \quad P\text{-a.s.}
\]
\end{lem}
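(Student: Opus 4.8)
The plan is to reduce both sums to sums over the blocks between ladder locations, and then invoke ergodicity under $Q$ together with the tail asymptotics \eqref{Qbtails} and \eqref{M} to identify the relevant polynomial orders. Recall that $\Var_\w(T_n) \sim \s^2 n$ by \eqref{qVarlim}, so $\Var_\w(T_n)^2 \sim \s^4 n^2$ and $\Var_\w(T_n)^{5/2} \sim \s^5 n^{5/2}$; since $3/2 - 3/\k < 0$ when $\k>2$, the factor $n^{3/2-3/\k}$ is a negative power of $n$, and it suffices to show that $\sum_{i=1}^n (\Var_\w(\tau_i))^2 = o(n^{1/2 + 3/\k})$ and $\sum_{i=1}^n E_\w[|\tau_i - E_\w[\tau_i]|^5] = o(n^{1+3/\k})$, $P$-a.s. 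As in the proof of Proposition \ref{prop:uppermain}, it is enough to prove these bounds for the partial sums up to $\nu_n$ under the measure $Q$, since $\tilde P \ll Q$, $\nu_n/n \to E[\nu_1]$, and adding the finitely many terms between $\nu_0$ and $0$ changes nothing.

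For the first sum, I would group $\sum_{i=1}^{\nu_n}(\Var_\w(\tau_i))^2$ into blocks and dominate the block sum $\sum_{j=\nu_{i-1}+1}^{\nu_i}(\Var_\w(\tau_j))^2$ by a power of the block quantities; the cleanest route is to bound it by $\bigl(\sum_{j=\nu_{i-1}+1}^{\nu_i}\Var_\w(\tau_j)\bigr)^2 \leq (\sigma^2_{i,\w})^2$ (using that the summands are nonnegative and $\Var_\w(\tau_j) \leq \Var_\w(T_{\nu_i}-T_{\nu_{i-1}})$ for each $j$ in the block, plus $\nu_i - \nu_{i-1}$ being the number of terms — actually $\sum a_j^2 \le (\sum a_j)^2$ for nonnegative $a_j$ directly). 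Then $\sum_{i=1}^{\nu_n}(\Var_\w(\tau_i))^2 \leq \sum_{i=1}^n (\sigma^2_{i,\w})^2$, and since $(\sigma^2_{i,\w})^2$ is a stationary ergodic sequence under $Q$ with $Q(\sigma^2_{1,\w} > x) \sim K_\infty x^{-\k/2}$ by \eqref{Qbtails}, we get $Q((\sigma^2_{1,\w})^2 > x) \sim K_\infty x^{-\k/4}$, so $(\sigma^2_{1,\w})^2$ has finite moments of all orders $p < \k/4$. Corollary \ref{cor:secandthird}-type reasoning (a stationary ergodic sequence with finite $p$-th moment for all $p<\a$ has partial sums $o(n^{1/\a + \e})$) then gives $\sum_{i=1}^n(\sigma^2_{i,\w})^2 = o(n^{4/\k + \e})$; since $4/\k < 1/2 + 3/\k$ iff $1/\k < 1/2$ iff $\k > 2$, choosing $\e$ small enough finishes this term.

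For the fifth-moment sum I would proceed analogously, first using $E_\w[|\tau_i - E_\w[\tau_i]|^5] \leq C E_\w[\tau_i^5]$ (as in the elementary bound preceding \eqref{bound1}) and then establishing a block inequality of the form $\sum_{j=\nu_{i-1}+1}^{\nu_i} E_\w[\tau_j^5] \leq C(\mu_{i,\w}^5 + \text{error terms depending on the block to the left})$, by iterating a recursion for $E_\w[\tau_i^5]$ exactly as Lemma \ref{lem:Et3sum} was derived from Lemma \ref{lem:thirdmoment}; the footnote after \eqref{recursive} already asserts such recursions exist for every moment $m$. Summing over blocks and discarding the lower-order pieces via a Lemma \ref{lem:nonmain}-style argument, the dominant contribution is $\sum_{i=1}^n \mu_{i,\w}^5$, which is stationary ergodic under $Q$ with $Q(\mu_{1,\w}^5 > x) \sim K_\infty x^{-\k/5}$ by \eqref{Qbtails}; hence $\mu_{1,\w}^5$ has finite moments of all orders $p < \k/5$ and $\sum_{i=1}^n \mu_{i,\w}^5 = o(n^{5/\k + \e})$. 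Since $5/\k < 1 + 3/\k$ iff $2/\k < 1$ iff $\k>2$, this closes the estimate. The main obstacle is bookkeeping: writing down the correct recursion for $E_\w[\tau_i^5]$ and verifying that all the lower-order block-boundary terms (the analogues of $\mu_{i,\w}$, $\mu_{i,\w}\sigma^2_{i,\w}$, $\mu_{i,\w}^3$, and $R_{\nu_{i-1},\nu_i-1}E_\w[\tau_{\nu_{i-1}}^5]$-type remainders) have enough moments under $Q$ to be $o(n^{1+3/\k})$ — but each such check is of the same flavor as \eqref{finmean} and uses only \eqref{Qbtails}, \eqref{M}, Hölder, and the i.i.d. block structure under $Q$.
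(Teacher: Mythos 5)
Your proposal is correct in outline, but it takes a considerably heavier route than the paper. You reduce, exactly as the paper does, to showing $\sum_{i=1}^n (\Var_\w(\tau_i))^2 = o(n^{1/2+3/\k})$ and $\sum_{i=1}^n E_\w[|\tau_i-E_\w[\tau_i]|^5] = o(n^{1+3/\k})$, and you land on the same critical exponents $4/\k$ and $5/\k$ with the same comparison to $\k>2$. But you then route everything through the ladder-location blocks under $Q$ (bounding block sums by $(\sigma^2_{i,\w})^2$, and for the fifth moment proposing to derive and iterate a recursion for $E_\w[\tau_i^5]$ in the style of Lemma \ref{lem:Et3sum}, with all the attendant boundary terms). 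None of that machinery is needed here: the per-site quantities $(\Var_\w(\tau_i))^2$ and $E_\w[|\tau_i-E_\w[\tau_i]|^5]$ already form stationary ergodic sequences under $P$ itself (the environment is i.i.d.), and Lemma \ref{lem:qtmom} directly supplies $E\bigl[(E_\w[T_1^2]^2)^p\bigr]<\infty$ for $p<\k/4$ and $E\bigl[E_\w[T_1^5]^p\bigr]<\infty$ for $p<\k/5$. The paper then applies a one-line ergodic estimate (for a stationary ergodic sequence with finite $p$-th moments for all $p<\a$, partial sums are $o(n^\gamma)$ for every $\gamma>1/\a$ -- this is the Lemma \ref{lem:ergsums} stated inside the proof; the ``Corollary'' you cite is only present in a commented-out block) and is done. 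The block decomposition and the $Q$-to-$P$ transfer are genuinely necessary elsewhere in the paper (for the \emph{liminf} in Proposition \ref{prop:uppermain}, where one must exhibit a good subsequence), but for a plain a.s.\ limit of nonnegative sums they buy you nothing, and your fifth-moment recursion in particular would cost several pages of bookkeeping to certify error terms that the direct moment bound dispenses with immediately.
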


\begin{proof}
 Since \eqref{qVarlim} implies that  $\Var_\w(T_n)$ grows linearly, $P$-a.s., it is enough to show that 
\begin{equation}\label{var2}
 \lim_{n\to\infty} \frac{1}{n^{\frac{1}{2}+\frac{3}{\k}}} \sum_{i=1}^n  \left( \Var_\w(\tau_i) \right)^2 = 0, \quad P\text{-a.s.},
\end{equation}
and 
\begin{equation}\label{5msum}
  \lim_{n\to\infty} \frac{1}{n^{1+\frac{3}{\k}}} \sum_{i=1}^n  E_\w\left[ |\tau_i - E_\w[\tau_i]|^5 \right]   = 0, \quad P\text{-a.s.}
\end{equation}
To control the sums in \eqref{lbother} we will use the following simple Lemma.
\begin{lem}\label{lem:ergsums}
Suppose that $\{\zeta_i\}_{i\geq 1}$ is a stationary and ergodic sequence of random variables. Let $\a \in (0,1]$ and suppose that $E[|\zeta_i|^p] < \infty$ for all $p \in (0,\a)$. Then,
for any $\gamma > \frac{1}{\a}$, 
\[
\lim_{n\to\infty}  \frac{1}{n^{\gamma}} \sum_{i=1}^n \zeta_i = 0, \quad P\text{-a.s.}
\]
\end{lem}
\begin{proof}
If $p \in (0,1)$, then 
\[
  \left| \frac{1}{n^{1/p}} \sum_{i=1}^n \zeta_i \right|
  \leq  \frac{1}{n^{1/p}} \sum_{i=1}^n \left|\zeta_i \right|
  = \left(\frac{1}{n} \left(\sum_{i=1}^n |\zeta_i|\right)^p\right)^{1/p}
  \leq \left(\frac{1}{n} \sum_{i=1}^n |\zeta_i|^p\right)^{1/p}.
\]
Since $\{\zeta_i\}_{i\geq 1}$ is a stationary and ergodic sequence, if $p<\a$ then the last term on the right converges to $\left( E[|\zeta_1|^p] \right)^{1/p} < \infty$ as $n\to\infty$.
Thus, with probability 1 the sum $\sum_{i=1}^n \zeta_i$ grows no faster than $n^{1/p}$ for any $p<\a$. 
If $\gamma > \frac{1}{\a}$ then the conclusion of the lemma follows by choosing $p \in (1/\gamma,\a)$. 
\end{proof}
Note that the terms inside the sums in \eqref{var2} and \eqref{5msum} are ergodic sequences under the measure $P$. 
Therefore, by Lemma \ref{lem:qtmom} we can apply Lemma \ref{lem:ergsums} to the sum in \eqref{var2} with $\a=\frac{\k}{4}$ and to the sum in \eqref{5msum} with $\a=\frac{\k}{5}$. 
Since $\k>2$ implies that $\frac{1}{2} + \frac{3}{\k} > \frac{4}{\k}$ and $1+\frac{3}{\k} > \frac{5}{\k}$ this completes the proof of \eqref{var2} and \eqref{5msum}, and thus also of the lemma. 
\end{proof}

It remains now to give a lower bound on the sum in the right side of \eqref{lbmain}.
Again, using the fact that $\Var_\w(T_n)$ grows linearly, $P$-a.s., the proof of \eqref{LS} in Theorem \ref{th:ExactRates} will follow if we can show
\begin{prop}
\begin{equation}\label{LSc3m}
 \limsup_{n\to\infty} \frac{1}{n^{3/\k}} \sum_{i=1}^n E_\w\left[ (\tau_i - E_\w[\tau_i])^3 \right]  = \infty, \quad P\text{-a.s.}
\end{equation}
\end{prop}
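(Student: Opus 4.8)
The plan is to deduce \eqref{LSc3m} from the fact that, block by block, the quenched third central moments of crossing times are bounded below by a constant multiple of $M_i^3$, so that a single large ladder block already forces the partial sum to be large; the almost sure statement then comes from the second Borel--Cantelli lemma applied to the i.i.d.\ heavy-tailed sequence $\{M_i\}$.

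First I would record a lower bound for $c_j:=E_\w[(\tau_j-E_\w[\tau_j])^3]$. Using the excursion decomposition $\tau_j\overset{d}{=}1+\sum_{l=1}^{G_j}\bigl(1+\tau_{j-1}^{(l)}\bigr)$ (the $\tau_{j-1}^{(l)}$ i.i.d.\ copies of $\tau_{j-1}$, $G_j$ geometric on $\{0,1,2,\dots\}$ with $P(G_j=k)=\w_{j-1}(1-\w_{j-1})^k$, all independent), the third-cumulant formula for a random sum yields
\[
 c_j=\kappa_3(G_j)(1+E_\w[\tau_{j-1}])^3+3\Var(G_j)(1+E_\w[\tau_{j-1}])\Var_\w(\tau_{j-1})+\rho_{j-1}c_{j-1},
\]
where $\kappa_3(G_j)=\rho_{j-1}(1+\rho_{j-1})(1+2\rho_{j-1})\ge 2\rho_{j-1}^3$ and $\Var(G_j)=\rho_{j-1}(1+\rho_{j-1})\ge0$. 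Iterating leftward, with the remainder $\Pi_{j-N,j-1}c_{j-N}\to0$ as $N\to\infty$ (exactly as in the iteration behind \eqref{recursive}), displays $c_j$ as a convergent sum of nonnegative terms; thus $c_j\ge0$ for all $j$, and, keeping only the leading term, $c_j\ge\kappa_3(G_j)(1+E_\w[\tau_{j-1}])^3$. Next, since third central moments of the independent $\tau_j$ add, $\gamma_{i,\w}:=\sum_{j=\nu_{i-1}+1}^{\nu_i}c_j=E_\w^{\nu_{i-1}}[(T_{\nu_i}-\mu_{i,\w})^3]$, and dropping all summands except the one at the crest site $j^\dagger_i\in(\nu_{i-1},\nu_i]$ (where $e^{V(j^\dagger_i)-V(\nu_{i-1})}=M_i$) I would combine $\kappa_3(G_{j^\dagger_i})\ge 2\rho_{j^\dagger_i-1}^3$ with $1+E_\w[\tau_{j^\dagger_i-1}]=2+2W_{j^\dagger_i-2}\ge 2e^{V(j^\dagger_i-1)-V(\nu_{i-1})}=2M_i/\rho_{j^\dagger_i-1}$ (the last bound coming from the $l=\nu_{i-1}$ term of $W_{j^\dagger_i-2}$, with the degenerate case $j^\dagger_i=\nu_{i-1}+1$ handled directly) to get $\gamma_{i,\w}\ge c_{j^\dagger_i}\ge 16\,M_i^3$.

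Then, exactly as in the argument following Proposition \ref{prop:uppermain2} (pass to the subsequence $n=\nu_m$, using $\nu_m/m\to\bar\nu$, then from $P$ to $Q$ via $\tilde P\ll Q$ and the shift by $\nu_0$), it suffices to prove $\limsup_{m\to\infty}m^{-3/\k}\sum_{k=1}^m\gamma_{k,\w}=\infty$, $Q$-a.s. Under $Q$ the $M_k$ are i.i.d.\ with $Q(M_k>x)\sim C_0 x^{-\k}$ by \eqref{M}, so for each $\lambda>0$
\[
 \sum_{k\ge1}Q\bigl(16M_k^3>\lambda k^{3/\k}\bigr)=\sum_{k\ge1}Q\Bigl(M_k>(\lambda/16)^{1/3}k^{1/\k}\Bigr)=\infty ,
\]
and by the second Borel--Cantelli lemma $Q$-a.s.\ $16M_k^3>\lambda k^{3/\k}$ for infinitely many $k$; for any such $k$, since all $\gamma_{j,\w}\ge0$ and $\gamma_{k,\w}\ge 16M_k^3$, one has $\sum_{j=1}^k\gamma_{j,\w}\ge\gamma_{k,\w}>\lambda k^{3/\k}$. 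Hence $\limsup_m m^{-3/\k}\sum_{k=1}^m\gamma_{k,\w}\ge\lambda$ $Q$-a.s., and letting $\lambda\to\infty$ through the integers finishes the proof.

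The main work is the pair of estimates in the second paragraph: establishing the nonnegativity of the quenched third central moments and extracting the clean lower bound $\gamma_{i,\w}\ge 16 M_i^3$ from the recursion together with the crest geometry of a ladder interval. I expect this to be the only delicate point; once it is in hand the rest is routine, and in particular this route needs neither an exponential (``deep trap'') approximation for block crossing times nor an ellipticity assumption on the environment.
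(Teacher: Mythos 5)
Your argument is correct and arrives at the same conclusion as the paper, but the route to the key estimate is genuinely different. The paper works with raw third moments: it invokes the recursion of Lemma \ref{lem:thirdmoment} to show $E_\w[\tau_i^3]\ge 6E_\w[\tau_i]\Var_\w(\tau_i)$, plugs this into the expansion of the centered moment to get $E_\w[(\tau_i-E_\w[\tau_i])^3]\ge E_\w[\tau_i](3\Var_\w(\tau_i)-E_\w[\tau_i]^2)$, then uses the explicit formulas \eqref{QE}, \eqref{QV} to bound this by $16W_{i-1}^3$ at every site $i$, and finally sums $W_{i-1}^3\ge\Pi_{\nu_{j-1},i-1}^3$ over the block to pick up $16M_j^3$. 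You instead set up a recursion directly for the centered quantity $c_j$ via the geometric-sum decomposition $\tau_j\overset{d}{=}1+\sum_{l=1}^{G_j}(1+\tau_{j-1}^{(l)})$ and the third-cumulant identity $\kappa_3(S)=\kappa_3(G)\kappa_1(Y)^3+3\kappa_2(G)\kappa_1(Y)\kappa_2(Y)+\kappa_1(G)\kappa_3(Y)$ for compound sums. This buys two things cleanly: (i) unwinding the recursion displays $c_j$ as a convergent series of manifestly nonnegative terms, giving $c_j\ge 0$ with no side calculation (in the paper, nonnegativity of $c_i$ is implicit in the claimed bound $c_i\ge 16W_{i-1}^3$, which as written actually fails by a bounded amount when $W_{i-1}$ is small; the block inequality still holds because the error sums to $\bigo(\nu_n)=o(n^{3/\kappa})$, but your derivation avoids this wrinkle altogether); and (ii) rather than summing a pointwise bound over the whole block, you simply retain the single summand at the crest site $j^\dagger_i$, where the factor $\kappa_3(G_{j^\dagger_i})(1+E_\w[\tau_{j^\dagger_i-1}])^3$ already delivers $16M_i^3$ after the cancellation of $\rho_{j^\dagger_i-1}^3$. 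Both proofs then close with independence of the $M_j$ and second Borel--Cantelli; the only cosmetic difference there is that you first prove the statement under $Q$ and transfer to $P$ via $\tilde P\ll Q$ and the shift, whereas the paper exploits that the $M_j$ are already independent under $P$ and applies the Borel--Cantelli argument directly along $n_k=2^{2^k}$, skipping the $Q$-to-$P$ transfer. Your version is slightly longer at the transfer step but tighter on the block lower bound; both are valid.
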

 
\begin{proof}
We begin by showing the following simple lower bound for centered third moments. 
\begin{equation}\label{3mlb}
 E_\w[(\tau_i - E_\w[\tau_i])^3 ] \geq 16 W_{i-1}^3. 
\end{equation}
To see this, first we expand the centered third moment at a site $i$.
\begin{align}
E_\w\left[(\t_i-E[\t_i])^3\right]&=E_\w\left[\t_i^3\right]-3E_\w\left[\t_i^2\right]E[\t_i]+3E_\w\left[\t_i\right]E_\w\left[\t_i\right]^2-E_\w\left[\t_i\right]^3\nonumber\\
&=E_\w\left[\t_i^3\right]-3\Var(\t_i)E_\w\left[\t_i\right]-E_\w\left[\t_i\right]^3\label{centered-third-1}
\end{align}
It follows from the recursive equation for quenched third moments in Lemma \ref{lem:thirdmoment} that 
\[
 E_\w[\tau_i^3] \geq 6E_\w[\t_i]\Var_\w(\t_i).
\]
Using this in \eqref{centered-third-1}
we get 
\begin{equation}\label{lower:second-1}
E_\w\left[(\t_i-E[\t_i])^3\right]\geq  E_\w[\t_i]\left(3\Var_\w(\t_i)-E_\w[\t_i]^2 \right)
\end{equation}
Since it follows from \eqref{QV} that $\Var_\w(\tau_i) \geq 4 W_{i-1}^2$ and 
from \eqref{QE} that  $E_\w[\t_i] \geq 2W_{i-1}$, we have that \eqref{3mlb} follows. 

To prove \eqref{LSc3m}, it will be enough to show the limsup is infinite along the subsequence of ladder locations. Moreover, since $\nu_n/n \to \bar{\nu}$, $P$-a.s., it is enough to show that 
\begin{equation}\label{ll3msum}
 \limsup_{n\to\infty} \frac{1}{n^{3/\k}} \sum_{i=1}^{\nu_n} E_\w\left[ (\tau_i - E_\w[\tau_i])^3 \right] = \infty, \quad P\text{-a.s.}
\end{equation}
Recall the definition of $M_j$ in \eqref{Midef} as the exponential of the height of the potential between ladder locations $\nu_{j-1}$ and $\nu_j$. 
Then for any fixed $j$ it follows from the lower bound in \eqref{3mlb} that 
\begin{align}
 \sum_{i=\nu_{j-1}+1}^{\nu_j} E_\w\left[ (\tau_i - E_\w[\tau_i])^3 \right]
\geq 16 \sum_{i=\nu_{j-1}+1}^{\nu_j} W_{i-1}^3 
&\geq 16 \sum_{i=\nu_{j-1}+1}^{\nu_j} \Pi_{\nu_{j-1},i-1}^3 \nonumber \\
&\geq 16 \max_{\nu_{j-1} < i \leq \nu_j } \Pi_{\nu_{j-1},i-1}^3
= 16 M_j^3. \label{M3lb}
\end{align}
Therefore, we have that \eqref{ll3msum} will follow if we can show 
\begin{equation}\label{M3sum}
 \limsup_{n\to\infty} \frac{1}{n^{3/\k}} \sum_{j=1}^n M_j^3 = \infty. 
\end{equation}
We noted in \eqref{M} that under the measure $Q$ the random variables  $\{M_j\}_{j\geq 1}$ are i.i.d.\ with tails asymptotic to $C x^{-\k}$. 
Under the measure $P$, the sequence $\{M_j\}_{j\geq 1}$ is still independent and for $j\geq 2$ they all have the same distribution as under $Q$. 
Thus, for the sum in \eqref{M3sum} the terms in the summand are independent and all but the first have tail decay $P(M_j^3 > x) \sim C x^{-\k/3}$. 
From this it is standard to show that \eqref{M3sum} holds. 
First of all, we know that $n^{-3/\k} \sum_{j=1}^n M_j$ converges in distribution as $n\to\infty$ to a $(\k/3)$-stable random variable with support on $(0,\infty)$. Then letting $n_k = 2^{2^k}$ we have that for any $L<\infty$ the sequence of events $\{ \sum_{j=n_{k-1}+1}^{n_k} M_j^3 > L n_k^{3/\k} \}$ are independent with probability bounded away from zero uniformly in $k$. Thus, with probability one, infinitely many of these events hold and so
 $\limsup_{k\to\infty} n_k^{-3/\k} \sum_{j=n_{k-1}+1}^{n_k} M_j^3 \geq L$. 
Since this is true for any $L<\infty$ we have 
\[
 \limsup_{k\to\infty} \frac{1}{n_k^{3/\k}} \sum_{j=n_{k-1}+1}^{n_k} M_j^3 = \infty, 
\]
which implies \eqref{M3sum}. 
\end{proof}

\subsection{Lower bound when \texorpdfstring{$\k=3$}{kappa=3}}

We now prove the lower bound in Theorem \ref{th:k3exact}. That is, if $\k=3$ there exists a constant $\ell >0$ such that 
\begin{equation}\label{lbk3}
 \lim_{n\to\infty} P\left( \frac{\sqrt{n}}{\log n} \| F_{n,\w} - \Phi \|_\infty  \geq \ell \right) = 1.
\end{equation}
Again the proof follows many of the same steps as above for the case $\k \in (2,3)$. 

First of all, since Lemma \ref{lem:45small} holds for $\k=3$ then the terms in \eqref{lbother} are $o(\frac{\log n}{\sqrt{n}})$, $P$-a.s.
Therefore, \eqref{lbk3} will follow if we can show that
\begin{equation}\label{k3suff1}
 \lim_{n\to\infty} P\left( \sum_{k=1}^n E_\w[(\tau_k - E_\w[\tau_k])^3] \geq a n \log n \right)=1, \quad \text{for some } a>0.  
\end{equation}
Using \eqref{M3lb}, we have that 
\begin{align*}
 P\left( \sum_{k=1}^n E_\w[(\tau_k - E_\w[\tau_k])^3] \geq a n \log n \right)
&\geq P\left( \sum_{k=1}^{\nu_{\fl{cn}}} E_\w[(\tau_k - E_\w[\tau_k])^3] \geq a n \log n \right) - P(\nu_{\fl{cn}} > n) \\
&\geq P\left( 16 \sum_{j=1}^{\fl{cn}} M_j^3 \geq a n \log n \right) - P(\nu_{\fl{cn}} > n). 
\end{align*}
If $c< 1/\bar{\nu}$, then the last probability on the right vanishes as $n\to \infty$. On the other hand, since the random variables $M_j$ are i.i.d.\ with $P(M_j^3 > x) \sim C_0/x$, it follows from classical results that $\lim_{n\to\infty} \frac{1}{n\log n} \sum_{j=1}^{n} M_j^3 = C_0$, in $P$-probability. Therefore, by choosing $a$ small enough the first term on the right above tends to 1.


\appendix
\section{The Quenched third moment of a crossing time}\label{app:3m}
\begin{proof}[Proof of Lemma \ref{lem:thirdmoment}]
	For ease of notation we will introduce the following notation 
	\[e_i = E_\w[\t_i] \text{ and } v_i = \Var_\w(\t_i)\]
	for the quenched mean and variance of $\t_i$ which will be used throughout the proof.  We first derive a recursive equation of $E_\w[\t_1^3]$.  That is
	\begin{equation}\label{appendix:main}
	E_\w[\t_1^3]=\frac{1}{\w_{0}}+6e_1v_1+\r_{0} E_\w[\t_0^3].
	\end{equation}
	Then, we generalize our result to any $\t_i$ for $i\in\mathbb{Z}$, a time to cross from a site $i-1$ to $i$.  Starting at the origin, we can decompose the crossing time $\t_1$ as sums of hitting time to site 1 with the conditions that the first step is either the site 1 or the site -1.  Then, the hitting time to reach the right neighbor site 1 is decomposed to
	\begin{equation}\label{appendix1:t1}
	\t_1=\ind{X_1=1}+\ind{X_1=-1}(1+\t_0+\t_1')=1+\ind{X_1=-1}(\t_0+\t_1')
	\end{equation}
	where $\t'_1$ and $\t_1$ are same distribution but independent to each other.  From \eqref{appendix1:t1}, the quenched expectation of $\t_1$ is
	\[e_1=1+(1-\w_0)(e_0+e_1).
	\]
	Using the notation $\rho_0=(1-\w_0)/\w_0$ and solving for $\r_0 e_0$, the above equation becomes  
	\begin{equation}\label{appendix1:tau1}
	\r_0 e_0=e_1-\frac{1}{\w_0}=e_1-1-\r_0.
	\end{equation}
	Similarly, we obtain $\t_1^2$ from \eqref{appendix1:t1} by
	\begin{align}
	\t_1^2&=(1+\ind{X_1=-1}(\t_0+\t_1'))^2\nonumber\\
	&=1+2\ind{X_1=-1}(\t_0+\t_1')+\ind{X_1=-1}(\t_0+\t_1')^2\nonumber\\
	&=1+2\ind{X_1=-1}(\t_0+\t_1')+\ind{X_1=-1}(\t_0^2+2\t_0\t_1'+\t_1'^2)\nonumber\\
	&=1+\ind{X_1=-1}(2\t_0+2\t_1'+\t_0^2+2\t_0\t_1'+\t_1'^2)\nonumber
	\end{align}
	Since $\{t_i\}_{i\in\mathbb{Z}}$ are independent under the quenched law, taking quenched expectation to both sides of the last equality and solving for $E_\w[\t_1^2]$ yield
	\[
	E_\w[\t_1^2]=\frac{1}{\w_0}+2\left(\r_0 e_0+\r_0 e_1+\r_0 e_0e_1\right)+\rho_0 E_\w[\t_0^2].
	\]
	Then applying the equation in \eqref{appendix1:tau1}, we get
	\begin{align*}
	E_\w[\t_1^2]&=\frac{1}{\w_0}+2\left(e_1-\frac{1}{\w_0}+\r_0 e_1+\left(e_1-1-\r_0\right)e_1\right)+\rho_0 E_\w[\t_0^2]\\
	&=2e_1^2-\frac{1}{\w_0}+\rho_0 E_\w[\t_0^2]
	\end{align*}
	Then using $E_\w[\t_1^2]=v_1+e_1^2$ and solving for $\r_0 E_\w[\t_0^2]$ yields 
	\begin{equation}
	\r_0 E_\w[\t_0^2]=v_1-e_1^2+1+\r_0\label{tau2-1}.
	\end{equation}
	To get the third moment of $\t_1$, we again use the equation in \eqref{appendix1:t1} and expand $\t_1^3$.  That is 
	\begin{align}
	\t_1^3&=(1+\ind{X_1=-1}(\t_0+\t_1'))^3\nonumber\\
	&=1+3\ind{X_1=-1}(\t_0+\t_1')+3\ind{X_1=-1}(\t_0+\t_1')^2+\ind{X_1=-1}(\t_0+\t_1')^3\nonumber\\
	&=1+\ind{X_1=-1}(3\t_0+3\t_1'+3\t_0^2+6\t_0\t_1'+3\t_1'^2+\t_0^3+3\t_0^2\t_1'+3\t_0\t_1'^2+\t_1'^3).\nonumber
	\end{align}
	Then taking a quenched expectation to the last equality and solving for $E_\w[\t_1^3]$ yield
	\begin{align}
	E_\w[\t_1^3]&=\frac{1}{\w_0}+3\left\{\r_0e_0+\r_0e_1+\r_0E_\w[\t_0^2]+2\r_0e_0e_1+\r_0E_\w[\t_1^2]+\r_0E_\w[\t_0^2]e_1+\r_0e_0E_\w[\t_1^2]\right\}+\r_0 E_\w[\t_0^3]\nonumber\\
	&=\frac{1}{\w_0}+3\left\{\r_0e_0+\r_0E_\w[\t_0^2]+\left(\r_0+2\r_0e_0+\r_0E_\w[\t_0^2]\right)e_1+\left(\r_0+\r_0e_0\right)E_\w[\t_1^2]\right\}+\r_0 E_\w[\t_0^3]\label{appendix1:tau3}
	\end{align}
	Then replacing $\r_0 e_0$ and $\r_0 E_\w[\t_0^2]$ by \eqref{appendix1:tau1} and \eqref{tau2-1} and $E_\w[\t_1^2]=v_1+e_1^2$ simplify the second term of  \eqref{appendix1:tau3} as 
	\[e_1+v_1-e_1^2+(2e_1+v_1-e_1^2-1)e_1+(e_1-1)(v_1+e_1^2)=
	2e_1v_1\]
	and we get \eqref{appendix:main}.  In general for a crossing time on any site $i\in\mathbb{Z}$, we obtain the following recursive equation.
	\[E_\w[\t_i^3]=\frac{1}{\w_{i-1}}+6E_\w[\t_i]\Var_\w(\t_i)+\r_{i-1} E_\w[\t_{i-1}^3].\]
\end{proof}

\bibliographystyle{alpha}
\bibliography{RWREref}

\end{document}